\def\p#1{\left( #1 \right)}
\def\Z{\mathbb{Z}}
\def\Q{\mathbb{Q}}
\def\F{\mathbb{F}}
\def\Gal{\operatorname{Gal}}
\def\GL{\operatorname{GL}}
\def\det{\operatorname{det}}
\def\Frob{\operatorname{Frob}}
\def\Aut{\operatorname{Aut}}
\def\rad{\operatorname{rad}}
\def\kronecker#1#2{\p{\frac{#1}{#2}}}
\def\Z{\mathbb Z}
\def\Q{\mathbb Q}
\def\R{\mathbb R}
\def\C{\mathbb C}
\def\F{\mathbb F}
\def\fp{\mathfrak p}
\def\Z{\mathbb Z}
\def\Q{\mathbb Q}
\def\R{\mathbb R}
\def\C{\mathbb C}
\def\F{\mathbb F}
\def\fp{\mathfrak p}
\def\det{\operatorname{det}}
\def\End{\operatorname{End}}
\def\Hom{\operatorname{Hom}}
\def\Aut{\operatorname{Aut}}
\def\Gal{\operatorname{Gal}}
\def\Frob{\operatorname{Frob}}
\def\car{\operatorname{char}}
\def\exp{\operatorname{exp}}
\def\GL{\operatorname{GL}}
\def\GSp{\operatorname{GSp}}
\def\O{\operatorname{O}}
\def\log{\operatorname{log}}
\def\rad{\operatorname{rad}}
\def\zar{\operatorname{zar}}
\def\split{\operatorname{split}}
\def\p#1{\left( #1 \right)}
\def\kronecker#1#2{\p{\frac{#1}{#2}}}
\def\p#1{\left( #1 \right)}
\def\Z{\mathbb{Z}}
\def\Q{\mathbb{Q}}
\def\F{\mathbb{F}}
\def\conn{\operatorname{conn}}
\def\O{\operatorname{O}}
\def\Gal{\operatorname{Gal}}
\def\GL{\operatorname{GL}}
\def\GSp{\operatorname{GSp}}
\def\car{\operatorname{Char}}
\def\det{\operatorname{det}}
\def\Li{\operatorname{Li}}
\def\Frob{\operatorname{Frob}}
\def\Aut{\operatorname{Aut}}
\def\rad{\operatorname{rad}}
\newcommand{\ol}[1]{\overline{#1}}
\newcommand{\cal}[1]{\mathcal{#1}}
\theoremstyle{plain}
\newtheorem{theorem}{Theorem}
\newtheorem{corollary}[theorem]{Corollary}
\newtheorem{lemma}[theorem]{Lemma}
\newtheorem{proposition}[theorem]{Proposition}
\theoremstyle{definition}
\newtheorem{remark}{Remark}
\title{Distribution of supersingular primes for abelian surfaces}
\author[Wang]{Tian Wang}
\address{Department of Mathematics \&
Statistic, Concordia University, Montreal, Quebec, Canada}
\email{tian.wang@concordia.ca}
\subjclass[2010]{Primary 11G10, 11N45; Secondary 11G18, 11N36.}
\begin{document}

\begin{abstract}
Let $A/K$ be an absolutely simple abelian surface defined over a number field $K$. We give unconditional upper bounds for the number of prime ideals  $\mathfrak{p}$ of $K$ with norm up to $x$ such that $A$ has supersingular reduction at $\fp$. These bounds are obtained in three distinct settings, depending on the endomorphism algebra of $A$, namely, the case of trivial endomorphisms, real multiplication (RM), and quaternion multiplication (QM).   In the RM case and when $K=\Q$, our results further implies an  unconditional upper bound on the distribution of Frobenius traces of $A$.  Furthermore, in the RM setting, we study the distribution of the middle coefficients of Frobenius polynomials of $A$ at primes where the reduction of  $A$  splits.  
\end{abstract}

\maketitle

\section{Introduction}
Let $E/\Q$ be a non-CM elliptic curve. For a prime $p$ that does not divide the conductor $N_E$ of $E$, we denote by $\overline{E}_p$ the reduction of $E$ at $p$.  We say that $\overline{E}_p$ is  \emph{supersingular} if the $\ol{\F}_p$-points of the $p$-torsion group  $\overline{E}_p[p]$ of $\overline{E}_p$ is trivial and such a prime $p$ is  called a \emph{supersingular prime} of $E$. A famous open problem related to the distribution of supersingular primes for elliptic curves 
is the Lang-Trotter Conjecture \cite[p. 36]{LaTr1976}, which  implies that  for all sufficiently large real number $x$, 
\begin{equation}\label{L-T-elliptic}
\pi_{E, ss}(x) := \#\{p\leq x: p\nmid N_E, \overline{E}_p \text{ is supersingular}\} \sim C_E\frac{x^{\frac{1}{2}}}{\log x}
\end{equation}
for some constant $C_E\neq 0$ that depends only on $E$.  

The first nontrivial result related to this conjecture is due to Serre  \cite[I-25]{Se1968},   who showed that the set of supersingular primes of a non-CM elliptic curve has density zero \cite[I–25]{Se1968}. 
Later in \cite[Th\'{e}or\`{e}me 12, p. 357]{Se1981}, he obtained  the unconditional upper bound  $\pi_{E, ss}(x)\ll x(\log x)^{-\frac{3}{2}+\epsilon}$ for any  $\epsilon>0$, using the  effective Chebotarev Density Theorem \cite{LaMoOd1979} together with  properties of  $\ell$-adic Lie groups. Incorporating  a sieve theoretical lemma, the upper bound was subsequently improved by Wan \cite{Wa1990}  to
$\pi_{E, ss}(x)\ll x(\log x)^{-2+\epsilon}$ for any $\epsilon>0$. In 1986,  Elkies \cite{El1987} made a significant progress in this direction by showing that there are infinitely many supersingular primes for elliptic curves over $\Q$. 
Based on this work, Murty and Elkies  \cite{El1991} also proved the unconditional upper bound $\pi_{E, ss}(x)\ll x^{\frac{3}{4}}$. It is worth mentioning that this bound matches with Serre's result under the Generalized Riemann Hypothesis (GRH) for Dedekind zeta functions \cite[p. 323]{Se1981}. 

In contrast, if  $E/\Q$ 
has complex multiplication, the distribution of supersingular primes is well understood. In this case, Deuring’s criterion \cite{De1941} gives $\pi_{E, ss}(x)\sim x(2\log x)^{-1}$.

One of the main goals of this paper is to give  unconditional upper bounds for the number of supersingular primes of absolutely simple abelian surfaces. We begin by recalling some basic definitions and setting up the notation.
 Let $q$ be a prime power and  $B$ an abelian variety over a finite field $\F_q$. We say $B$ is a \emph{supersingular} abelian variety if it  is isogenous over $\ol{\F}_q$ to a product of supersingular elliptic curves (see \cite[Theorem (4.2)]{Oo1974}). Now  let  $A$ be an absolutely simple abelian surface defined over a number field $K$. For a prime $\fp$ that does not divide the conductor $N_A$ of $A$, we denote by  $\ol{A}_\fp$  the reduction of $A$ at $\fp$. This is an abelian variety defined over the  residual field $\F_\fp$.  
We call $\fp$  a {\emph{supersingular prime}} of $A$ if $\ol{A}_\fp$ is supersingular. 

 It is well-known that for an abelian surface $A/K$, the density of supersingular primes is zero upon extending the base field (see, e.g., \cite[Proposition 5.1, p. 61]{BaGo1997} or \cite[Corollary 2.9, p. 372]{Og1982}). However, much less is known about  the upper bounds of the associated counting function:
\begin{equation}\label{ss-surface}
\pi_{A, ss}(x) := \#\{\fp\in \Sigma_K: N(\fp)\leq x, \fp\nmid N_A, \ol{A}_\fp \text{ is supersingular}\},
\end{equation}
Even the conjectural asymptotics of $\pi_{A, ss}(x)$ are only known for special abelian surfaces, which we now introduce.

In analogy with the Lang–Trotter Conjecture, Bayer and Gonz\'{a}lez \cite[Conjecture 8.2]{BaGo1997} predicted the distribution of $p$-ranks of modular abelian varieties over $\Q$ that implies the asymptotic behavior of $\pi_{A, ss}(x)$ for modular abelian surfaces. 
More precisely, let $f=\sum_{n\geq 1} a_n q^n$,  where $q=\exp(2\pi i z)$, be a non-CM newform of weight 2, level $N$, and Nebentypus character $\epsilon$. Let $A_f/\Q$ be the abelian variety attached to $f$ via the Eichler–Shimura theory. Let $K_f=\Q\left(\{a_n\}_{(n, N)=1}\right)$ be the coefficient field of $f$ and let $F_f=\Q\left(\{a_n^2/\epsilon(n)\}_{(n, N)=1}\right)$. We assume $A_f$ is absolutely simple. \footnote{If $A_f$ is not absolutely simple, then it is necessarily isogenous to a power of some simple abelian  variety.}  If
\begin{equation}\label{GL2-type}
[K_f:\Q]=2 \text{ and } [F_f:\Q]=1,
\end{equation}
 then the conjecture predicts that 
$\pi_{A_f, ss}(x)\sim C_{f} x^{\frac{1}{2}}(\log x)^{-1}$; 
 if
\begin{equation}\label{GL2-type-2}
[K_f:\Q] =[F_f:\Q]=2,
\end{equation}
then it is expected that $\pi_{A_f, ss}(x)\sim C_{f} \log \log x$, where $C_{f}$ is an absolute  constant depending only on $f$. \footnote{Too see this, we also need to  use   \Cref{def-ss-surface} (3) to relate the $p$-rank of $\ol{A}_p$ and the supersingular properties of $\ol{A}_p$.} 

Though no general asymptotic formula is known for  $\pi_{A, ss}(x)$,  results concerning the distribution of Frobenius traces of $A$ provide useful upper bounds. In the case $K=\Q$, a conjecture of Cojocaru, Davis, Silverberg, and Stange \cite[p. 3562]{CoDaSiSt2017} predicts that there exists a constant  $C(A)$ depending only on $A$ such that  $\pi_{A, ss}(x)\ll C(A)x^{\frac{1}{2}}(\log x)^{-1}$ holds  \footnote{To derive this upper bound, we also use the fact that if $p\geq 17$,  then supersingularity of the reduction $\ol{A}_p$  implies   $a_p(A)=0$  (see \Cref{def-ss-surface}). }  for a family of abelian surfaces that  satisfy   the equidistribution assumption and that the image of the adelic Galois representation  of $A$ is open in 
$\GSp_{4}(\widehat{\Z})$.  Under certain assumptions, some  upper bounds have been established.
For example,  under GRH, 
we have \cite[Theorem 1]{CoWa2022}
$
\pi_{A, ss}(x) \ll x^{\frac{10}{11}}(\log x)^{-\frac{9}{11}}
$; assuming both GRH and Artin's Holomorphy Conjecture (AHC), we have \cite[Theorem 20, p. 629]{Be2016}
$
\pi_{A, ss}(x) \ll x^{\frac{9}{10}}(\log x)^{-\frac{3}{5}}
$; assuming  GRH, AHC, and a Pair Correlation Conjecture (PCC), we have \cite[Theorem 2]{CoWa2022}  $
\pi_{A, ss}(x) \ll x^{\frac{2}{3}}(\log x)^{\frac{1}{3}}
$. These results, while conditional, show that the set of supersingular primes of such abelian surfaces has density 0 over $\Q$. 

In this paper, we provide unconditional upper bounds for the counting function  
(\ref{ss-surface})
for various abelian surfaces $A/K$, classified by their endomorphism algebras. Similar to the case of elliptic curves,  the asymptotic behavior of  $\pi_{A, ss}(x)$ is expected to depend on the structure of the endomorphism ring of $A$.   As a preliminary step, we recall that, according to Albert’s classification of division algebras with positive involutions, the endomorphism algebra  $D := \End_{\ol{K}}(A)\otimes_{\Z} \Q$ of $A$ must be one of 
\begin{enumerate}
    \item $\Q$;
    \item a real quadratic field, in which case we say $A$ has \emph{real multiplication (by $D$)};
    \item an indefinite quaternion algebra over $\Q$,  in which case we say $A$ has \emph{quaternion multiplication (by $D$)};
    \item a CM quartic field, in which case we say $A$ has \emph{complex multiplication (by $D$)}.
\end{enumerate}

Our main result is the following.
\begin{theorem}\label{main-thm-1}
Let $A/K$ be an absolutely simple abelian surface. For all sufficiently large $x$, we have
\begin{enumerate}
    \item if $D=\Q$, then
     \[
   \pi_{A, ss}(x) \ll  
\frac{x (\log \log x)^{\frac{3}{2}}}{(\log x)^{\frac{3}{2}}};
   \] \label{ss-generic}
   \item if $\End_{\ol{K}}(A)=\End_{K}(A)$ and 
   $A$ has real multiplication, 
   then   
     \[
  \pi_{A, ss}(x) \ll \frac{x (\log \log x)^{2}}{(\log x)^{{2}}};
   \] \label{ss-RM}
      \item if $\End_{\ol{K}}(A)=\End_{K}(A)$  and 
      $A$ has quaternion multiplication,  then  
     \[
  \pi_{A, ss}(x) \ll \frac{x (\log \log x)^{2}}{(\log x)^{{2}}}.
   \] \label{ss-QM}
\end{enumerate}
Moreover, the implicit constant in each $\ll$ depends  on  $A$ and $K$.
\end{theorem}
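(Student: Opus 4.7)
The plan is to translate the supersingularity of $\ol{A}_\fp$ into an arithmetic constraint on the Frobenius $\Frob_\fp$ that can be detected on a mod-$\ell$ Galois representation, then apply an effective Chebotarev bound for each auxiliary prime $\ell$ and optimize over $\ell$ (using a sieve in cases (2) and (3)). By Lemma \ref{def-ss-surface}, for $N(\fp)$ sufficiently large, supersingular reduction forces $a_\fp(A) = 0$ together with appropriate constraints on the second Frobenius coefficient, all visible on $\ol\rho_\ell: G_K \to \Aut(A[\ell])$. Thus it suffices to upper-bound the number of primes $\fp$ for which $\Frob_\fp$ lies in the trace-zero locus of $\ol\rho_\ell(G_K)$.

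For case (1), since $\End_{\ol K}(A) = \Z$, an effective open image theorem for $\GSp_4$ (extending Serre's result for elliptic curves to abelian surfaces of $\GSp_4$-type) ensures that $\ol\rho_\ell(G_K) \supseteq \Sp_4(\F_\ell)$ for all $\ell$ outside an effectively bounded finite set. Elements of trace zero in $\GSp_4(\F_\ell)$ form a subset of relative density $O(1/\ell)$ by a standard character-count. Applying Serre's effective form of the Chebotarev density theorem to $K(A[\ell])/K$, whose conductor/discriminant is controlled polynomially in $\ell$, yields a bound of the shape $x/(\ell\log x) + \ell^{O(1)} x \exp(-c\sqrt{\log x})$, and balancing the two contributions with $\ell \sim (\log x)^{1/2}(\log\log x)^{-1/2}$ produces the bound $x(\log\log x)^{3/2}/(\log x)^{3/2}$.

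For cases (2) and (3), the hypothesis $\End_K(A) = \End_{\ol K}(A)$ implies that the Galois action commutes with the RM (resp.\ QM) structure, so $T_\ell(A)$ is a module over $\End_K(A)\otimes\Z_\ell$. Under RM with $D$ a real quadratic field, this decomposes $\ol\rho_\ell$ as a $2$-dimensional representation over $O_D\otimes \F_\ell$, which is $\F_\ell\times\F_\ell$ or $\F_{\ell^2}$ depending on the splitting of $\ell$ in $O_D$. Under QM with $D$ an indefinite quaternion algebra, Morita equivalence at an unramified split $\ell$ similarly extracts a $\GL_2$-valued representation carrying the same trace data. In both cases the supersingularity condition requires the trace in the enlarged algebra to vanish, a condition of relative density $O(1/\ell^2)$, and effective Chebotarev gives the per-$\ell$ estimate $x/(\ell^2\log x) + \ell^{O(1)} x\exp(-c\sqrt{\log x})$. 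Combining this with a Wan-type inclusion-exclusion sieve aggregating contributions over a dyadic range of auxiliary primes $\ell$ and optimizing produces the bound $x(\log\log x)^{2}/(\log x)^{2}$.

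The main obstacles are twofold. First, one needs a quantitatively explicit open image theorem for the mod-$\ell$ representation (polynomial control of the conductor of $K(A[\ell])/K$ and effectively bounded exceptional primes), especially in case (1) where this invokes the nonelementary theory for abelian surfaces; in cases (2) and (3) one must also verify that the $\GL_2$-type image essentially contains $\SL_2$ of the relevant ring for almost all $\ell$. Second, executing the Wan-type sieve to extract a $\log\log x$ saving instead of a weaker $(\log x)^{\epsilon}$ factor is delicate: it requires a uniform Brun-Titchmarsh-style estimate tailored to the $\GL_2$-over-$O_D$ Chebotarev problem. These technical inputs, rather than any single conceptual step, constitute the hardest part of the argument.
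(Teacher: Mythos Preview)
Your outline has the right high-level architecture (mod-$\ell$ Chebotarev plus a sieve), but it misses the crucial mechanism that makes the argument go through, and in case~(1) the claimed estimate is not available.

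The unconditional effective Chebotarev bound you invoke for $K(A[\ell])/K$, with error $\ell^{O(1)}x\exp(-c\sqrt{\log x})$, does not exist in the range you need: for the full extension with group $\GSp_4(\F_\ell)$ (degree $\asymp\ell^{10}$) the Lagarias--Odlyzko range forces $\ell$ to be far smaller than $(\log x)^{1/2}$, and direct application gives at best $x/(\log x)^{9/8-\epsilon}$ (cf.\ the remark after Theorem~\ref{main-thm-1}). The paper does \emph{not} work in the full extension. Instead, Lemma~\ref{ss-Weil-poly} pins down the characteristic polynomial $P_{A,\fp}(X)$ to one of five explicit forms, and Lemma~\ref{split-lem} shows that for suitably chosen $\ell$ the polynomial splits into linear factors over $\F_\ell$ precisely when the Legendre symbol $\bigl(\tfrac{N(\fp)}{\ell}\bigr)\neq -1$. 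This is the linchpin: the primes with $\bigl(\tfrac{p}{\ell}\bigr)=-1$ for all $\ell$ in a set $\cal{P}$ are handled by Brun--Titchmarsh (Lemma~\ref{uncond-lem}), while for the remaining primes the Frobenius is conjugate into the Borel $GB(\ell)$, so one passes to the \emph{abelian} quotient $GB(\ell)/GU'(\ell)$, of size $\asymp\ell^2$, where Thorner--Zaman (Theorem~\ref{uncond-ECDT}) applies with the much milder restriction $\log x\gg \ell^4\log(\ell N_A d_K x)$. Your plan never reaches an abelian extension, so the Chebotarev input you need is unavailable; and in case~(1) you omit the sieve altogether, which is not optional here.

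Two further points. In case~(3) the mod-$\ell$ image is $\GL_2(\F_\ell)$ and the supersingular condition reduces to trace zero for a \emph{single} $2\times 2$ matrix, hence density $O(1/\ell)$ in the Borel quotient $\cal{B}(\ell)/\cal{U}'(\ell)$ (Proposition~\ref{conj-size-QM}), not $O(1/\ell^2)$; the final exponent matches case~(2) only because the abelian quotient is smaller, allowing $\ell\sim \log x/\log\log x$. And throughout, the relevant condition is not merely ``trace zero'' but the full constraint on $(a_{1,\fp},a_{2,\fp})$ from Lemma~\ref{ss-Weil-poly}; this is what gives $|\ol{G\cal{C}}^i_B(\ell)|\ll 1$ (Proposition~\ref{gsp-conj-size}) rather than $\ll\ell$.
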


\begin{remark}
In Case (1), our bound improves  the unconditional estimate given in \cite[Theorem 1]{CoDaSiSt2017}, which gives 
$\pi_{A, ss}(x) \ll_\epsilon \frac{x}{(\log x)^{\frac{9}{8}-\epsilon}}$ for any $\epsilon>0$. 

In Case (2)  and if $K=\Q$, the abelian surface $A$ is associated to a  modular form via  Eichler-Shimura theory. This is because  $A$ is a $\GL_2$-type abelian variety over $\Q$,  hence is modular \cite{Ri1992}.   
Moreover,  the counting function $\pi_{A, ss}(x)$ also informs the distribution of Frobenius traces of $A$ and is relevant to Lang–Trotter-type conjecture for abelian surfaces (see \Cref{rm-final}). 

Case (3) of \Cref{main-thm-1} does not occur if $K=\Q$  \cite[p. 618, Proposition 1.3]{DiRo2005}.

These findings unconditionally establish that the set of supersingular primes of an absolutely simple abelian surface has density 0 over the base field. 
\end{remark}

\begin{remark}\label{Th-Za}
The upper bounds in \Cref{main-thm-1} (2) and (3) bear similarity to the unconditional result of Thorner and Zaman \cite{ThZa2018} for the Lang–Trotter conjecture in the case of non-CM elliptic curves, namely,  
\[
\#\{p\leq x: p \nmid N_E,  a_p(E)=t\}\ll \frac{x(\log\log x)^2}{(\log x)^2},
\]
where $E/\Q$ is a non-CM elliptic curve, $N_E$ is the conductor of $E$, $a_p(E):=p+1-|\ol{E}_p(\F_p)|$ is the Frobenius trace of $E$ at $p$, and $t$ is an integer.  

However, extending such bounds to higher-dimensional abelian varieties introduces new difficulties. In addition to handling more cases of Galois representations and incorporating suitable sieve techniques, a key part of the proof is to establish  results such as \Cref{conj-prop-QM}, \Cref{gsp-conj-prop},  \Cref{conj-prop}, and \Cref{split-lem}, which are relatively straightforward for elliptic curves but more subtle in higher dimensions. These results are essential for applying the effective Chebotarev Density Theorem (\Cref{func-ECDT}) and the sieve method.
\end{remark}

We now outline the proof of  \Cref{main-thm-1}. First, we split the counting function $\pi_{A, ss}(x)$ into 
\begin{equation}\label{ss-split-counting}
\#\{\fp\in \Sigma_K: N(\fp)\leq x, \fp\nmid N_A, \ol{A}_\fp \text{  splits and is supersingular}\}
\end{equation}
and
\begin{equation}\label{ss-simple-counting}
\#\{\fp\in \Sigma_K: N(\fp)\leq x, \fp\nmid N_A, \ol{A}_\fp \text{ is simple and  supersingular}\}
\end{equation}
and give an upper bound for each.
Here, $\ol{A}_\fp$ splits means  it is isogenous over $\mathbb{F}_\fp$ to a product of two elliptic curves defined  over $\F_\fp$. 
Then, we use \Cref{ss-Weil-poly} 
to reduce (\ref{ss-split-counting}) and (\ref{ss-simple-counting}) into prime distribution problems  for the characteristic polynomials of Frobenius  of $\overline{A}_\fp$ (also called  the Frobenius polynomials of $A$ at $\fp$) satisfying certain algebraic conditions (see  \Cref{ss-Weil-poly}). For the analytic inputs, we first adapt the inclusion-exclusion argument by Wan \cite[Lemma 4.1, p. 263]{Wa1990} and use the the Brun–Titchmarsh theorem to obtain a useful lemma (see  \Cref{uncond-lem}). This enables us to give an upper bound by sums over auxiliary  primes $\ell$ in short intervals for new counting functions  (see (\ref{S-i-max-trivial}), (\ref{S-i-max-RM}), and (\ref{S-i-max-QM})). 
 We then apply effective versions of the Chebotarev Density Theorem (see  \Cref{ECDT}) to obtain optimal upper bounds for the counting functions under consideration. This requires careful selection of number fields and conjugacy classes.  To perform the explicit computations, we also need to study the group-theoretic properties for the images of the Galois representations of $A$. Related results are given in \Cref{S2-Galois-rep} and    \Cref{sec: matrix}.   

We now turn to a related counting problem concerning the distribution of the middle coefficients of the Frobenius polynomials $P_{A,\fp}(X)$, under the assumption that $A$ admits RM and  $\overline{A}_\fp$ splits.

Assume that the endomorphism algebra of $A$, denoted $D = \End_{\overline{K}}(A) \otimes_{\mathbb{Z}} \mathbb{Q}$, is a real quadratic field. For a prime $\fp \nmid N_A$, the reduction $\overline{A}_\fp$ is defined over the finite field $\mathbb{F}_\fp = \mathbb{F}_{p^k}$ for some integer $k \geq 1$. The Frobenius polynomial of $A$ at $\fp$ is then of the form
\[
P_{A, \fp}(X)=X^4+a_{1, \fp}X^3+a_{2, \fp}X^2+p^ka_{1, \fp}X+p^{2k}.
\]
We consider
\begin{equation}\label{thm-2-function}
\pi_{A, \split, g(\cdot)}(x) := \#\{\fp\in \Sigma_K: N(\fp)=p^k\leq x,  \fp \nmid N_A,  \ol{A}_\fp \text{ splits},  a_{2, \fp}=g(p)\},
\end{equation}
where $g(\cdot): \mathbb{N} \to \R$ is any  function on the set  of all natural numbers $\mathbb{N}$, satisfying
$|g(p)|\leq 6p$ for all rational primes  $p$. The subsequent result establishes unconditional upper bounds for  $\pi_{A, \split, g(\cdot)}(x)$.

\begin{theorem}\label{a2p-split}
  Let $A/K$ be an absolutely simple abelian surface such that $\End_{K}(A)= \End_{\ol{K}}(A)$ and assume
  $A$  has real multiplication by a real quadratic field $F$. Let $g(\cdot): \mathbb{N} \to \R$ be an arithmetic function  that satisfies
$|g(p)|\leq 6p$ for all rational primes $p$. The following statements hold. 
  \begin{enumerate}
      \item\label{(1)} If for any integer $m$, we have $G(m):=\#\{p: g(p)=2p+ m\}=O(1)$, i.e., the  function $G(m)$ is uniformly bounded, then for all sufficiently large $x$, 
          \[
    \pi_{A, \split,  g(\cdot)}(x) \ll x^{\frac{1}{2}}.
    \]
      \item\label{(2)} If $K=\Q$ and $g(p)=2p+m_0$ for a fixed integer $m_0$ (in particular, the assumption in (\ref{(1)}) is not satisfied), then for all sufficiently large $x$, 
        \[
    \pi_{A, \split,  g(\cdot)}(x) \ll \frac{x (\log\log x)^2}{(\log x)^2}.
    \]
  \end{enumerate}
  In both cases, the implicit constants in $\ll$  depend on $A$, $K$, and $g(\cdot)$. 
\end{theorem}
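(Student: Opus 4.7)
The plan is to translate the joint conditions ``$\ol A_\fp$ splits and $a_{2,\fp}=g(p)$'' into a single Diophantine equation for an $F$-valued trace $t_\fp$, and then handle the two parts by very different arguments. Since $\End_K(A)=\End_{\ol K}(A)\cong \mathcal O_F$, the action of $\mathcal O_F$ on the $\ell$-adic Tate module shows that for every good prime $\fp$ the characteristic polynomial of Frobenius factors over $F$ as
\[
P_{A,\fp}(X) = Q_{A,\fp}(X)\cdot \sigma(Q_{A,\fp})(X),\qquad Q_{A,\fp}(X) = X^2 - t_\fp X + N(\fp),
\]
with $t_\fp\in\mathcal O_F$ and $\sigma$ the non-trivial element of $\Gal(F/\Q)$. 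Expanding yields $a_{2,\fp} = N_{F/\Q}(t_\fp) + 2N(\fp)$. When $\ol A_\fp$ splits over $\F_\fp$, $P_{A,\fp}(X)$ decomposes as a product of two degree-two polynomials in $\Z[X]$ each with constant term $N(\fp)$ (the Frobenius polynomials of the elliptic factors); comparing this with $Q_{A,\fp}\cdot\sigma(Q_{A,\fp})$ forces $\{t_\fp,\sigma(t_\fp)\}\subset\Z$, hence $t_\fp\in\Z$. Combined with $a_{2,\fp}=g(p)$, the counting problem becomes
\[
t_\fp^{\,2} \;=\; g(p)-2p^k,\qquad t_\fp\in\Z.
\]

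For part (\ref{(1)}), the Weil inequality $|t_\fp|\le 2\sqrt{N(\fp)}\le 2\sqrt x$ together with $|g(p)|\le 6p$ yields $0\le t_\fp^{\,2} = g(p)-2p^k\le 6p-2p^k$, forcing $p^{k-1}\le 3$ and hence $k=1$ apart from at most $O(1)$ exceptional pairs $(p,k)$. I would then re-parameterise the contributing $\fp$ by the integer $s=t_\fp$ with $|s|\le 2\sqrt x$: for each such $s$ the primes $p$ contributing must satisfy $g(p)=2p+s^2$, and by hypothesis at most $O_g(1)$ such $p$ exist. Since each rational prime $p$ supports at most $[K:\Q]$ primes $\fp$ of $K$, summing over $|s|\le 2\sqrt x$ gives $\pi_{A,\split,g(\cdot)}(x)\ll x^{1/2}$.

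For part (\ref{(2)}) we have $K=\Q$ and $t_p^{\,2}=m_0$. Unless $m_0=s_0^{\,2}$ for some $s_0\in\Z_{\ge 0}$ the counting function is identically zero, so suppose $m_0=s_0^{\,2}$ and reduce to bounding
\[
\#\{p\le x:\ t_p=\pm s_0\},
\]
a Lang--Trotter type problem for the $F$-trace of the $\GL_2$-type surface $A$. Since $\End_\Q(A)=\mathcal O_F$, Ribet's open-image theorem guarantees that the mod-$\lambda$ Galois representation $\rho_{A,\lambda}\colon G_\Q\to \GL_2(\mathcal O_F/\lambda)$ contains $\SL_2(\mathcal O_F/\lambda)$ in its image for all but finitely many primes $\lambda$ of $F$. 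The condition $t_p=\pm s_0$ imposes $\tr\rho_{A,\lambda}(\Frob_p)\equiv \pm s_0\pmod\lambda$, a Chebotarev condition cutting out conjugacy classes of relative density $O(1/(\mathrm{N}\lambda)^2)$ in $\Im(\rho_{A,\lambda})$. Combining Lemma~\ref{uncond-lem} with the effective Chebotarev density theorem of Thorner--Zaman~\cite{ThZa2018}, applied to the tower $\Q(A[\lambda])/\Q$ with $\ell$ ranging over a suitably optimised interval, should then yield the bound $x(\log\log x)^2/(\log x)^2$, paralleling the elliptic-curve argument recalled in Remark~\ref{Th-Za}.

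The chief obstacle is implementing the Thorner--Zaman strategy in case (\ref{(2)}) uniformly in the $F_\lambda$-setting: one must verify the open-image hypothesis for the family $\{\rho_{A,\lambda}\}_\lambda$, bound the trace-fixed conjugacy class count inside $\Im(\rho_{A,\lambda})$, and arrange the log-free zero-density input for the division fields $\Q(A[\lambda])$ so the optimisation carries through without loss. Part (\ref{(1)}), by contrast, is essentially elementary once the reduction $t_\fp\in\Z$ is in place, the whole bound following from the Weil inequality together with the hypothesis on $g$.
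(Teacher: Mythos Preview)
Your approach is essentially the same as the paper's. For part~(\ref{(1)}), both you and the paper factor $P_{A,\fp}(X)$ over $F$ (Lemma~\ref{RM-poly}), observe that splitting forces the $F$-trace to lie in $\Z$ (the paper obtains this via Lemma~\ref{main-lem-RM}, which gives the stronger conclusion $P_{A,\fp}=(X^2+bX+p)^2$; your unique-factorisation argument reaches the same endpoint), and then sum over the $O(\sqrt x)$ possible integer values of $t_\fp$ using the hypothesis on $g$. For part~(\ref{(2)}), both reduce to a Lang--Trotter count $\#\{p\le x: t_p=a\}$ for a fixed integer $a$, pass to the mod-$\lambda$ representation with image $\GL_2(\F_\ell)$ via Ribet, and then run the Thorner--Zaman argument combined with Wan's sieve.

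One correction in your outline of part~(\ref{(2)}): the condition $\tr\rho_{A,\lambda}(\Frob_p)\equiv \pm s_0\pmod\lambda$ cuts out a subset of relative density $O(1/\mathrm N\lambda)$ in $\GL_2(\F_\ell)$, not $O(1/(\mathrm N\lambda)^2)$; after passing to the Borel and quotienting by $\cal U'(\ell)$ the image in $\cal B(\ell)/\cal U'(\ell)\cong\F_\ell^\times$ has size $O(1)$, so Chebotarev alone yields only one factor of $1/\ell$. The second saving of $\log x/\log\log x$ comes from the sieve step (the paper cites \cite[Corollary~4.2]{Wa1990} rather than Lemma~\ref{uncond-lem}, but the mechanism is the same). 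With that adjustment your plan matches the paper's proof.
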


The proof of part (1) relies only on properties of the Frobenius polynomial  $P_{A, \fp}(X)$, which are discussed in \Cref{S2.3}.  The second part relies on the modularity of $A$, and its proof makes essential use of the result of Thorner and Zaman \cite[Theorem 1.4]{ThZa2018} on newforms of weight 2.

This result may have broad applications. For example, if $g(p)$ is a polynomial in $p$, then part (1) applies. In particular, one may take
 $g(p)$ to be constant. The power saving in $x$ allows us to derive the following  upper bound for the number of primes for which $a_{2, \fp}$ lies in a short interval.

\begin{corollary}\label{a2p-interval}
 Let $A/K$ be an absolutely simple abelian surface such that $\End_{K}(A)= \End_{\ol{K}}(A)$ and 
  $A$  has real multiplication.  Let $x>0$,  $\delta>0$, and $I \subset [-6x, 6x]$ be an interval satisfying $ |I|\leq x^{\frac{1}{2}}(\log x)^{-(1+\delta)}$.  Then  for all sufficiently large  $x$,
    \[
   \#\{\fp\in \Sigma_K:  N(\mathfrak{p}) \leq x, \fp \nmid N_A, \overline{A}_{\mathfrak{p}} \text{ splits}, a_{2, \fp}\in I\} \ll  \frac{x}{(\log x)^{1+\delta}},
    \] 
    where the implicit constant in $\ll$ depends on $A, K,$ and  $\delta$. 
\end{corollary}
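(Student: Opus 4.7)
The plan is to reduce to Theorem \ref{a2p-split}(\ref{(1)}) by dissecting $I$ into its integer points. Since $a_{2,\fp}$ is always an integer, every prime $\fp$ being counted satisfies $a_{2,\fp}=m$ for some $m \in I \cap \Z$, and there are at most $|I|+1$ such integers. I would also separate off the primes of residue degree at least $2$: their total number up to norm $x$ is $O(x^{1/2})$, which is already within the target bound.

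For each $m \in I \cap \Z$, define an arithmetic function $g_m: \mathbb{N} \to \mathbb{R}$ by setting $g_m(p)=m$ when $|m|\leq 6p$ and $g_m(p)=0$ otherwise, so that $|g_m(p)|\leq 6p$ for every rational prime $p$. A direct check shows that for any integer $m'$ the equation $g_m(p)=2p+m'$ has at most two solutions in $p$ (namely $p=(m-m')/2$ from the first branch and $p=-m'/2$ from the second), uniformly in $m$ and $m'$; hence the hypothesis of Theorem \ref{a2p-split}(\ref{(1)}) holds for $g_m$ with a bound independent of $m$. Moreover, for every degree $1$ prime $\fp$ above $p$ with $a_{2,\fp}=m$, the Weil bound $|a_{2,\fp}|\leq 6p$ forces $|m|\leq 6p$, so that $g_m(p)=m=a_{2,\fp}$ and such $\fp$ is counted by $\pi_{A,\split,g_m(\cdot)}(x)$.

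Applying Theorem \ref{a2p-split}(\ref{(1)}) to each $g_m$ and summing over $m\in I\cap\Z$ then gives
\[
\#\{\fp\in\Sigma_K: N(\fp)\leq x,\, \fp\nmid N_A,\, \ol{A}_\fp \text{ splits},\, a_{2,\fp}\in I\}
\leq \sum_{m\in I\cap \Z}\pi_{A,\split,g_m(\cdot)}(x) + O\p{x^{1/2}} \ll (|I|+1)\,x^{1/2},
\]
and since $|I|\leq x^{1/2}(\log x)^{-(1+\epsilon)}$, the right-hand side is $\ll x/(\log x)^{1+\epsilon}$ for all sufficiently large $x$.

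The main technical obstacle I foresee is verifying that the implicit constant in Theorem \ref{a2p-split}(\ref{(1)}) is uniform in $m$, i.e.\ that it depends on $g_m(\cdot)$ only through the uniform bound $2$ appearing in the hypothesis (together with $A$ and $K$), and not through the magnitude of $m$. Given the proof strategy indicated earlier in the paper -- effective Chebotarev applied to Galois extensions cut out by the residual mod-$\ell$ representations of $A$, combined with the Wan-style inclusion-exclusion in Lemma \ref{uncond-lem} -- this uniformity is natural, since the relevant fields and their discriminants are determined by $A$ and $K$ alone, while $g$ enters the argument only in selecting which residue classes of the middle Frobenius coefficient are retained by the sieve. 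Confirming this requires inspecting how the constant propagates through the sieve step, but it should not introduce any new dependence on $m$.
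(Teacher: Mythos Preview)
Your argument is correct and matches the paper's proof almost exactly: the paper also splits over integers $t\in I\cap\Z$, applies Theorem~\ref{a2p-split}(\ref{(1)}) with the constant function $g(p)=t$, and bounds the sum by $|I\cap\Z|\cdot x^{1/2}$. Your uniformity concern is well taken but is resolved by a simpler route than you suggest: the proof of Theorem~\ref{a2p-split}(\ref{(1)}) does not use Chebotarev at all but is purely combinatorial (it reduces via Lemma~\ref{main-lem-RM} to counting, for each $|m|\le 2\sqrt{x}$, the primes $p$ with $g(p)=2p+m^2$), so the implicit constant depends on $g$ only through the uniform bound in the hypothesis, which here is absolute.
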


\begin{remark}
We provide conjectural asymptotics of  counting function  $ \pi_{A, \split,  g(\cdot)}(x)$ in  special cases by a probabilistic argument similar to that used in the Lang–Trotter conjecture.

    Assume $\fp$ is a degree 1 prime of $K$ lying above a rational prime $p$. Then, by the Weil conjectures, we have $|a_{2, \fp}|\leq 6p$. Hence, for a fixed integer $t$, the probability that $a_{2, \fp}=t$ is  approximately $\frac{1}{p}$.  Consequently, for the constant function $g(p)=t$, we expect $ \pi_{A, \split,  g(\cdot)}(x) \ll \log\log x$. In particular, a heuristic \textit{upper bound} for \Cref{a2p-interval} is $O\left(x^{\frac{1}{2}}(\log x)^{-(\delta+1)}\log\log x\right)$.  
    
    Now suppose $g(p)=2p+t$ for a fixed integer $t$. By \Cref{main-lem-RM} (see also \cite[Section 1.2]{ShTa2020}), the probability that  $\ol{A}_\fp$ splits is roughly $\frac{1}{\sqrt{p}}$. In this setting, the condition $a_{2, \fp}=g(p)$ implies $b=t$, where $b:=b_\fp$ is the constant  in \Cref{main-lem-RM}. Since $|b|\leq 2\sqrt{p}$, the probability that $b=t$ is approximately $\frac{1}{\sqrt{p}}$. A conditional probability computation then suggests that the  probability that $\ol{A}_\fp$ splits  and $a_{2, \fp}=g(p)$ is roughly $\frac{1}{p}$. Thus, we expect $ \pi_{A, \split,  g(\cdot)}(x) \asymp  \log\log x$ holds in Case (2) of \Cref{a2p-split} as well. 
\end{remark}

 Lastly, we give an overview of the paper.  Section 2  covers essential properties of abelian surfaces over finite fields and  introduces Galois representations for abelian surfaces over number fields.  In Section 3, we introduce essential analytic ingredients for the proof, including the sieve-theoretic lemma and results on unconditional effective Chebotarev Density Theorem. In Section 4, we study the algebraic and arithmetic properties of certain low-dimensional algebraic groups, which may be of independent interest.  In Section 5, we find a  criterion (\Cref{split-lem}) for determining when the Frobenius polynomial $P_{A, \fp}(X)$ splits modulo a prime $\ell$, via an explicit condition involving the Legendre symbol. This result plays a crucial role in  the application of the sieve-theoretic arguments introduced in Section 3.   
Section 6 contains the proof of  \Cref{a2p-split} and  \Cref{a2p-interval}.

\subsection*{Notation}
Throughout, we use $p$ and $\ell$ to denote rational primes; we use $q$ to denote a prime power and denote by  $\F_q$ the finite field with $q$ elements. For a prime $\ell$ and an integer $n$, we denote by $\kronecker{n}{\ell}$ the Legendre symbol, which depends only on the residue class of $n\pmod \ell$. 

For a number field $K$, we denote by $\mathcal{O}_K$ the ring of integers of $K$ and  $\Sigma_K$ the set of all nonzero prime ideals in $\mathcal{O}_K$.  A prime ideal $\fp\in \Sigma_K$ is also referred to as a prime of $K$. We denote by $N(\fp)$ the norm of the ideal $\fp$.   We write $\F_{\fp}=\F_{N(\fp)}$ for the for the corresponding residue field.  If $N(\fp)=p$, we say $\fp$ is a prime of degree 1 of $K$. Given an integer  $N$,  we write $\fp\nmid N$ to indicate that $\fp$ does not divide the principal ideal generated by $N$. 

If $A$ is an abelian variety defined over a number field $K$, we denote by $N_A\in \Z$ the absolute norm of the conductor ideal of $A$. 

Let $X$ be a set and  $f: X\to \R$ and $g: X\to \R_{\geq 0}$ be two functions. We write $f\ll g$ if there is an absolute constant $C>0$ such that  $|f(x)|\leq C g(x)$ for all $x\in X$. We also write $f=\O(g)$ interchangeably.

For a integer $n\geq 1$ and a unitary ring $R$ with the group of units $R^{\times}$, we write $M_n(R)$ to denote the set of all $n\times n$ matrices with coefficients in $R$; we write $\GL_n(R)$ to denote the general linear group with coefficients in $R$;  we write $\GSp_{2n}(R)$ to denote the general symplectic group defined by
\[
\left\{M\in \GL_{2n}(R): J^{t}MJ=\mu M, \mu\in R^{\times}\right\},
\]
where $J :=\begin{pmatrix} 0 & I_n \\ -I_n & 0\end{pmatrix}$ and $J^{t}$ is the transpose of $J$.

\section*{Acknowledgments}
I would like to thank the discussions related to this work with Alina Carmen Cojocaru while I was at the University of Illinois Chicago. I also appreciate enlightening conversations and helpful comments from Valentin Blomer, Chantal David,  Valentijn Karemaker, Junxian Li,  Freddy Saia, Yunqing Tang, and Pengcheng Zhang.   Finally, I would like to thank the Max-Planck-Institut f\"{u}r Mathematik in Bonn for the stimulating atmosphere  to finish this research.  

\section{Preliminaries on abelian surfaces}\label{sec:abelianS}

\subsection{Supersingular abelian surfaces over finite fields}\label{S-2-1}

Let $A$ be an abelian surface  defined over $\F_q$. We begin by revisiting the properties of the  Frobenius polynomial of $A$. The Frobenius endomorphism of $A$, denoted by $\pi_q(A)$, is induced by the Frobenius automorphism $x\mapsto x^q$ on $\F_q$.   The characteristic polynomial of  $\pi_q(A)$, called the {\emph{Frobenius polynomial of $A$}}, is an integral  polynomial of the form 
\[
P_{A, q}(X)=X^4+a_{1, q}X^3+a_{2, q}X^2+qa_{1, q}X+q^2.
\]
Moreover, $P_{A, q}(X)$ is a  $q$-Weil polynomial, i.e., all of its complex roots have absolute value $\sqrt{q}$,  and  
  has the following factorization over $\C[X]$:
\[
P_{A, q}(X)= (X-\alpha)(X-\frac{q}{\alpha})(X-\beta)(X-\frac{q}{\beta}),
\]
where $\alpha, \beta\in \C$ and $|\alpha|=|\beta|=q^{\frac{1}{2}}$.
The coefficients $a_{1, q}, a_{2, q}$ of  $P_{A, q}(X)$ 
satisfy the following bounds (see \cite[Lemma 2.1, p. 323]{MaNa2002}):
\begin{equation}\label{coef-bound}
|a_{1, q}|\leq 4\sqrt{q}, \; 2|a_{1, q}|\sqrt{q}-2q\leq a_{2, q}\leq \frac{a_{1, q}^2}{4}+2q.
\end{equation}
 The constant
\[
a_q(A) :=- a_{1, q}
\]
 is referred as the \emph{Frobenius  trace} of $A$. 
We then define the {\emph{discriminant}}\footnote{This is  different from the discriminant  of a polynomial defined by the product of the squares of the differences of the  roots.} of $P_{A, q}(X)$ as the constant
\[
\Delta_{A, q} := a_{1, q}^2-4a_{2, q}+8q.
\]

The following lemma record several equivalent definitions of a supersingular abelian surface defined over the finite prime field $\F_p$. 
\begin{lemma}\label{def-ss-surface}
The following conditions are equivalent, and each may  be taken as  the definition of a supersingular abelian surface $A/\F_p$:
\begin{enumerate}
\item $A$ is isogenous over $\ol{\F}_p$ to a product of two supersingular elliptic curves. 
\item The Newton polygon of $A$ is a line segment  of slope $\frac{1}{2}$.
\item The $p$-rank 
of $A$ is 0, i.e., $A[p](\ol{\F}_p)=\{0\}$.
\end{enumerate} 
Moreover if  $p\geq 17$ then we have
 $a_p(A)=0$.
\end{lemma}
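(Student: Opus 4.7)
The plan is to treat the equivalences (1)--(3) and the \emph{moreover} statement separately. The equivalences are essentially classical, relying on the Dieudonn\'{e}-Manin classification of $F$-isocrystals over $\ol{\F}_p$, while the final assertion requires an argument combining the $p$-adic valuations of Frobenius eigenvalues with Kronecker's theorem on roots of unity.

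For (1) $\iff$ (2), I would appeal to the Dieudonn\'{e}-Manin theorem: an abelian variety over $\ol{\F}_p$ is isogenous to a product of supersingular elliptic curves if and only if its rational Dieudonn\'{e} module is isoclinic of slope $\frac{1}{2}$, which is the statement that the Newton polygon is a line segment of slope $\frac{1}{2}$. For (2) $\iff$ (3), I would use that the Newton polygon of an abelian surface has four slopes, symmetric under $\lambda \mapsto 1-\lambda$, summing to $2$. Enumerating the three possibilities $(0,0,1,1)$, $(0,\tfrac12,\tfrac12,1)$, and $(\tfrac12,\tfrac12,\tfrac12,\tfrac12)$ shows that the $p$-rank (the multiplicity of the slope~$0$) determines the polygon, and that the polygon is the line of slope $\tfrac12$ exactly when the $p$-rank is~$0$.

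For the \emph{moreover} statement, let $\alpha_1, \ldots, \alpha_4$ be the Frobenius eigenvalues of $A/\F_p$. Since $A$ is supersingular, all the $p$-adic slopes equal $\frac{1}{2}$, i.e.\ $v_p(\alpha_i) = \frac{1}{2}$ in the normalized $p$-adic valuation, while Weil's bound gives $|\alpha_i| = \sqrt{p}$ at every archimedean place. A place-by-place check shows that $\alpha_i^2/p$ is an algebraic integer of absolute value $1$ at every archimedean embedding, so by Kronecker's theorem it is a root of unity; hence $\alpha_i = \sqrt{p}\,\eta_i$ for some root of unity $\eta_i$. Then
\[
\frac{a_{1,p}^2}{p} = \Bigl(\sum_i \eta_i\Bigr)^2
\]
is simultaneously a rational number and an algebraic integer (as a squared sum of roots of unity), hence an element of $\Z$. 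Therefore $p \mid a_{1,p}^2$, and since $p$ is prime, $p \mid a_{1,p}$. Combining with the Weil bound $|a_{1,p}| \leq 4\sqrt{p}$ from~(\ref{coef-bound}), one gets $|a_{1,p}| < p$ as soon as $4\sqrt{p} < p$, i.e.\ $p \geq 17$, which forces $a_{1,p} = 0$ and thus $a_p(A) = 0$.

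The main difficulty is not technical but organizational: one has to reconcile the two competing viewpoints on supersingularity (isogeny type versus Newton polygon / $p$-rank) and then set up the correct normalization of $p$-adic valuations to extract the root-of-unity structure of $\alpha_i^2/p$. The precise threshold $p \geq 17$ is the only place where the prime enters quantitatively, coming directly from $4\sqrt{p} < p$.
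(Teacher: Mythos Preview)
Your argument is correct. For the equivalences (1)--(3) both you and the paper rely on standard theory (the paper simply cites \cite{RuSi2002} and \cite{Pr2019}, while you sketch the Dieudonn\'e--Manin classification and enumerate the three possible Newton polygons of an abelian surface). The content is the same.

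For the \emph{moreover} statement you take a genuinely different route. The paper reads off $v_p(a_{1,p}) \geq \tfrac{1}{2}$ directly from the Newton polygon: since the polygon of $P_{A,p}(X)=X^4+a_{1,p}X^3+\cdots+p^2$ is the segment from $(0,0)$ to $(4,2)$, the point $(1,v_p(a_{1,p}))$ lies on or above it, giving $v_p(a_{1,p})\geq \tfrac12$; as $a_{1,p}\in\Z$, this forces $a_{1,p}=0$ or $|a_{1,p}|\geq p$, and the Weil bound finishes. You instead pass through the eigenvalues, using that all slopes are $\tfrac12$ to show each $\alpha_i^2/p$ is an algebraic integer of archimedean absolute value~$1$, invoke Kronecker to get $\alpha_i=\sqrt{p}\,\eta_i$ with $\eta_i$ a root of unity, and then deduce $p\mid a_{1,p}^2$ from $a_{1,p}^2/p=(\sum_i\eta_i)^2\in\Z$. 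Both routes land on $p\mid a_{1,p}$ and conclude identically via $4\sqrt{p}<p$ for $p\geq 17$. The paper's argument is shorter and uses only the coefficient-side definition of the Newton polygon; your argument is more structural, exposing the root-of-unity nature of the normalized eigenvalues, which is interesting in its own right but not needed for the bare divisibility statement. One small point: your ``place-by-place check'' that $\alpha_i^2/p$ is integral implicitly uses that \emph{all} Galois conjugates of $\alpha_i$ also have $p$-adic valuation $\tfrac12$ (so that every place of $\Q(\alpha_i)$ above $p$ gives valuation~$0$); this follows because all Newton slopes are $\tfrac12$, but it is worth making explicit.
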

\begin{proof}
(1) can be found in \cite[p.339]{RuSi2002}. (2), (3), and their equivalence can be found in  \cite[Proposition 3.1]{Pr2019}.

To show the last part, observe that since the Newton polygon of  $A$ is $\frac{1}{2}$,  then the valuation satisfies $v_p(a_{1, p})\geq \frac{1}{2}$. So    either $|a_{1, p}|\geq p$ or $a_{1, p}=0$. From the upper bound of $a_{1, p}$ in  (\ref{coef-bound}), it follows that $|a_{1, p}|\geq p$ can only occur when  $p\leq 16$. This completes the proof. 
\end{proof}

\begin{remark}\label{lem-7-remark}
Let $A_f/\Q$ be a modular abelian variety associated to a weight 2 non-CM newform  $f$ in the context of Eichler-Shimura theory. Then for all but finitely many primes $p\nmid N_{A_f}$,  the reduction of  $A_f$ at $p$  has $p$-rank  zero if and only if the $p$-th Fourier coefficient 
  $a_p(f)=0$  \cite[Proposition 5.2, p. 62]{BaGo1997}. In particular, let $A/\Q$ be an abelian surface such that $F:=\End_{\Q}(A)\otimes_\Z \Q $ is a real quadratic field. By a result of Ribet \cite{Ri1992}  and the proof of Serre’s modularity conjecture \cite{KhWi2009-I,KhWi2009-II}, $A$ is isogenous over $\Q$ to $A_f$ for a weight 2 non-CM newform $f$ with coefficients in $f$. Hence, for all sufficiently large primes  $p$,   the reduction
of $A$ at $p$ is supersingular if and only if $a_p(A)=a_p(f)+\iota(a_p(f))=0$, where $\iota: F\to \overline{\Q}$ denotes the nontrivial Galois automorphism of $F$. 
\end{remark}

We now make the final assertion of  \Cref{def-ss-surface}  explicit in terms of the Frobenius polynomial. 

\begin{lemma}\label{ss-Weil-poly}
    Let $p\geq 7$ be a prime. Let $f(X)=X^4+a_{1, p}X^3+a_{2, p}X^2+pa_{1, p}X+p^2$ be a $p$-Weil polynomial. Then, 
    \begin{enumerate}
        \item $f(X)$ is the Frobenius polynomial of a simple supersingular abelian surface over $\F_p$ if and only if 
    \[
    f(X)\in \{X^4+pX^2+p^2, X^4-pX^2+p^2, X^4+p^2, X^4-2pX^2+p^2\}.
    \]
    \item $f(X)$ is the Frobenius  polynomial of a supersingular abelian surface that splits over $\F_p$, i.e., the abelian surface is isogenous over $\F_p$ to a product of two (not necessarily distinct) elliptic curves over $\F_p$,  if and only if 
    \[
    f(X)= X^4+2pX^2+p^2.
    \]
    \end{enumerate}
   
\end{lemma}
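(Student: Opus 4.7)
The approach is to pin down $(a_{1,p}, a_{2,p})$ using the Newton polygon characterization from Lemma \ref{def-ss-surface}(2) together with the Weil bounds (\ref{coef-bound}), then invoke Honda--Tate theory to sort the surviving polynomials into simple and split types.

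First I would argue that a slope-$1/2$ Newton polygon forces both $p \mid a_{1,p}$ and $p \mid a_{2,p}$: each point $(i, v_p(a_{4-i,p}))$ must lie on or above the segment from $(0,2)$ to $(4,0)$, which imposes $v_p(a_{1,p}) \geq 1$ and $v_p(a_{2,p}) \geq 1$. Combined with $|a_{1,p}| \leq 4\sqrt{p}$ from (\ref{coef-bound}), this forces $a_{1,p} = 0$ for $p \geq 17$, exactly as in the proof of Lemma \ref{def-ss-surface}. For $p \in \{7, 11, 13\}$ the only remaining possibility $|a_{1,p}| = p$ can be ruled out by the numerical check that the Weil bound interval $[\,2|a_{1,p}|\sqrt{p} - 2p,\; a_{1,p}^2/4 + 2p\,]$ contains no integer divisible by $p$. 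Thus $a_{1,p} = 0$ whenever $p \geq 7$, and the remaining bound $|a_{2,p}| \leq 2p$ together with $p \mid a_{2,p}$ leaves exactly the five values $a_{2,p} \in \{-2p, -p, 0, p, 2p\}$.

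For the simple/split dichotomy I would apply Honda--Tate theory in the form carried out for abelian surfaces by Maisner--Nart \cite{MaNa2002} (building on Waterhouse \cite{Wa1969}). The polynomial $X^4 + 2pX^2 + p^2 = (X^2+p)^2$ is the square of the Weil polynomial of a trace-zero supersingular elliptic curve over $\F_p$ (which exists for $p \geq 5$), and therefore represents the split isogeny class $E \times E$. The polynomial $(X^2 - p)^2$ is also the square of an irreducible $\Q$-polynomial, but $\pm\sqrt{p}$ is \emph{not} a Weil $p$-number realized by an elliptic curve over $\F_p$; Honda--Tate instead associates to it a simple abelian surface whose endomorphism algebra is a quaternion algebra over $\Q(\sqrt{p})$ (ramified at the two real places and at $p$). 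The three remaining polynomials $X^4 \pm pX^2 + p^2$ and $X^4 + p^2$ are irreducible over $\Q$---a brief check using the ansatz $(X^2 + aX + b)(X^2 - aX + c)$ rules out any rational factorization---so each corresponds directly to a simple abelian surface of CM type.

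The principal subtlety is the Honda--Tate step, specifically concluding that $(X^2 - p)^2$ gives a \emph{simple} rather than a split isogeny class: this distinction cannot be read off from the factorization of the Weil polynomial alone, but instead requires the Brauer-invariant calculation at the archimedean places and at $p$. The exclusion of $|a_{1,p}| = p$ for $p \in \{7, 11, 13\}$, by contrast, is only a finite numerical verification.
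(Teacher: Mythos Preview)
Your argument is correct, and it ends up at the same Honda--Tate/Maisner--Nart/Waterhouse endpoint as the paper, but the route is noticeably more hands-on. The paper disposes of part~(1) entirely by citing \cite[Corollary~2.11]{MaNa2002}, and handles part~(2) by quoting Waterhouse's list of supersingular elliptic Weil polynomials over $\F_p$ for the ``only if'' direction and \cite[Theorem~2.9, Corollary~2.10]{MaNa2002} plus a $p$-rank remark for the ``if'' direction. You instead rederive the five candidate polynomials from scratch, using the slope-$\tfrac12$ Newton polygon to force $p\mid a_{1,p}$ and $p\mid a_{2,p}$, the Weil bound $|a_{1,p}|\le 4\sqrt{p}$ to kill $a_{1,p}$ for $p\ge 17$, and a short numerical check in the interval $[\,2p\sqrt{p}-2p,\;p^2/4+2p\,]$ to dispose of $|a_{1,p}|=p$ when $p\in\{7,11,13\}$; only then do you invoke Honda--Tate (via the same references) to separate simple from split. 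What your approach buys is transparency: one sees exactly why these five polynomials and no others survive, and why $(X^2-p)^2$ lands on the simple side via the Brauer invariants at the real places of $\Q(\sqrt{p})$. What the paper's approach buys is brevity, since Maisner--Nart have already tabulated all of this. Either way the genuine content---that $(X^2-p)^2$ is simple while $(X^2+p)^2$ is split---rests on the same local-invariant computation you correctly flag as the crux.
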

\begin{proof}
Part (1) follows from \cite[Corollary 2.11, p. 324]{MaNa2002}. 

For part (2),\footnote{Part (2) could be derived by considering  possible  slopes of the Newton polygon for the Frobenius polynomial of  supersingular abelian surfaces that splits. Here, we will give a more direct proof.} the ``only if'' direction follows from the Honda–Tate theory and the classification of Frobenius polynomials for supersingular elliptic curves over $\F_p$ \cite[Theorem 4.1 (5), p.536]{Wa1969}. 
For the converse, observe from \cite[Theorem 2.9 and Corollary 2.10]{MaNa2002}  that $f(X)$ corresponds to a non-ordinary (i.e., $p\mid a_{2, p}$)  abelian surface $A/\F_p$. Since $A$    splits over $\F_p$ by assumption, it  has to be  supersingular based on the $p$-rank considerations. 
\end{proof}


\subsection{Galois  representations of abelian surfaces over number fields}\label{S2-Galois-rep}
Let $K$ be a number field and $A$ be an absolutely simple abelian surface defined over  $K$. Denote by $D := \End_{\ol{K}}(A)\otimes_{\Z}\Q$ the endomorphism algebra of $A$ over $\ol{K}$.
For a rational prime  $\ell$,   we denote by $A[\ell]$ and $T_{\ell}(A)$ the $\ell$-torsion group of $A$ and the $\ell$-adic Tate module of $A$, respectively. The actions of the absolute Galois group $\Gal(\ol{K}/K)$  on  these modules  give rise to the residual modulo $\ell$ and the $\ell$-adic Galois representations of $A$, denoted respectively by
 \[
 \ol{\rho}_{A, \ell}: \Gal(\ol{K}/K) \to \Aut_{\Z/\ell\Z}(A[\ell]) \quad \text{ and } \quad
\rho_{A, \ell}: \Gal(\ol{K}/K) \to \Aut_{\Z_\ell}(T_\ell(A)).
 \]
 Since $A$ is isogenous over $\ol{K}$ to a principally polarized abelian surface, we obtain a perfect pairing on $A[\ell]$  coming from   the   Weil paring. Hence, the image of these representations lies in the symplectic similitude groups $\GSp_4(\F_\ell)$ and $\GSp_4(\Z_\ell)$, 
 respectively, for all sufficiently large primes $\ell$. 

We now summarize known results on the images of these Galois representations, which depend on the structure of the endomorphism algebra   $D$.  We have that for all sufficiently large prime $\ell$, 
\begin{enumerate} 
    \item \label{Z-Galois-im} if $D=\Q$, then    (see, e.g, \cite{Se2013})  
    \[
    \ol{\rho}_{A, \ell}(\Gal(\ol{K}/K)) = \GSp_4(\F_\ell) \ \text{ and } \ \rho_{A, \ell}(\Gal(\ol{K}/K)) = \GSp_4(\Z_\ell);
    \]
    \item \label{RM-Galois-im} if $\End_{\ol{K}}(A)=\End_{K}(A)$ and $D=F$ is a real quadratic field such that $\ell$ splits completely in $F$ and is unramified in $K$, then    (see \cite[Theorem (5.3.5), p. 800]{Ri1976} or \cite[Remark 1.6, p. 29]{Lom16})
    \[
    \ol{\rho}_{A, \ell}(\Gal(\ol{K}/K)) = \GL_2(\F_\ell)\times_{\det}\GL_2(\F_\ell)  \ \text{ and } \
    \rho_{A, \ell}(\Gal(\ol{K}/K)) = \GL_2(\Z_\ell)\times_{\det}\GL_2(\Z_\ell),
    \]
 where 
   \[
\GL_2(R)\times_{\det}\GL_2(R)\coloneqq  \left\{(M_1, M_2) \in \GL_2(R)\times\GL_2(R): \det(M_1) =  \det(M_2)\right\}
 \]
  for $R\in \{\F_\ell, \Z_\ell\}$;
    \item \label{QM-Galois-im} if  $\End_{\ol{K}}(A)=\End_{K}(A)$ and $D$ is an indefinite quaternion algebra and  $\ell>7$ is a prime satisfying   $\ell$ splits $D$ (i.e., $D\otimes_\Q \Q_\ell \simeq M_2(\Q_\ell)$) and  is unramified in $K$, then  (see \cite[Theorem 3.7]{Oh1974} and \cite{DiRo2004})
      \[
    \ol{\rho}_{A, \ell}(\Gal(\ol{K}/K)) \simeq  \GL_2(\F_\ell)  \ \text{ and } \
    \rho_{A, \ell}(\Gal(\ol{K}/K)) \simeq \GL_2(\Z_\ell).
    \]
    In fact,  the images of $ \ol{\rho}_{A, \ell}$ and $\rho_{A, \ell}$ are diagonal embeddings of $\GL_2(\F_\ell)$ and $\GL_2(\Z_\ell)$ into the fiber products $\GL_2(\F_\ell)\times_{\det}\GL_2(\F_\ell)$ and $\GL_2(\Z_\ell)\times_{\det}\GL_2(\Z_\ell)$, respectively.
    
\end{enumerate}

Finally, we  introduce the field $K^{\conn}$ and the algebraic group $G_{A, \ell}^{\zar}$ which will play a role in the proof of \Cref{RM-poly} in the next section. 

Let  $G_{A, \ell}^{\zar}$ denote the Zariski closure of the image of $\rho_{A,\ell}$ in  $\GL_{4/\Q_\ell}$  and  $(G_{A, \ell}^{\zar})^0$ be  the connected component of  $G_{A, \ell}^{\zar}$. By a result of Serre \cite[Proposition (6.14), p. 623]{LaPi1992}, there exists a number field $K^{\conn}$, independent of the choice of $\ell$, such that the following map is an isomorphism:
\[
\Gal(\overline{K}/K^{\conn})\xrightarrow{\rho_{A, \ell}} G_{A, \ell}^{\zar} \twoheadrightarrow G_{A, \ell}^{\zar}/(G_{A, \ell}^{\zar})^0.
\]
 In other words, $K^{\conn}$ is the minimal subfield of $\ol{K}$ such that the Zariski closure of $\rho_{A, \ell}(\Gal(\ol{K}/K^{\conn}))$ is connected for all primes $\ell$. Moreover, 
if $A$ is an abelian surface, then the field $K^{\conn}$ is also the minimal field of definition for the endomorphisms of $A$. 
Hence, if $K$ is minimal so that $\End_K(A)=\End_{\ol{K}}(A)$, then $K=K^{\conn}$ (see \cite{LaPi1997} or \cite[p. 892, Lemma 2.8 and the paragraph above]{Lo2019}).

\subsection{Split reductions of abelian surfaces with real multiplication}\label{S2.3}

Let $K$ be a number field, and let $A$ be an abelian surface defined over $K$. In this section,   we  study various properties of $A$, focusing on the case where $F := \End_{\ol{K}}(A)\otimes_\Z \Q$ is a real quadratic field. We also use notation introduced in previous sections. We adopt the notation established in previous sections. The results presented here will be used to prove the first case of \Cref{a2p-split}.

  For a prime $\fp$ of $K$ such that $\fp\nmid N_A$, we denote  by $\ol{A}_\fp$ the reduction of $A$ at $\fp$. Let $q=N(\fp)$ and $p$ be the rational prime below $\fp$. The characteristic polynomial of the Frobenius endomorphism  $\ol{A}_{\fp}$ is denoted by
\begin{equation}\label{poly-standard}
P_{A, \fp}(X) := P_{\ol{A}_{\fp}, q}(X)=X^4+a_{1, \fp}X^3+a_{2, \fp}X^2+qa_{1, \fp} X +q^2 \in \Z[X].
\end{equation}
 
 We recall that for a degree 1 prime $\fp$ of $K$, the reduction  $\overline{A}_{\fp}/\F_p$  splits if it is isogenous over $\F_p$ to a product of elliptic curves over $\F_p$.

\begin{lemma}\label{main-lem-RM}
Let $A/K$ be an abelian surface with  $F= \End_{\ol{K}}(A)\otimes_\Z \Q$ being  a real quadratic field. For any degree 1 prime ideal $\fp$ of $K$ such that $\fp\nmid N_A$, if $\overline{A}_{\fp}$ splits, then 
\[
P_{A, \fp}(X) \in \left\{(X^2+bX+p)^2, (X^2+bX+p)(X^2-bX+p) \right\}
\]
for some integer $b$ such that $|b|\leq 2\sqrt{p}$.
Moreover, if $\End_{\ol{K}}(A) = \End_{K}(A)$ and $\overline{A}_{\fp}$ splits, then $\overline{A}_{\fp}$  is isogenous over $\F_p$ to the square of an elliptic curve.  In particular,  we have 
 $P_{A, \fp}(X) =(X^2+bX+p)^2$ for some integer $b$ such that $|b|\leq 2\sqrt{p}$.
\end{lemma}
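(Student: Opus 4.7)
The plan is to use the splitting of $\ol{A}_\fp$ to factor $P_{A,\fp}(X)$ into two quadratics, and then exploit the real multiplication to force the two Frobenius traces to coincide up to sign. First I would observe that, since $\ol{A}_\fp \sim_{\F_p} E_1 \times E_2$ for some elliptic curves $E_i/\F_p$, Frobenius acts block-diagonally on the $\ell$-adic Tate module, so
\[
P_{A, \fp}(X) = (X^2 - a_1 X + p)(X^2 - a_2 X + p),
\]
with $a_i := a_p(E_i) \in \Z$ satisfying $|a_i| \le 2\sqrt{p}$ by Hasse's bound. Because $p$ is prime, the bound is in fact strict, so $\End_{\F_p}(E_i) \otimes \Q = \Q(\pi_{E_i})$ is either $\Q$ or an imaginary quadratic field. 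The task reduces to showing $a_1 = \pm a_2$, with equality of signs whenever $\End_K(A) = \End_{\ol K}(A)$.

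Next I would handle the stronger hypothesis $\End_K(A) = \End_{\ol K}(A)$. In this case the $F$-action descends to $K$, hence to $\ol{A}_\fp$ over $\F_p$, yielding a unital embedding $F \hookrightarrow \End_{\F_p}(\ol{A}_\fp) \otimes \Q \cong \End_{\F_p}(E_1 \times E_2) \otimes \Q$. If $E_1 \not\sim_{\F_p} E_2$, then $\Hom_{\F_p}(E_1, E_2) = 0$ and the target splits as a product $(\End_{\F_p}(E_1) \otimes \Q) \times (\End_{\F_p}(E_2) \otimes \Q)$ of two fields each of which is $\Q$ or imaginary quadratic. Unitality would force each projection of $F$ to be injective, contradicting the fact that a real quadratic field cannot embed in $\Q$ or in an imaginary quadratic field. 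Hence $E_1 \sim_{\F_p} E_2$, so $a_1 = a_2 =: a$ and $P_{A, \fp}(X) = (X^2 - aX + p)^2$ has the claimed square form with $b = -a$.

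For the general statement I would reduce to the strong case via the field $K^{\conn}$ from Section~\ref{S2-Galois-rep}. The action of $\Gal(\ol K/K)$ on $\End_{\ol K}(A)$ factors through $\Aut(F/\Q) = \Z/2\Z$, so $[K^{\conn} : K] \le 2$; set $L := K^{\conn}$ and pick a prime $\fP$ of $L$ above $\fp$ with residue field $\F_{p^f}$, where $f \in \{1, 2\}$. Applying the strong case to $A/L$ at $\fP$ gives $P_{A, \fP}(X) = (X^2 - a' X + p^f)^2$. If $f = 1$, then $P_{A, \fp} = P_{A, \fP}$ already has the square form. If $f = 2$ (the inert case), the roots of $P_{A, \fP}$ are the squares of the roots of $P_{A, \fp}$; matching the multiset $\{\alpha_1^2, \beta_1^2, \alpha_2^2, \beta_2^2\}$ (with $\alpha_i \beta_i = p$) against the paired root pattern of a perfect-square quadratic forces $a_1^2 = a_2^2$, yielding either $P_{A, \fp}(X) = (X^2 + bX + p)^2$ (if $a_1 = a_2$) or $P_{A, \fp}(X) = (X^2 + bX + p)(X^2 - bX + p)$ (if $a_1 = -a_2$). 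The main difficulty is this last root-matching step: the squaring map can conflate roots of distinct quadratic factors, so one must carefully treat the degenerate subcase $a_i = 0$ (where $\alpha_i = -\beta_i$) to confirm that $P_{A, \fp}$ always lands in one of the two claimed shapes.
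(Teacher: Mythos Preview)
The paper does not prove this lemma; it simply cites \cite[Lemma~17]{Wa2023}. So there is no in-paper argument to compare against, and your proposal must stand on its own.

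Your argument for the second assertion (the case $\End_K(A)=\End_{\ol K}(A)$) is correct and is the natural one: a unital ring embedding of the real quadratic field $F$ into a product $D_1\times D_2$ forces $F\hookrightarrow D_i$ for each $i$, and over a prime residue field each $D_i=\End_{\F_p}(E_i)\otimes\Q$ is an imaginary quadratic field, which cannot contain $F$.

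There is one genuine gap in your reduction for the first assertion. When $\fp$ is inert in $L=K^{\conn}$ you invoke the ``strong case'' at the prime $\fP$ of $L$, but $\fP$ then has residue field $\F_{p^2}$, while your proof of the strong case relied on the residue field being prime so that $\End(E_i)\otimes\Q$ is a \emph{field}. Over $\F_{p^2}$ a supersingular factor $E_i'$ can satisfy $\End_{\F_{p^2}}(E_i')\otimes\Q\cong B_{p,\infty}$, the quaternion algebra ramified at $p$ and $\infty$. The fix is short: $B_{p,\infty}$ is definite, and a quadratic field embeds in a quaternion algebra over $\Q$ only if it is non-split at every ramified place, so a real quadratic field (split at $\infty$) still cannot embed. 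With this remark your strong case holds for any residue field, and the root-matching in the $f=2$ case then legitimately yields $a_1^2=a_2^2$; your handling of the degenerate subcase $a_i=0$ is fine.
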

\begin{proof}
See \cite[Lemma 17]{Wa2023}.
\end{proof}

In contrast, when $\overline{A}_\fp$ does not split, the Frobenius polynomial admits a factorization over $F$ as described in the following lemma. 

\begin{lemma}\label{RM-poly}
Let $A/K$ be an abelian surface with $F= \End_{\ol{K}}(A)\otimes_\Z \Q$ being   a real quadratic field.  
Moreover, we assume $\End_{\overline{K}}(A)=\End_{K}(A)$.  Then for each prime ideal $\fp$ with  $\mathfrak{p}\nmid N_A$ and denoting  $q:=N(\fp)$, we have the following factorization of $P_{A, \mathfrak{p}}(X)$ in $F[X]$:
\[
P_{A, \mathfrak{p}}(X)=(X^2+bX+q)(X^2+ \iota(b)X+q),
\]
where $b\in \mathcal{O}_F$ and $\iota: F\to \overline{\Q}$ is the nontrivial Galois automorphism of $F$.
\end{lemma}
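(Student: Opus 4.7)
The plan is to exploit the compatibility between the Frobenius endomorphism $\phi := \pi_q(\ol{A}_\fp)$ and the $F$-action on the $\ell$-adic Tate module $V_\ell(A) := T_\ell(A) \otimes_{\Z_\ell} \Q_\ell$, for any auxiliary prime $\ell$ coprime to $qN_A$. Since the hypothesis $\End_{K}(A) = \End_{\ol{K}}(A)$ ensures that the order $\mathcal{O}_F \subseteq \End_K(A)$ is already defined over $K$, its induced action on $V_\ell(A)$ commutes with the Galois action. The standard theory of abelian varieties with real multiplication then gives that $V_\ell(A)$ is free of rank $2$ over $F \otimes_{\Q} \Q_\ell$. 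Because $\phi$ commutes with the reduction of $\mathcal{O}_F$, it acts $F \otimes \Q_\ell$-linearly on $V_\ell(A)$, and hence admits a degree-$2$ characteristic polynomial $h(X) = X^2 + bX + c \in (F \otimes \Q_\ell)[X]$ as an $F \otimes \Q_\ell$-linear endomorphism.

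I would next descend $h(X)$ to $F[X]$. If $\phi \in F$, one simply takes $h(X) = (X - \phi)^2$; otherwise $F[\phi] \subseteq \End^0(\ol{A}_\fp)$ is a quadratic field extension of $F$ and $h(X)$ coincides with the minimal polynomial of $\phi$ over $F$. In either case $h(X) \in F[X]$, and the integrality of the $\mathcal{O}_F$-action on the Tate lattice $T_\ell(A)$ forces $h(X) \in \mathcal{O}_F[X]$.

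The crucial step is to show $c = q$. For this I would construct an $F$-equivariant refinement of the Weil pairing. Since $F$ is totally real, positivity of the Rosati involution on $\End^0(\ol{A}_\fp)$ forces it to restrict to the identity on $F$, so the Weil pairing $\psi: V_\ell(A) \times V_\ell(A) \to \Q_\ell(1)$ attached to any polarization of $A$ is $\mathcal{O}_F$-equivariant, in the sense that $\psi(\alpha v, w) = \psi(v, \alpha w)$ for all $\alpha \in \mathcal{O}_F$. Exploiting the non-degeneracy of the $\Q$-trace form on $F$, this equivariance yields a non-degenerate alternating $F \otimes \Q_\ell$-bilinear pairing $\Lambda: V_\ell(A) \times V_\ell(A) \to F \otimes_{\Q} \Q_\ell(1)$ satisfying $\psi = \tr_{F \otimes \Q_\ell / \Q_\ell} \circ (\gamma \cdot \Lambda)$ for a fixed $\gamma \in F^{\times}$. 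On a rank-$2$ module an alternating form transforms under an endomorphism by its determinant, so $\Lambda(\phi v, \phi w) = c \cdot \Lambda(v, w)$; comparing this with $\psi(\phi v, \phi w) = q \cdot \psi(v, w)$ and invoking the non-degeneracy of the trace pairing forces $c = q$.

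Finally, the passage from the $F \otimes \Q_\ell$-linear to the $\Q_\ell$-linear characteristic polynomial is the norm map $N_{(F \otimes \Q_\ell)/\Q_\ell}$, which on the image of $F[X]$ amounts to multiplication by the Galois conjugate, giving
\[
P_{A, \fp}(X) = N_{(F \otimes \Q_\ell)/\Q_\ell}(h(X)) = h(X) \cdot \iota(h)(X) = (X^2 + b X + q)(X^2 + \iota(b) X + q),
\]
as claimed. The main obstacle is the identification $c = q$, as it must exclude the a priori possibility that $\phi^2 = q$ with $\phi \notin F$ (which would yield an anomalous factorization $(X^2 - q)(X^2 - q)$ over $F$); the $F$-equivariant pairing argument is precisely what rules this case out.
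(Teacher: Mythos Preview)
Your proof is correct and follows essentially the same route as the paper's: both use that Frobenius acts $F\otimes_\Q\Q_\ell$-linearly on the rank-$2$ module $V_\ell(A)$, so that $P_{A,\fp}$ is the norm from $F[X]$ of a quadratic polynomial. The paper's version is much terser, invoking the inclusion $\rho_{A,\ell}(\Frob_\fp)\in G_{A,\ell}^{\zar}\subseteq \GL_2(F\otimes_\Q\Q_\ell)$ and deferring to \cite{Lo2019} and \cite{Ri1976} for the details you supply directly---in particular the $F$-equivariant refinement of the Weil pairing that pins down the constant term as $q$.
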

\begin{proof}
The proof is similar to the argument in   \cite[Proposition 3.5, p. 902]{Lo2019}.  
If   $\End_{\overline{K}}(A)=\End_{K}(A)$, then  we have  $\rho_{A, \ell}(\Frob_{\fp})\in G_{A, \ell}^{\zar}=(G_{A, \ell}^{\zar})^0\subseteq \GL_2(F\otimes_\Q \Q_\ell)$ for each prime $\ell$  (see  \Cref{S2-Galois-rep}). Therefore,  for $\fp\nmid \ell N_A$, the characteristic polynomial of $\rho_{A, \ell}(\Frob_{\fp})  \subseteq \GL_4(\Z_\ell)$ is of the form $P_{A, \mathfrak{p}}(X)=f(X)\iota(f(X))$, where $f(X)$ is the characteristic polynomial of $\rho_{A, \ell}(\Frob_{\fp})$, viewed as a matrix in  $\GL_2(F\otimes_{\Q} \Q_\ell)$.
\end{proof}

\section{Preparations for analytic theory }\label{sec:analitic}
\subsection{An inclusion-exclusion lemma}\label{S3.1}

Let  $x\in \R_{>0}$  and $t=t(x)\in \Z_{>0}$  be a function tending to infinity as $x\to \infty$.  We also assume
\begin{equation}\label{t-bound}
t(x)\ll (\log x)^{\frac{1}{2}}.
\end{equation}
Let 
$\cal{M}$ be a subset of rational primes up to $x$. The goal of this section is to give an upper bound of $\cal{M}$ using an inclusion-exclusion principle.

We consider a set $\cal{P}$ consisting of $t$ primes that depends on $x$:
  \begin{equation}\label{P-size}
    \ell_1< \ell_2 < \ldots <\ell_t \ll \frac{\log x}{\log\log x}.   
  \end{equation}
  For  each $\ell\in \cal{P}$, we denote by 
  \[
  \cal{M}_\ell =\{p\in \cal{M}: p \pmod \ell \not\in \Omega_\ell\}, 
  \]
  where 
  \[
\Omega_\ell: =\{n\pmod \ell: \kronecker{n}{\ell}=-1\}.
\]
It is an easy observation that $|\Omega_\ell|=\frac{\ell-1}{2}$.
Moreover, we set $\displaystyle P_t:=\prod_{\ell\in \cal{P}}\ell$ and 
\[
\Omega_{P_t} := \{n\pmod{P_t}: \kronecker{n}{\ell}=-1 \ \text{ for all } \ell\in \cal{P}\}.
\]
Then by the Chinese Remainder Theorem, we have $\displaystyle |\Omega_{P_t}|=\prod_{\ell\in \cal{P}}\frac{\ell-1}{2}$.
Therefore, for the set 
\[
\cal{S} :=\cal{M}\backslash \bigcup_{\ell\in \cal{P}} \cal{M}_\ell = \{p\in \cal{M}: \kronecker{p}{\ell}=-1 \  \text{ for all } \ell\in \cal{P}\}
\]
we have that for all sufficiently $x$,
\begin{align*}
 |\cal{S}| & \leq  \#\{p\leq x: p\pmod \ell \in \Omega_\ell \ \text{ for all } \ell\in \cal{P} \}\\
  & =  \#\{p\leq x: p\pmod{P_t} \in \Omega_{P_t}\}\\
  & = \sum_{a\pmod {P_t} \in \Omega_{P_t}}  \#\{p\leq x: p\equiv a \pmod{P_t}\}\\
  & \leq \displaystyle \prod_{\ell\in \cal{P}}\left( \frac{\ell-1}{2}\right) \cdot  \frac{2x}{\displaystyle  \log (x/P_t)\prod_{\ell\in \cal{P}}(\ell-1)}\\
 & \ll \frac{x}{2^t \log(x/P_t)},
  \end{align*}
where in the second last step we use the Brun–Titchmarsh Theorem, which is applicable because $P_t<x$  for all sufficiently large $x$ by (\ref{t-bound}).

We summarize the result in the following lemma. 
\begin{lemma}\label{uncond-lem}
    With the notation above, we have  for all sufficiently large  $x$,
    \[
    |\cal{M}|\ll \sum_{1\leq j\leq t}|\cal{M}_{\ell_j}|+ \frac{x}{2^t \log(x/P_t)}.
    \]
\end{lemma}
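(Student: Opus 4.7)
The plan is to observe that Lemma \ref{uncond-lem} is essentially a repackaging of the estimate derived immediately preceding its statement. Since $\mathcal{S}$ is defined as $\mathcal{M} \setminus \bigcup_{\ell \in \mathcal{P}} \mathcal{M}_\ell$, I would start from the trivial disjoint decomposition
\[
\mathcal{M} = \mathcal{S} \,\sqcup\, \bigl(\mathcal{M} \setminus \mathcal{S}\bigr) = \mathcal{S} \,\sqcup\, \bigcup_{\ell \in \mathcal{P}} \mathcal{M}_\ell,
\]
which gives the cardinality identity $|\mathcal{M}| = |\mathcal{S}| + |\mathcal{M} \setminus \mathcal{S}|$. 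The second term is then controlled by the union bound $|\mathcal{M} \setminus \mathcal{S}| \leq \sum_{j=1}^{t} |\mathcal{M}_{\ell_j}|$, producing the first summand in the conclusion. No further combinatorial input is needed here.

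For the remaining term, I would simply invoke the bound
\[
|\mathcal{S}| \ll \frac{x}{2^t \log(x/P_t)}
\]
that is established in the displayed chain of inequalities just above the lemma statement. The derivation there is the real content: it translates the simultaneous conditions $\left(\frac{p}{\ell_j}\right)=-1$ for $j=1,\ldots,t$ into a single congruence condition modulo $P_t$ via the Chinese Remainder Theorem (producing $\prod_\ell (\ell-1)/2$ admissible residue classes), and then applies the Brun--Titchmarsh Theorem to each residue class to get $\ll 2x/(\log(x/P_t)\prod_\ell(\ell-1))$ primes per class. Multiplying the two factors yields the stated bound on $|\mathcal{S}|$. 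Combining this estimate with the union bound on $|\mathcal{M}\setminus\mathcal{S}|$ completes the proof.

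The only point that requires care, and the mild obstacle in the argument, is verifying that Brun--Titchmarsh is actually applicable, i.e.\ that $P_t < x$ (indeed with $\log(x/P_t) \asymp \log x$, so that the estimate is nontrivial). This is where the hypotheses (\ref{t-bound}) and (\ref{P-size}) enter: from $\ell_j \ll \log x / \log \log x$ and $t \ll (\log x)^{1/2}$ one gets
\[
\log P_t \ll t \cdot \log\!\left(\frac{\log x}{\log\log x}\right) \ll (\log x)^{1/2} \log\log x = o(\log x),
\]
so $P_t = x^{o(1)}$ and in particular $\log(x/P_t) \gg \log x$ for all sufficiently large $x$, as implicitly used in the preceding chain. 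Once this admissibility check is in place, the lemma follows immediately.
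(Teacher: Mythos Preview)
Your proposal is correct and matches the paper's own proof essentially line for line: the paper also writes $|\mathcal{M}|\leq \sum_{j}|\mathcal{M}_{\ell_j}|+|\mathcal{S}|$ and then substitutes the Brun--Titchmarsh bound on $|\mathcal{S}|$ established just above. Your explicit verification that $P_t = x^{o(1)}$ (so that Brun--Titchmarsh applies and $\log(x/P_t)\asymp\log x$) makes explicit what the paper leaves as a one-line remark invoking (\ref{t-bound}).
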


  \begin{proof}
By using an inclusion-exclusion principle and  the bound of $|\cal{S}|$, we immediately obtain  
\[
 |\cal{M}|\leq \sum_{1\leq j\leq t}|\cal{M}_{\ell_j}|+ |\cal{S}|\ll \sum_{1\leq j\leq t}|\cal{M}_{\ell_j}|+ \frac{x}{2^t \log(x/P_t)}.
\]
\end{proof}

 \begin{remark}
From the proof of  \Cref{uncond-lem}, it is clear that bounding  $|\cal{M}|$ is sufficient to control $|\cal{S}|$. In the setting  of sieve theory, the set $\cal{S}$ can be regarded as a sifted set (usually denoted by $\cal{S}(\cal{M}, \cal{P}, (\Omega_\ell)_{\ell\in \cal{P}})$).
 By applying the P\'olya-Vinogradov inequality  and following the arguments in   \cite[Proof of Lemma 4.1, pp. 264-265]{Wa1990}, we get 
\[
|\cal{S}|\leq    \frac{x}{2^t} +\sqrt{P_t} \log P_t.
\]
Alternatively, we can also apply the  large sieve in (\cite[Theorem 7.10, p. 180]{IwKo2004}), we get 
    \[
    |\cal{S}|\leq \frac{x}{\displaystyle 2^t\left(1+\sum_{1\leq i\leq t}\frac{1}{\ell_i}\right)}+ \frac{P_t^2}{\left(\displaystyle 1+\sum_{1\leq i\leq t}\frac{1}{\ell_i}\right)}.
    \]
    However, under the present assumptions on $\cal{P}$ and $t$, neither of these alternative bounds improves the bound in   \Cref{uncond-lem}.
 \end{remark}

\subsection{Effective Chebotarev Density Theorem}\label{ECDT}
 Let $K$ be a number field.  We denote by $d_K$   the absolute discriminant of $K/\Q$ and $n_K:= [K:\Q]$. Let $L/K$ be a finite  extension of number fields with Galois group $G :=\Gal(L/K)$.  We set
\begin{align*}
\cal{P}(L/K) & :=\{p: \text{ there exists }  \mathfrak{p}\in \Sigma_K, \text{ such that } \mathfrak{p}\mid p \text{ and $\mathfrak{p}$ is ramified in } L\},
\\
M(L/K) & := [L:K] d_K^{\frac{1}{n_K}}\prod_{p\in \cal{P}(L/K)} p
\end{align*}
Let $\cal{C}\subseteq G$ be a conjugation invariant  set  of $G$. We denote by
\[
\pi_{\cal{C}}(x, L/K) := \#\{\fp\in \Sigma_K: N(\fp)\leq x, \fp \text{ unramified in } L/K, \kronecker{L/K}{\fp}\subseteq \cal{C}\},
\]
where $\kronecker{L/K}{\fp}$ is the Artin symbol of $L/K$ at $\mathfrak{p}$.

First, we state the upper bound of $\log d_L$ due to Hensel, where the proof is given in  \cite[Proposition 5, p. 129]{Se1981}. 
\begin{lemma} \label{lem-logdK} If $L/K$ is a finite Galois extension of number fields, then
\begin{align*}
\log d_L  \leq & [L : K]\log d_K +  
([L : \Q] - [K : \Q])
\log \rad(d_{L/K})+ [L : \Q]  \log [L : K],
\end{align*} 
 where $\rad n \coloneqq \prod_{p \mid n} p$ is the radical of the integer $n$ and $d_{L/K}$ is the relative discriminant for $L/K$. 
\end{lemma}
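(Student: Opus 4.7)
The plan is to prove the bound via the tower formula for discriminants together with Hensel's pointwise estimate for the different, and then to sum the estimate cleanly across all ramified primes. The first step is to write
\[
d_L = d_K^{[L:K]}\cdot \bigl|N_{K/\Q}(\mathfrak{d}_{L/K})\bigr|,
\]
so that taking logarithms reduces the claim to bounding $\log |N_{K/\Q}(\mathfrak{d}_{L/K})|$ by $([L:\Q]-[K:\Q])\log\rad(d_{L/K}) + [L:\Q]\log[L:K]$. Expanding the norm prime by prime gives $\log|N_{K/\Q}(\mathfrak{d}_{L/K})| = \sum_{\mathfrak{p}} f_{\mathfrak{p}/p}\, v_{\mathfrak{p}}(\mathfrak{d}_{L/K})\log p$, where only primes $\mathfrak{p}$ of $K$ ramified in $L$ contribute, and their residue characteristics lie in $\cal{P}(L/K)$.

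Next, I would invoke Hensel's bound: for every prime $\mathfrak{P}$ of $L$ above $\mathfrak{p}$ with ramification index $e_2 = e(\mathfrak{P}/\mathfrak{p})$,
\[
v_{\mathfrak{P}}(\mathfrak{D}_{L/K}) \leq e_2 - 1 + v_{\mathfrak{P}}(e_2).
\]
Because $L/K$ is Galois, the invariants $(e_2,f_2,g_2)$ are common to all $\mathfrak{P}\mid\mathfrak{p}$ with $e_2 f_2 g_2 = [L:K]$, and one has $v_{\mathfrak{P}}(e_2) = e_1 e_2\, v_p(e_2)$, where $e_1 = e(\mathfrak{p}/p)$. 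Combining these through $v_{\mathfrak{p}}(\mathfrak{d}_{L/K}) = \sum_{\mathfrak{P}|\mathfrak{p}} f_2\, v_{\mathfrak{P}}(\mathfrak{D}_{L/K})$ yields
\[
v_{\mathfrak{p}}(\mathfrak{d}_{L/K}) \leq [L:K]\bigl(1 - 1/e_2\bigr) + e_1[L:K]\, v_p(e_2),
\]
which separates the estimate into a \emph{tame} piece and a \emph{wild} piece.

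Then I would sum each piece separately using the identity $\sum_{\mathfrak{p}|p} e_1 f_{\mathfrak{p}/p} = [K:\Q]$. For the tame piece, the observation $[L:K](1-1/e_2) \leq [L:K]-1$ (since $e_2 \mid [L:K]$) together with $\sum_{\mathfrak{p}|p} f_{\mathfrak{p}/p} \leq [K:\Q]$ gives a contribution of at most $([L:\Q]-[K:\Q])\log p$ per ramified rational prime $p\in\cal{P}(L/K)$, and hence $([L:\Q]-[K:\Q])\log\rad(d_{L/K})$ in total. For the wild piece, $v_p(e_2) \leq v_p([L:K])$ and the same degree identity give a per-prime contribution of at most $[L:\Q]\, v_p([L:K])\log p$, which telescopes over all $p$ into $[L:\Q]\log[L:K]$ via $\sum_p v_p(n)\log p = \log n$ applied to $n = [L:K]$. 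Combining these bounds with $[L:K]\log d_K$ from the tower formula produces the claimed inequality.

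The main delicate point will be the wild contribution: one has to carry the $\mathfrak{P}$-adic valuation $v_{\mathfrak{P}}(e_2)$ through the identity $v_{\mathfrak{P}}(e_2) = e_1 e_2\, v_p(e_2)$ so that the double sum over primes $\mathfrak{p}$ of $K$ and rational primes $p$ below them collapses cleanly into the single logarithm $\log[L:K]$. Once that bookkeeping is in place, the tame estimate is an almost immediate consequence of the divisibility $e_2 \mid [L:K]$ and of the degree identity $\sum_{\mathfrak{p}|p} e_1 f_{\mathfrak{p}/p} = [K:\Q]$.
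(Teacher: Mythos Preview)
Your proof is correct and is precisely the standard argument that Serre gives in \cite[Proposition~5, p.~129]{Se1981}, which is what the paper cites without reproducing. The paper does not supply its own proof; it simply attributes the bound to Hensel and refers to Serre for the argument, so there is nothing further to compare.
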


Next, we recall an unconditional version of the  effective Chebotarev Density Theorem due to Thorner and Zaman \cite[Theorem 9.1, p. 5022]{ThZa2018}.
\begin{theorem}\label{uncond-ECDT}
Let $L/K$ be an abelian extension of number fields. Let $G=\Gal(L/K)$ and $\cal{C}$ be a conjugation invariant set of $G$. Then,  for $x$ with
$\log x\gg n_K \log (M(L/K)x)$,  we have 
\[
\pi_{\cal{C}}(x, L/K)\ll \frac{|\cal{C}|}{|G|}\Li(x).
\]
\end{theorem}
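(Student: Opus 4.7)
The plan is to use the abelian hypothesis to reduce the counting problem to sums involving Hecke characters of $K$, then control those sums by combining a log-free zero density estimate with the Deuring-Heilbronn phenomenon. Since $G=\Gal(L/K)$ is abelian, every irreducible character $\chi$ of $G$ is one-dimensional, so orthogonality of characters gives
\[
\pi_{\cal{C}}(x, L/K) \;=\; \frac{|\cal{C}|}{|G|}\,\pi_K(x) \;+\; \frac{1}{|G|} \sum_{\chi \neq 1} \Big(\sum_{g \in \cal{C}} \overline{\chi(g)}\Big) \sum_{\substack{\fp \text{ unramified} \\ N(\fp)\leq x}} \chi(\Frob_\fp).
\]
By class field theory each nontrivial $\chi$ corresponds to a primitive Hecke character of $K$ whose conductor is supported on the primes of $\cal{P}(L/K)$, so its Hecke $L$-function $L(s,\chi)$ is entire. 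The prime ideal theorem for $K$ supplies the main term $\frac{|\cal{C}|}{|G|}\,\pi_K(x) \sim \frac{|\cal{C}|}{|G|}\Li(x)$; the task is therefore to dominate the inner sums over $\fp$ by an error strictly smaller than this in the stated range.

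Next, I would apply the explicit formula for $L(s,\chi)$ to rewrite each inner sum as (minus) a sum over the nontrivial zeros of $L(s,\chi)$, plus admissible lower-order terms. A classical de~la~Vall\'ee Poussin zero-free region would yield only a saving of $\exp(-c\sqrt{\log x})$, which gets destroyed once one sums over the $|G|-1$ nontrivial characters, since $|G|$ may be as large as a power of $M(L/K)$. The crucial analytic input is thus a \emph{log-free zero density estimate} of the shape
\[
\sum_{\chi} N_\chi(\sigma,T) \;\ll\; (QT)^{c(1-\sigma)},
\]
summed over primitive Hecke characters of $K$ of analytic conductor at most $Q$. Combined with the conductor-discriminant formula, which writes $\log d_L$ as a sum of $\log(\text{cond}\,\chi)$ over $\chi$, and Lemma~\ref{lem-logdK} (which controls $\log d_L$ in terms of $M(L/K)$), this produces an admissible total error in the hypothesized range $\log x \gg n_K \log(M(L/K)x)$.

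The main obstacle, and the reason the theorem is nontrivial, is the possible existence of a Landau-Siegel exceptional real zero $\beta_0$ of some real Hecke character $\chi_0$ in the family, which could contribute a term of size $\Li(x^{\beta_0})$ and potentially swamp the main term. To defuse this I would invoke the Deuring-Heilbronn repulsion phenomenon for the family of Hecke $L$-functions of conductor bounded in terms of $M(L/K)$: if such an exceptional zero exists and is genuinely close to $1$, then every other nontrivial $L(s,\chi)$ in the family automatically has a substantially enlarged zero-free region, so the contribution from all $\chi \neq \chi_0$ becomes negligible, while the single bad character $\chi_0$ contributes at most $\frac{1}{|G|}\Li(x^{\beta_0}) \ll \frac{|\cal{C}|}{|G|}\Li(x)$ once $x$ is large enough relative to the conductor. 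The abelian hypothesis is essential throughout, because it turns all relevant Artin $L$-functions into abelian Hecke $L$-functions, which are unconditionally holomorphic and amenable to the above tools, thereby sidestepping any appeal to Artin's Holomorphy Conjecture.
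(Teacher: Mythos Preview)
The paper does not prove this theorem at all; it is stated as a quotation of \cite[Theorem~9.1, p.~5022]{ThZa2018} and used as a black box. There is therefore no ``paper's own proof'' to compare your proposal against.

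That said, your sketch accurately captures the architecture of the Thorner--Zaman argument that the paper is citing: orthogonality of the abelian characters of $G$ to split $\pi_{\cal{C}}(x,L/K)$ into a main term plus character sums, class field theory to identify each nontrivial $\chi$ with a primitive Hecke character of $K$ (so the $L$-functions are unconditionally entire), the explicit formula, a log-free zero density estimate, and Deuring--Heilbronn repulsion to neutralize a possible exceptional real zero. One small slip: the contribution of the exceptional character $\chi_0$ carries a coefficient $\frac{1}{|G|}\bigl|\sum_{g\in\cal{C}}\overline{\chi_0(g)}\bigr|\le \frac{|\cal{C}|}{|G|}$, not $\frac{1}{|G|}$ as you wrote, but this only helps. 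What your outline leaves implicit is the substantial work in \cite{ThZa2018} needed to make the log-free zero density and repulsion estimates uniform in the base field $K$ (hence the factor $n_K$ in the hypothesis $\log x\gg n_K\log(M(L/K)x)$); this uniformity is the genuine technical content of the cited result and is not something one can fill in by hand.
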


Finally, we present the following induction and restriction properties of $\pi_{\cal{C}}(x, L/K)$.

\begin{lemma}\label{func-ECDT}
Let  $L/K$ be a finite extension of  number fields with Galois  group $G$. Let $\cal{C}$ be a conjugation invariant set of $G$. Let $H$ be a subgroup of $G$ and $N$ be a normal subgroup of $H$. Assume that 
\begin{enumerate}
    \item every element of $\cal{C}$  is conjugate over $G$ to an element in $H$.\label{H-C}
    \item $N(\cal{C} \cap H)\subseteq \cal{C}\cap H$.\label{N-C}
\end{enumerate}
   Then, we have that for all sufficiently large $x$, 
  \begin{align*}
          \pi_{\cal{C}}(x,L/K)\ll \pi_{\ol{\cal{C}\cap H}}(x,L^N/L^H)+\O\left(n_{L^H} \left(\frac{x^{\frac{1}{2}}}{\log x}+\log M(L/K)\right)\right),
    \end{align*}
    where $L^H$  and $L^N$ represent the fixed field of $L$ by $H$ and $N$, respectively, and  $\ol{\cal{C}\cap  H}$ represents the image of $\cal{C}\cap  H$ in $H/N$.
\end{lemma}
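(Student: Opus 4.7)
The strategy is to reduce the counting along the tower $K\subseteq L^H\subseteq L^N\subseteq L$, using hypothesis (\ref{H-C}) to descend Frobenius data from $G$ into $H$, and hypothesis (\ref{N-C}) to project further from $H$ onto $H/N=\Gal(L^N/L^H)$. An elementary injection of primes along this tower yields the main inequality, and the explicit error terms should be read as a generous overestimate that meshes cleanly with subsequent uses of Theorem~\ref{uncond-ECDT}.

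Let $\fp$ be a prime counted by $\pi_{\cal{C}}(x,L/K)$, and write $C\subseteq\cal{C}$ for its $G$-conjugacy class of Frobenius. Hypothesis (\ref{H-C}) gives $C\cap H\ne\emptyset$, so we may choose a prime $\fP$ of $L$ above $\fp$ whose Frobenius $\sigma_{\fP}$ lies in $H$, and hence in $\cal{C}\cap H$. Put $\fp':=\fP\cap L^H$. Because $\fp$ is unramified in $L/K$ and $\sigma_{\fP}\in H=\Gal(L/L^H)$, the relative residue degree is
\[
f(\fp'/\fp)=\bigl[\langle\sigma_{\fP}\rangle:\langle\sigma_{\fP}\rangle\cap H\bigr]=1,
\]
so $N(\fp')=N(\fp)\le x$, and $\fp'$ is unramified in $L/L^H$ with Frobenius class (in $H$) equal to the $H$-conjugacy class of $\sigma_{\fP}$, a subset of $\cal{C}\cap H$ (which is automatically $H$-invariant because $\cal{C}$ is $G$-invariant). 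By hypothesis (\ref{N-C}), the set $\cal{C}\cap H$ is a union of $N$-cosets of $H$, so its image $\ol{\cal{C}\cap H}$ in $H/N$ is well-defined; consequently $\fp'$ is unramified in the subextension $L^N/L^H$ and its Frobenius there lies in $\ol{\cal{C}\cap H}$. Hence $\fp'$ is counted by $\pi_{\ol{\cal{C}\cap H}}(x,L^N/L^H)$.

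Choosing one such $\fP$ for each $\fp$ produces a map $\fp\mapsto\fp'$ which is injective since $\fp=\fp'\cap K$ is recovered from $\fp'$. This yields the clean bound
\[
\pi_{\cal{C}}(x,L/K)\le\pi_{\ol{\cal{C}\cap H}}(x,L^N/L^H),
\]
and the extra error term $\O\bigl(n_{L^H}(x^{1/2}/\log x+\log M(L/K))\bigr)$ can then be added freely. Two natural contributions motivate its shape: primes of $K$ whose absolute residue degree over $\Q$ is at least $2$ contribute at most $\sum_{k\ge 2}n_K\,\pi(x^{1/k})\ll n_{L^H}\,x^{1/2}/\log x$ by a trivial count, and the primes of $L^H$ lying above a rational prime ramifying in $L/K$ are bounded, via Lemma~\ref{lem-logdK}, by $O(n_{L^H}\log M(L/K))$.

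The key subtle point is the computation $f(\fp'/\fp)=1$, which is exactly what aligns the norm ranges on the two sides so that the injection sends norm-$\le x$ primes of $K$ to norm-$\le x$ primes of $L^H$; once this is established, everything else is formal decomposition-theoretic bookkeeping, and no analytic input is needed at this stage of the argument.
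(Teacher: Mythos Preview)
Your argument is correct and is essentially the approach the paper takes, which simply packages the two reduction steps by citing \cite[Lemma~2.6(i),(ii) and Lemma~2.7]{Zy2015}: the first step passes from $G$ to $H$ via the choice of $\fP$ with $\sigma_{\fP}\in H$ (your induction step), and the second passes from $H$ to $H/N$ (your restriction step). Your direct injection in fact gives the clean inequality $\pi_{\cal{C}}(x,L/K)\le\pi_{\ol{\cal{C}\cap H}}(x,L^N/L^H)$ with no error term at all; the paper's error terms are inherited from the general formulation of Zywina's lemmas and are, as you note, superfluous under the present hypotheses. One small remark: hypothesis~(\ref{N-C}) is not actually needed to make the image $\ol{\cal{C}\cap H}$ well-defined (the image of any set under a quotient map is well-defined); its role is rather to ensure that $\cal{C}\cap H$ equals the full preimage of $\ol{\cal{C}\cap H}$, which matters for size comparisons downstream but not for the inequality you prove here.
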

\begin{proof}

By \cite[Lemma 2.6 (i), p. 241  and Lemma 2.7, p. 242]{Zy2015},  we have 
\begin{align*}
&  \pi_{\cal{C}}(x,L/K)+\O\left(n_K \left(\frac{x^{\frac{1}{2}}}{\log x}+\log M(L/K) \right)\right) \\
 & \leq  \pi_{\cal{C}\cap H}(x,L/L^H)+\O\left(n_{L^H} \left(\frac{x^{\frac{1}{2}}}{\log x}+\log M(L/L^H) \right)\right).
\end{align*}
Therefore, 
 \begin{align*}
         \pi_{\cal{C}}(x,L/K)& \ll \pi_{\cal{C}\cap H}(x,L/L^H)+ n_K \left(\frac{x^{\frac{1}{2}}}{\log x}+\log M(L/K) \right)\\
         & \hspace{3.2cm} + 
         n_{L^H} \left(\frac{x^{\frac{1}{2}}}{\log x}+\log M(L/L^H) \right).
    \end{align*}
Similarly, by
\cite[Lemma 2.6 (ii), p. 241  and Lemma 2.7, p. 242]{Zy2015}, we have 
 \begin{align*}
        \pi_{\cal{C}\cap H}(x,L/L^H)&= \pi_{\ol{\cal{C}\cap H}}(x,L^N/L^H) +  n_{L^H} \left(\frac{x^{\frac{1}{2}}}{\log x}+\log M(L/L^H) \right)\\
         & \hspace{3.3cm} + 
         n_{L^H} \left(\frac{x^{\frac{1}{2}}}{\log x}+\log M(L^N/L^H) \right).
    \end{align*}
The conclusion  follows by combining the two results together. 
\end{proof}

\section{Results on matrix groups}\label{sec: matrix}
\subsection{Subgroups and conjugacy classes of \texorpdfstring{$\GL_2(\F_\ell)$}{Lg}}\label{QM-section}
  Let $\ell$ be a rational prime. In this section, our attention is directed towards subsets of the general linear group $\GL_2(\F_{\ell})$. We introduce the following subgroups of $\GL_2(\F_{\ell})$.
  
\begin{align*}
  \cal{B}(\ell) & :=
    \left\{M\in  \GL_2(\F_\ell): M=\left(\begin{matrix}
    \lambda_1 & a\\ 
    0 & \lambda_2
    \end{matrix}\right), \lambda_1, \lambda_2 \in \F_\ell^{\times}, a \in \F_\ell \right\},\\
    \cal{U}(\ell) & := 
    \left\{M\in  \GL_2(\F_\ell): M=\left(\begin{matrix}
    1 & a\\ 
    0 & 1
    \end{matrix}\right), a\in \F_\ell \right\},\\
    \cal{U}'(\ell) & := 
    \left\{M\in  \GL_2(\F_\ell): M=\left(\begin{matrix}
    \lambda & a\\ 
    0 & \lambda
    \end{matrix}\right), \lambda \in \F_\ell^{\times}, a\in \F_\ell \right\}, \\
    \cal{T}(\ell) & :=\left\{M\in  \GL_2(\F_\ell): M=\left(\begin{matrix}
    \lambda_1 & 0\\ 
    0 & \lambda_2
    \end{matrix}\right), \lambda_1, \lambda_2 \in \F_\ell^{\times} \right\}.
\end{align*}

We recall several properties of $\GL_2(\F_\ell)$.

\begin{proposition}\label{group-prop-QM}  
Let $\ell\geq 5$. The following statements hold. 
    \begin{enumerate}
     \item 
        \begin{align*}
         |\GL_2(\F_\ell)| &= (\ell-1)^2\ell(\ell+1),\\
            |\cal{B}(\ell)| & = (\ell-1)^2\ell,\\
            |\cal{U}(\ell)| &= \ell,\\
            |\cal{U}'(\ell)| & = \ell(\ell-1),\\
            |\cal{T}(\ell)| & = (\ell-1)^2.
        \end{align*}
        \label{group-prop-3-QM}
        \item $\cal{U}(\ell)$ and $\cal{U}'(\ell)$ are normal subgroups of $\cal{B}(\ell)$. \label{group-prop-1}
        \item The quotients $\cal{B}(\ell)/\cal{U}(\ell)$ and $\cal{B}(\ell)/\cal{U}'(\ell)$ are abelian. Moreover, we have the isomorphism   $\cal{B}(\ell)/\cal{U}(\ell)\simeq \cal{T}(\ell)$.\label{group-prop-2-QM}
        \end{enumerate} 
\end{proposition}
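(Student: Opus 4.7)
The proposition is essentially a collection of standard facts about the Borel subgroup of $\GL_2(\F_\ell)$, so my plan is to verify each item by direct computation rather than to invoke any structure theory.

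For part (\ref{group-prop-3-QM}), I would compute the orders by directly counting parameters. The order of $\GL_2(\F_\ell)$ follows from counting ordered bases of $\F_\ell^2$: the first column is any nonzero vector ($\ell^2-1$ choices) and the second is any vector not in its span ($\ell^2-\ell$ choices), yielding $(\ell^2-1)(\ell^2-\ell)=(\ell-1)^2\ell(\ell+1)$. The remaining four cardinalities are read off from the number of free parameters in each parametrization: $|\cal{B}(\ell)|=(\ell-1)^2\ell$ (choices of $\lambda_1,\lambda_2\in\F_\ell^\times$ and $a\in\F_\ell$), $|\cal{U}(\ell)|=\ell$, $|\cal{U}'(\ell)|=\ell(\ell-1)$, $|\cal{T}(\ell)|=(\ell-1)^2$.

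For part (\ref{group-prop-1}), I would verify normality by a direct conjugation computation. For $B=\begin{pmatrix}\lambda_1 & a\\ 0 & \lambda_2\end{pmatrix}\in\cal{B}(\ell)$ one has $B^{-1}=\begin{pmatrix}\lambda_1^{-1} & -a\lambda_1^{-1}\lambda_2^{-1}\\ 0 & \lambda_2^{-1}\end{pmatrix}$, and conjugating a generic element of $\cal{U}(\ell)$ or $\cal{U}'(\ell)$ by $B$ gives another upper triangular matrix whose diagonal is preserved (it is either $(1,1)$ or $(\lambda,\lambda)$, respectively). This shows $B\cal{U}(\ell)B^{-1}\subseteq\cal{U}(\ell)$ and $B\cal{U}'(\ell)B^{-1}\subseteq\cal{U}'(\ell)$, proving both are normal in $\cal{B}(\ell)$.

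For part (\ref{group-prop-2-QM}), I would produce explicit surjective homomorphisms with the required kernels. The map
\[
\varphi:\cal{B}(\ell)\to\cal{T}(\ell),\qquad \begin{pmatrix}\lambda_1 & a\\ 0 & \lambda_2\end{pmatrix}\mapsto\begin{pmatrix}\lambda_1 & 0\\ 0 & \lambda_2\end{pmatrix},
\]
is a homomorphism by a short matrix multiplication, clearly surjective, and its kernel is exactly $\cal{U}(\ell)$; the first isomorphism theorem then gives $\cal{B}(\ell)/\cal{U}(\ell)\simeq\cal{T}(\ell)$, which is abelian since $\cal{T}(\ell)\simeq\F_\ell^\times\times\F_\ell^\times$. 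For $\cal{B}(\ell)/\cal{U}'(\ell)$ I would use
\[
\psi:\cal{B}(\ell)\to\F_\ell^\times,\qquad \begin{pmatrix}\lambda_1 & a\\ 0 & \lambda_2\end{pmatrix}\mapsto \lambda_1\lambda_2^{-1},
\]
which is a surjective homomorphism with kernel $\cal{U}'(\ell)$; hence $\cal{B}(\ell)/\cal{U}'(\ell)\simeq\F_\ell^\times$, abelian.

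There is no real obstacle here; every assertion reduces to one matrix computation. The only mild point to keep track of is the hypothesis $\ell\geq 5$, which ensures $\F_\ell^\times$ is nontrivial enough that the index and order formulas are consistent with the isomorphisms produced above (in particular $|\cal{B}(\ell)|/|\cal{U}(\ell)|=(\ell-1)^2=|\cal{T}(\ell)|$ and $|\cal{B}(\ell)|/|\cal{U}'(\ell)|=\ell-1=|\F_\ell^\times|$), which serves as a sanity check on the argument.
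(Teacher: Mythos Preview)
Your proof is correct and matches the spirit of the paper's own treatment, which simply notes that the verifications are straightforward and cites \cite[Section 4]{CoWa2023} for the details; you have written out precisely those direct matrix computations. One small remark: the hypothesis $\ell\geq 5$ plays no role in your argument (nor should it---every statement here holds for all odd primes), so your closing comment about it is unnecessary.
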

\begin{proof}
The proofs are straightforward and can be found in \cite[Section 4]{CoWa2023}.
\end{proof}

Next, we introduce 
some conjugation invariant sets of $\GL_2(\F_\ell)$.
\begin{align}
     \cal{C'}^{4}(\ell) & := 
    \{M\in \GL_2(\F_\ell): \car_M(X) = X^2-\mu =\prod_{1\leq j\leq 2}(X-\lambda_j), \lambda_j\in \F_\ell^{\times}, \mu \in \F_\ell^{\times}\},\label{conj-6}\\
         \cal{C'}^{5}(\ell) & := 
    \{M\in \GL_2(\F_\ell): \car_M(X) = X^2+\mu=\prod_{1\leq j\leq 2}(X-\lambda_j),\lambda_j\in \F_\ell^{\times}, \mu \in \F_\ell^{\times}\}.\label{conj-7}
\end{align}
\begin{remark}\label{conj-class-GSp}
Although the sets defined in  (\ref{conj-6}) and (\ref{conj-7})  are identical as subsets of $\GL_2(\F_\ell)$, we distinguish them notationally to streamline the argument presented in \Cref{proof-QM}.
\end{remark}

\begin{proposition}\label{conj-prop-QM} 
Let $\ell\geq 5$. For each $i\in\{4, 5\}$, the following statements hold.
  \begin{enumerate}
    \item $\cal{C'}^{i}(\ell)$ is nonempty and is invariant under conjugation by $\GL_2(\F_\ell)$. \label{conj-prop-1-QM}
    \item Every element of $\cal{C'}^{i}(\ell)$ is conjugate over $\GL_2(\F_\ell)$ to an element in $\cal{B}(\ell)$. \label{conj-prop-2-QM}
    \item We have $\cal{U}'(\ell) \cal{C'}^{i}(\ell)\subseteq \cal{C'}^{i}(\ell)$. \label{conj-prop-3-QM}
    \end{enumerate}
\end{proposition}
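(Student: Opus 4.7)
The plan is to verify parts (1)--(3) in turn, exploiting the explicit description of $\cal{C'}^i(\ell)$ via characteristic-polynomial coefficients. By Remark~\ref{conj-class-GSp}, $\cal{C'}^4(\ell) = \cal{C'}^5(\ell)$ as sets (reparametrize $\mu \mapsto -\mu$), so any argument for one case covers both; the common set consists of $M \in \GL_2(\F_\ell)$ with $\tr(M) = 0$ and $\car_M(X) = (X-\lambda)(X+\lambda)$ for some $\lambda \in \F_\ell^\times$.

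For Part~(1), I would exhibit $\diag(1,-1)$ as an element (with $\mu = 1$ in $\cal{C'}^4(\ell)$ and $\mu = -1$ in $\cal{C'}^5(\ell)$), and observe that conjugation invariance is automatic since $\tr$, $\det$, and the factorization shape of $\car_M$ over $\F_\ell$ are $\GL_2(\F_\ell)$-conjugation invariants. For Part~(2), the hypothesis $\ell \geq 5$ ensures that the eigenvalues $\lambda, -\lambda$ of any $M \in \cal{C'}^i(\ell)$ are distinct in $\F_\ell$, so $M$ is $\F_\ell$-diagonalizable and hence conjugate to $\diag(\lambda, -\lambda) \in \cal{T}(\ell) \subseteq \cal{B}(\ell)$.

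For Part~(3), my plan is to combine Part~(2) with a direct matrix computation. Given $U = \mu I + a E_{12} \in \cal{U}'(\ell)$ and $M \in \cal{C'}^i(\ell)$, Part~(2) lets me replace $M$ by a Borel representative $M' = \diag(\lambda, -\lambda) + b E_{12} \in \cal{B}(\ell) \cap \cal{C'}^i(\ell)$; for such $M'$ the explicit product is
\[
UM' \;=\; \begin{pmatrix} \mu\lambda & \mu b - a\lambda \\ 0 & -\mu\lambda \end{pmatrix},
\]
which is upper triangular with trace $0$, determinant $-(\mu\lambda)^2 \in \F_\ell^\times$, and characteristic polynomial $(X - \mu\lambda)(X + \mu\lambda)$ splitting over $\F_\ell^\times$; hence $UM' \in \cal{C'}^i(\ell)$. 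The conjugation invariance of $\cal{C'}^i(\ell)$ from Part~(1), together with the normality of $\cal{U}'(\ell)$ in $\cal{B}(\ell)$ (Proposition~\ref{group-prop-QM}(\ref{group-prop-1})), would then be used to transport this conclusion back to the original product $UM$.

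The main obstacle I expect is making the reduction step in Part~(3) fully rigorous: left multiplication by $\cal{U}'(\ell)$ is not conjugation-equivariant in the naive sense, so the passage from $M$ to a Borel representative and back to $UM$ requires carefully tracking the interplay between the outer $\GL_2(\F_\ell)$-conjugation supplied by Part~(2) and the inner $\cal{B}(\ell)$-action under which $\cal{U}'(\ell)$ is stable. It is worth noting that the downstream application via Lemma~\ref{func-ECDT} only invokes the $H = \cal{B}(\ell)$-restricted inclusion of condition~(\ref{N-C}), so the Borel-level computation above is exactly what is needed for the subsequent use of this proposition.
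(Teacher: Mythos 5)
Parts (1) and (2) of your proposal are correct and essentially the paper's own argument (explicit diagonal witness, conjugation-invariance of the characteristic polynomial, diagonalizability in place of the paper's appeal to Jordan form). The genuine problem is the ``transport back'' step in Part (3), which you yourself flag: it cannot be made rigorous, because the unrestricted inclusion $\cal{U}'(\ell)\,\cal{C'}^{i}(\ell)\subseteq\cal{C'}^{i}(\ell)$ is in fact false. Writing $M=gM'g^{-1}$ with $M'\in\cal{B}(\ell)$ gives $UM=g\bigl((g^{-1}Ug)M'\bigr)g^{-1}$, and since $\cal{U}'(\ell)$ is normal only in $\cal{B}(\ell)$ and not in $\GL_2(\F_\ell)$, the element $g^{-1}Ug$ need not lie in $\cal{U}'(\ell)$; conjugation invariance of $\cal{C'}^{i}(\ell)$ then tells you nothing about $(g^{-1}Ug)M'$. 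A concrete failure: take
\[
U=\begin{pmatrix}1&1\\0&1\end{pmatrix}\in\cal{U}'(\ell),\qquad M=\begin{pmatrix}0&1\\1&0\end{pmatrix},
\]
so $M\in\cal{C'}^{i}(\ell)$ since $\car_M(X)=X^2-1$ with roots $\pm1\in\F_\ell^{\times}$; then $UM=\begin{pmatrix}1&1\\1&0\end{pmatrix}$ has trace $1\neq0$, hence $\car_{UM}(X)=X^2-X-1$ is not of the form $X^2\mp\mu$ and $UM\notin\cal{C'}^{i}(\ell)$. More generally, for $N\in\cal{U}'(\ell)$ with off-diagonal entry $a$ one has $\tr(NM)=a\,m_{21}$, so the trace-zero condition survives left multiplication only when $M$ is upper triangular. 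Thus no completion of your reduction is possible: the inclusion holds only after intersecting with the Borel.

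What you did prove, namely $UM'\in\cal{C'}^{i}(\ell)\cap\cal{B}(\ell)$ for $M'\in\cal{C'}^{i}(\ell)\cap\cal{B}(\ell)$ and $U\in\cal{U}'(\ell)$, is exactly the inclusion $\cal{U}'(\ell)\bigl(\cal{C'}^{i}(\ell)\cap\cal{B}(\ell)\bigr)\subseteq\cal{C'}^{i}(\ell)\cap\cal{B}(\ell)$ demanded by condition (\ref{N-C}) of Lemma \ref{func-ECDT} with $H=\cal{B}(\ell)$, $N=\cal{U}'(\ell)$, and that is all Section \ref{proof-QM} uses, as you correctly observe. Note that the paper's verification of Part (3) computes $\car_{NM}(X)=X^2+\mu\lambda^2=\prod_{j}(X-\lambda\lambda_j)$ for arbitrary $M\in\cal{C'}^{i}(\ell)$, an identity that, by the trace computation above, is likewise valid only for upper-triangular $M$; so the proposition should be read (and is applied) in this Borel-restricted form. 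In short: your Borel computation is the correct content and matches what the paper actually establishes; the attempted upgrade to the unrestricted inclusion is the gap, and it is unfixable as stated rather than merely unproven.
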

\begin{proof}
Since $\cal{C'}^4(\ell)=\cal{C'}^5(\ell)$, it suffices to give a proof for $i=4$. 

For (1), we take $\mu\in \F_\ell^{\times}$ such that $\mu=\lambda^2$  for some $\lambda\in \F_\ell^{\times}$. Then,  the matrix $\left(\begin{matrix}
    -\lambda & 0 \\
    0& \lambda
\end{matrix}\right)$  is an element in $\mathcal{C'}^4(\ell)$. So $\mathcal{C'}^4(\ell)$ is nonempty. The set $\cal{C'}^4(\ell)$ is  invariant under conjugation over $\GL_2(\F_\ell)$  since eigenvalues are invariant under conjugation.

(2) is a basic fact of the Jordan normal form of matrices in $\GL_2(\F_\ell)$.

 To prove part (3), we take an  element $M\in \cal{C'}^4(\ell)$. Then for any $N\in \cal{U}'(\ell)$ with diagonal entries equal to the same value $\lambda\in \F_\ell^{\times}$, we have
    \begin{align*}
      \car_{NM}(X)& = X^2 +\mu \lambda^2 = \prod_{1\leq j\leq 2}(X-\lambda\lambda_j).
    \end{align*}
   Therefore,  we have  $\cal{U}'(\ell) \cal{C'}^4(\ell)\subseteq \cal{C'}^4(\ell)$. 
\end{proof}

Finally, we set
\begin{align*}
\cal{C'}^{i}_{\mathcal{B}}(\ell) & :=\cal{C'}^i(\ell)\cap \mathcal{B}(\ell), \  i\in \{4, 5\},\\
\ol{\cal{C'}^{i}_{\mathcal{B}}(\ell)} & :=\text{image of } \cal{C'}^i_{\mathcal{B}}(\ell) \text{ in } \mathcal{B}(\ell)/\cal{U}'(\ell), \  i\in \{4, 5\}.
\end{align*}

\begin{proposition}\label{conj-size-QM}
        We have $\left|\ol{\cal{C'}^{i}_\cal{B}}(\ell)\right| \ll 1$ for $i\in \{4, 5\}$.
 \end{proposition}
\begin{proof}
This follows from \cite[Lemma 17 (iv), p. 701]{CoWa2023}.
\end{proof}

\subsection{Subgroups and conjugacy classes of \texorpdfstring{$\GSp_4(\F_\ell)$}{Lg}}\label{triv-end-group}
Fix a prime $\ell>5$. We consider specific subgroups and conjugation invariant subsets of the general symplectic group $\GSp_4(\F_\ell)$ that  naturally arise as  images of the residual modulo $\ell$ Galois representations associated to an absolutely simple abelian surface $A/K$, satisfying $\End_{\ol{K}}(A)\otimes_{\Z} \Q=\Q$. In particular,  these objects  correspond to the Galois subgroups of $K(A[\ell])/K$  and the associated conjugation invariant sets, where $K(A[\ell])$ is the $\ell$-division field of $A$.  

 We recall the block matrix definition:
\[
\GSp_{4}(\F_\ell)
=
\left\{
\begin{array}{lll}
                                  &  &-C^{t} A + A^{t} C = 0
\\
\begin{pmatrix} A & B \\  C & D \end{pmatrix} \in \GL_{4}(\F_\ell): & 
A, B, C, D \in M_2(\F_\ell), \mu \in \F_{\ell}^{\times}, & 
- C^{t}B+A^tD=\mu I 
\\
                                    & & -D^tB+B^tD=0
\end{array}\right\}
\]
 and introduce the following subsets of $\GSp_4(\F_\ell)$: 
\begin{align*}
 GB(\ell) &:=
   \left\{
  \begin{pmatrix} 
 A & \mu^{-1}AS\\
 0 & \mu (A^t)^{-1}
  \end{pmatrix}
  \in \GL_{4}(\F_\ell): 
  A \in \GL_2(\F_\ell) \text{ is an upper triangular matrix},  \right.\\
  & \hspace{5.5cm} \left.
  S \in M_2(\F_\ell) \text{ is a symmetric matrix}, \mu \in \F_{\ell}^{\times} \right\}, 
  \\
   GU(\ell) &:=
   \left\{
  \begin{pmatrix} 
 A  & AS\\
 0 & (A^t)^{-1}\\
  \end{pmatrix}
  \in \GL_{4}(\F_\ell): 
  A = \left(\begin{matrix}
    1 & a\\ 0 & 1
    \end{matrix}\right) \ a\in \F_\ell, \right.\\
    & \hspace{5.4cm} \left.
  S \in M_2(\F_\ell) \text{ is a symmetric matrix}\right\},
 \\
GU'(\ell)
 &:=
  \left\{
  \begin{pmatrix} 
 A & \mu^{-1}AS\\
 0 & \mu (A^t)^{-1}
 \end{pmatrix}
 \in \GL_{4}(\F_\ell): 
 A =\left(\begin{matrix}
    \lambda & a\\ 0 & \lambda
    \end{matrix}\right),
      a\in \F_\ell, \lambda\in \F_\ell^{\times}, \mu= \lambda^2, \right.\\
      & \hspace{5.5cm} \left.
  S \in M_2(\F_\ell) \text{ is a symmetric matrix} \right\},
  \\
   GT(\ell)
  &:=
   \left\{
  \begin{pmatrix} 
 A  & 0 \\
 0 &  \mu A^{-1} 
  \end{pmatrix}
  \in \GL_{4}(\F_\ell): 
  A \in \GL_2(\F_\ell) \text{ is diagonal},
  \mu\in \F_{\ell}^{\times} \right\}.
  \end{align*}
It is easy to check that they are all subgroups of $\GSp_4(\F_\ell)$ (see \cite[Proposition 8, p. 14]{CoWa2022} for a proof).  The following properties of these groups will be used. 
\begin{proposition}\label{gsp-prop}
Let $\ell\geq 5$. The following statements hold.
\begin{enumerate}
    \item     
    \begin{align*}
    |\GSp_4(\F_\ell)|& = (\ell-1)^3 \ell^{4}(\ell+1)^2(\ell^{2}+1),\\
            |GB(\ell)| &=  \ell^{4}(\ell-1)^{3},\\
            |GU(\ell)| &= \ell^4,\\
            |GU'(\ell)| & = \ell^4(\ell-1), \\
            | GT(\ell)| & = (\ell-1)^3.
        \end{align*}
 \label{group-prop-6}
 \item $GU(\ell)$ and $GU'(\ell)$ are normal subgroups of $GB(\ell)$. \label{group-prop-4}
 \item The quotients $GB(\ell)/GU(\ell)$ and $GB(\ell)/GU'(\ell)$ are abelian. Moreover, we have the isomorphism $GB(\ell)/GU(\ell)\simeq GT(\ell)$.\label{group-prop-5}
 \item $GB(\ell)$ is a Borel subgroup of $\GSp_4(\F_\ell)$,  $GU(\ell)$ is a unipotent subgroup of $\GSp_4(\F_\ell)$, and
 $GT(\ell)$ is a maximal torus of $\GSp_4(\F_\ell)$. \label{group-prop-7}
\end{enumerate}
\end{proposition}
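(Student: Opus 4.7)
The plan is to verify the four parts in order. Parts (\ref{group-prop-6})--(\ref{group-prop-5}) reduce to direct block-matrix computations, while part (\ref{group-prop-7}) appeals to the standard theory of reductive algebraic groups; as noted for the companion Proposition \ref{group-prop-QM}, the underlying calculations are already essentially recorded in the author's prior work, so my job is largely one of organization.

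For the cardinality formulas in (\ref{group-prop-6}), I would count parameters directly. In $GB(\ell)$, the upper triangular $A\in\GL_2(\F_\ell)$ contributes $(\ell-1)^2\ell$ choices, the symmetric $2\times 2$ matrix $S$ contributes $\ell^3$ choices, and $\mu\in\F_\ell^\times$ contributes $\ell-1$ choices, totalling $\ell^4(\ell-1)^3$. The formulas for $|GU(\ell)|$, $|GU'(\ell)|$, and $|GT(\ell)|$ follow by identical parameter counts (after observing that in $GU'(\ell)$ the condition $\mu=\lambda^2$ means $\mu$ is determined by $\lambda$). For $|\GSp_4(\F_\ell)|$, I would combine the standard identity $|\GSp_{2n}(\F_\ell)|=(\ell-1)\cdot|\Sp_{2n}(\F_\ell)|$ with $|\Sp_4(\F_\ell)|=\ell^4(\ell^2-1)(\ell^4-1)$ and factor.

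Before addressing (\ref{group-prop-4}) and (\ref{group-prop-5}), I would first confirm that the given block parametrization does produce elements of $\GSp_4(\F_\ell)$: with $C=0$, the identity $-C^tA+A^tC=0$ is automatic, the identity $-C^tB+A^tD=\mu I$ forces $D=\mu(A^t)^{-1}$, and the identity $-D^tB+B^tD=0$ collapses to $S=S^t$, which is exactly the stated symmetry constraint. Having justified the parametrization, multiplying two elements of $GB(\ell)$ gives
\[
\begin{pmatrix} A_1 & B_1 \\ 0 & D_1\end{pmatrix}\begin{pmatrix} A_2 & B_2 \\ 0 & D_2\end{pmatrix}=\begin{pmatrix} A_1A_2 & A_1B_2+B_1D_2 \\ 0 & D_1D_2\end{pmatrix},
\]
so the assignment $\varphi:GB(\ell)\to \cal{B}(\ell)\times\F_\ell^\times$ sending a matrix to the pair $(A,\mu)$ is a group homomorphism. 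Composing $\varphi$ with the abelianization $\cal{B}(\ell)\twoheadrightarrow \cal{T}(\ell)$ from Proposition \ref{group-prop-QM}(\ref{group-prop-2-QM}) yields a surjection onto the abelian group $\cal{T}(\ell)\times\F_\ell^\times$ whose kernel is precisely the set of elements with $A=I$ and $\mu=1$, that is, $GU(\ell)$. This simultaneously exhibits $GU(\ell)$ as a normal subgroup and gives an abelian quotient $GB(\ell)/GU(\ell)$; the cardinality identity $(\ell-1)^3=|GT(\ell)|$ then upgrades this to the claimed isomorphism $GB(\ell)/GU(\ell)\simeq GT(\ell)$. The normality of $GU'(\ell)$ and the abelianness of $GB(\ell)/GU'(\ell)$ follow by the same argument after enlarging the kernel to allow scalar $A$ and $\mu=\lambda^2$.

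For part (\ref{group-prop-7}), I would invoke that $\GSp_4$ is a split reductive algebraic group. The subgroup $GB(\ell)$ is the stabilizer of the complete isotropic flag $\langle e_1\rangle\subset\langle e_1,e_2\rangle$ for the symplectic form associated to $J$, hence a Borel subgroup; $GT(\ell)$ is the common stabilizer of the two Lagrangian coordinate planes $\langle e_1,e_2\rangle$ and $\langle e_3,e_4\rangle$ together with the individual coordinate axes, hence a maximal torus; and $GU(\ell)$, sitting inside $GB(\ell)$ with all diagonal entries equal to $1$, is unipotent. The main bookkeeping obstacle throughout is tracking the symmetry of $S$ under multiplication, but this is automatic once the block-multiplication formula is in hand, and the rest of the argument essentially mirrors the proof of Proposition \ref{group-prop-QM}.
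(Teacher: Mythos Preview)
Your proposal is correct and, for parts (\ref{group-prop-6})--(\ref{group-prop-5}), is essentially what the paper does: the paper simply cites \cite[Propositions 8, 9, 11]{CoWa2022} for these direct block-matrix computations, and your explicit parameter counts and the homomorphism $\varphi$ to $(A,\mu)$ are precisely the content of that reference. One small slip: after composing with $\cal{B}(\ell)\twoheadrightarrow\cal{T}(\ell)$, the kernel consists of elements with $A\in\cal{U}(\ell)$ (not $A=I$) and $\mu=1$; this is indeed $GU(\ell)$, so your identification is right even though the intermediate description is misstated.

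For part (\ref{group-prop-7}) the two arguments genuinely diverge. The paper argues indirectly: since all Borel subgroups (resp.\ unipotent radicals, maximal tori) of $\GSp_4(\F_\ell)$ are conjugate, it suffices to check that $|GB(\ell)|$, $|GU(\ell)|$, $|GT(\ell)|$ match the orders of the ``standard'' such subgroups listed in \cite[p.~308]{Br2015}. Your route is instead a direct structural identification---$GB(\ell)$ as the stabilizer of the isotropic flag $\langle e_1\rangle\subset\langle e_1,e_2\rangle$, $GT(\ell)$ as the diagonal torus, $GU(\ell)$ as unipotent by inspection. Your argument is more self-contained and shows \emph{which} Borel (etc.) one has, not merely that it is one; the paper's approach is quicker if one is willing to import the standard orders from the literature. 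Both are entirely adequate here.
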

\begin{proof}
See \cite[Proposition 8, p. 14, Proposition 9, p. 16, and Proposition 11, p. 17]{CoWa2022} for part (\ref{group-prop-6})--(\ref{group-prop-5}). Part (4) follows from the fact that  Borel subgroups (resp. unipotent subgroups and maximal torus) of $\GSp_4(\F_\ell)$ are conjugate with each other. Then, we can compare the  cardinality of $|GB(\ell)|$, $|GU(\ell)|$, and $|GT(\ell)|$ with the ``standard" ones such as  those in \cite[p. 308]{Br2015}.
\end{proof}


Next, we define several conjugation invariant sets of $\GSp_4(\F_\ell)$.
\begin{align}
    G\cal{C}^1(\ell) & := 
    \left\{M\in \GSp_4(\F_\ell): \car_M(X) = X^4+\mu X^2+\mu^2\right. \nonumber \\
    & \hspace{5.45cm} =\prod_{1\leq j\leq 2}(X-\lambda_j)(X-\mu \lambda_j^{-1}),  \mu, \lambda_j \in \F_\ell^{\times}\}, \label{G-conj-1}\\
    G\cal{C}^2(\ell) & := 
    \left\{M\in \GSp_4(\F_\ell): \car_M(X) = X^4-\mu X^2+\mu^2 \right. \nonumber \\
    & \hspace{5.45cm} =\prod_{1\leq j\leq 2}(X-\lambda_j)(X-\mu \lambda_j^{-1}), \mu, \lambda_j \in \F_\ell^{\times}\}, \label{G-conj-2}\\
    G\cal{C}^3(\ell) & := 
    \left\{M\in \GSp_4(\F_\ell): \car_M(X) = X^4  +\mu^2 \right. \nonumber \\
    & \hspace{5.45cm} =\prod_{1\leq j\leq 2}(X-\lambda_j)(X-\mu \lambda_j^{-1}), \mu, \lambda_j \in \F_\ell^{\times}\}, \label{G-conj-3}\\
     G\cal{C}^4(\ell) & := 
    \left\{M\in \GSp_4(\F_\ell): \car_M(X) = (X^2-\mu)^2 \right. \nonumber \\
    & \hspace{5.45cm} =\prod_{1\leq j\leq 2}(X-\lambda_j)(X-\mu \lambda_j^{-1}), \mu, \lambda_j \in \F_\ell^{\times}\}, \label{G-conj-4}\\
    G\cal{C}^5(\ell) & := 
    \left\{M\in \GSp_4(\F_\ell): \car_M(X) = (X^2+\mu)^2 \right. \nonumber \\
    & \hspace{5.45cm} =\prod_{1\leq j\leq 2}(X-\lambda_j)(X-\mu \lambda_j^{-1}), \mu, \lambda_j \in \F_\ell^{\times}\}.\label{G-conj-5} 
\end{align}
 Similar to  \Cref{conj-class-GSp}, the sets (\ref{G-conj-1}) and (\ref{G-conj-2}) (resp. (\ref{G-conj-4}) and (\ref{G-conj-5})) are  the same.  Distinguishing their name will make the argument   in  \Cref{thm-proof-triv}  easier.

\begin{proposition}\label{gsp-conj-prop}
  Let $\ell$ be an odd prime such that 
$\kronecker{-1}{\ell}=\kronecker{2}{\ell}=\kronecker{3}{\ell}=1
$. For each $1\leq i\leq 5$, the following statements hold.  
  \begin{enumerate}
    \item $G\cal{C}^i(\ell)$ is nonempty and  is  invariant under conjugation in  $\GSp_4(\F_\ell)$. \label{conj-prop-4}
    \item Each element of $G\cal{C}^i(\ell)$ is conjugate over $\GSp_4(\F_\ell)$ to some element of $GB(\ell)$.  \label{conj-prop-5}
    \item We have $GU'(\ell) G\cal{C}^i(\ell)\subseteq G\cal{C}^i(\ell)$. \label{conj-prop-6}
\end{enumerate}
\end{proposition}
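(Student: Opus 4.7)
The plan is to adapt the structure of the $\GL_2$-analogue Proposition \ref{conj-prop-QM} to the $\GSp_4(\F_\ell)$ setting. For part (1), conjugation invariance of each $G\cal{C}^i(\ell)$ is immediate since the set is cut out by a condition on the characteristic polynomial. To prove non-emptiness, I would exhibit an element of $G\cal{C}^i(\ell)\cap GT(\ell)$ for each $i$. Parametrizing a $\GSp_4$-Weil polynomial with similitude $\mu$ as a product of two quadratics $(X^2 - sX + \mu)(X^2 - tX + \mu)$, the condition of lying in $G\cal{C}^i(\ell)$ reduces to $s + t = 0$ together with an explicit relation between $s^2$ and $\mu$. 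Solving these with $\mu$ chosen to be a nonzero square (e.g.\ $\mu = 1$) produces four roots in $\F_\ell^\times$ provided certain Legendre-symbol conditions hold; the hypotheses $\kronecker{-1}{\ell} = \kronecker{2}{\ell} = \kronecker{3}{\ell} = 1$ cover all five cases (for instance, $G\cal{C}^1$ forces $s^2 = \mu$ and requires $-3\mu$ to be a square, i.e.\ $\kronecker{-3}{\ell} = \kronecker{-1}{\ell}\kronecker{3}{\ell} = 1$). Assembling the four roots as the diagonal of a matrix in $GT(\ell)$, with paired entries multiplying to $\mu$, gives the desired element.

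For part (2), any $M \in G\cal{C}^i(\ell)$ has all four eigenvalues in $\F_\ell^\times$, so $M$ admits an eigenvector $v_1 \in \F_\ell^4$; the line $\langle v_1 \rangle$ is automatically isotropic. The induced $M$-action on the symplectic quotient $\langle v_1 \rangle^\perp / \langle v_1 \rangle$ (a $2$-dimensional non-degenerate symplectic space) again has an eigenvector, whose lift $v_2 \in \langle v_1 \rangle^\perp$ yields an $M$-stable Lagrangian plane $\langle v_1, v_2 \rangle$. Completing $(v_1, v_2)$ to a symplectic basis of $\F_\ell^4$ and conjugating $M$ by the corresponding change-of-basis matrix lands $M$ in the stabilizer of the standard Lagrangian flag, which is precisely $GB(\ell)$.

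For part (3), the key simplification is to factor $N = \lambda I_4 \cdot N'$, where $N' := \lambda^{-1} N$ lies in $GU(\ell)$ and is genuinely unipotent (its similitude being $1$). Since $\car_{NM}(X) = \lambda^4 \car_{N'M}(X/\lambda)$, the shape $X^4 + cX^2 + c^2$ is stable under this rescaling with $c \mapsto c\lambda^2$, so it suffices to verify that $\car_{N'M}$ retains the required shape. By part (2) I reduce to $M \in G\cal{C}^i(\ell)\cap GB(\ell)$, so that $M$ is block upper triangular with an upper triangular $A$-block $A_M$ and a lower triangular $D$-block $D_M$. A direct block computation then shows that $N'M$ is again block upper triangular, with diagonal blocks $A'A_M$ (upper triangular with the same diagonal as $A_M$, because $A' - I_2$ is strictly upper triangular) and $D'D_M$ (lower triangular with the same diagonal as $D_M$, because $D' - I_2$ is strictly lower triangular). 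Consequently $\car_{N'M}(X) = \car_M(X)$, and $N'M \in G\cal{C}^i(\ell)$. The main subtlety I anticipate is that invoking part (2) to bring $M$ into $GB(\ell)$ also conjugates $N$, potentially removing it from $GU'(\ell)$; the right interpretation is that the proof directly establishes the refined containment $GU'(\ell)\bigl(G\cal{C}^i(\ell)\cap GB(\ell)\bigr) \subseteq G\cal{C}^i(\ell)\cap GB(\ell)$, which is precisely the input required to verify hypothesis (2) of Lemma \ref{func-ECDT} when applied with $H = GB(\ell)$ and $N = GU'(\ell)$.
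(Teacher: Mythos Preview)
Your argument is correct, and in one place more careful than the paper's.

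For part~(1) your construction of a witness in $GT(\ell)$ is the same idea as the paper's (which builds a block-diagonal element from two $2\times 2$ matrices with prescribed quadratic characteristic polynomials); the Legendre-symbol bookkeeping is identical.

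For part~(2) the paper takes a different route: it cites Breeding's table of $\GSp_4(\F_\ell)$-conjugacy classes \cite[Table~1]{Br2015} and observes that every class whose characteristic polynomial splits has a representative in the Borel. Your direct symplectic-flag argument (eigenvector $v_1$, then eigenvector in $\langle v_1\rangle^\perp/\langle v_1\rangle$) is self-contained and more elementary; it avoids the external dependence at the cost of a short linear-algebra computation. Both are valid.

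For part~(3) your caution is well placed. The paper simply asserts $\car_{NM}(X)=\prod_j(X-\lambda\lambda_j)$ for arbitrary $M\in G\cal{C}^i(\ell)$ and $N\in GU'(\ell)$, but this identity is only true when $M$ already lies in $GB(\ell)$ (so that $N$ and $M$ are simultaneously block upper triangular). In fact the literal statement $GU'(\ell)\,G\cal{C}^i(\ell)\subseteq G\cal{C}^i(\ell)$ fails: taking $\mu$ a nonzero square, the matrix $M=\left(\begin{smallmatrix}0&I_2\\ \mu I_2&0\end{smallmatrix}\right)\in G\cal{C}^4(\ell)$ and $N'=\left(\begin{smallmatrix}A'&0\\0&(A'^t)^{-1}\end{smallmatrix}\right)\in GU(\ell)$ with $A'=\left(\begin{smallmatrix}1&1\\0&1\end{smallmatrix}\right)$ give $\car_{N'M}(X)=X^4-\mu X^2+\mu^2\ne (X^2-\mu)^2$. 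Your block computation establishes exactly the corrected containment $GU'(\ell)\bigl(G\cal{C}^i(\ell)\cap GB(\ell)\bigr)\subseteq G\cal{C}^i(\ell)\cap GB(\ell)$, and as you note this is precisely hypothesis~(\ref{N-C}) of Lemma~\ref{func-ECDT}, so nothing downstream is affected.
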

\begin{proof}
   For (1), the conjugation invariant properties are obvious since eigenvalues of any element in $\GSp_4(\F_\ell)$ is invariant under conjugation. 
   
   For each  $1\leq i\leq 5$ and $\ell$ satisfying the assumption of this proposition, we will prove the nonemptiness of $G\cal{C}^i(\ell)$ by constructing an element explicitly. To show the nonemptiness of $G\cal{C}^1(\ell)$, we choose $\mu\in \F_\ell^{\times}$ such that there exists an element $a\in \F_\ell^{\times}$ satisfying $a^2=\mu$.   Then, we  find  matrices  $M_1, M_2\in \GL_2(\F_\ell)$ whose characteristic polynomials  are $X^2+aX+\mu$ and $X^2-aX+\mu$, respectively. These polynomials split over $\F_\ell$ since $\kronecker{-3}{\ell}=1$ and
$
a^2-4\mu=-3\mu 
$
is a square. 
Therefore,  the matrix
\[
\begin{pmatrix} 
M_1 & 0 \\ 0 & M_2\end{pmatrix}
\]
is in $G\mathcal{C}^1(\ell)$.

For the nonemptiness of $G\cal{C}^2(\ell)$, we choose $\mu\in \F_\ell^{\times}$ such that there are $a, b\in \F_\ell^{\times}$ satisfying $a^2=3\mu$ and $b^2=-\mu$.  Then, we can find matrices $M_1, M_2\in \GL_2(\F_\ell)$ whose characteristic polynomials are $X^2+aX+\mu$ and $X^2-aX+\mu$, respectively. These polynomials splits over $\F_\ell$ because $a^2-4\mu=-\mu$ is a square. Then  the matrix  $\begin{pmatrix} 
M_1 & 0 \\ 0 & M_2\end{pmatrix}$ is in $ G\cal{C}^2(\ell)$. 

For $G\cal{C}^3(\ell)$, we take $\mu\in \F_\ell^{\times}$ such that there are $a, b \in \F_\ell^{\times}$ satisfying $a^2=2\mu$ and $b^2=-2\mu$;  for $G\cal{C}^4(\ell)$, we take $\mu\in \F_\ell^{\times}$ such that there are $a, b \in \F_\ell^{\times}$  satisfying  $a^2=4\mu$ and $b=0$;  for $G\cal{C}^5(\ell)$, we take $\mu\in \F_\ell^{\times}$  such that  there are  $a, b \in \F_\ell^{\times}$  satisfying
 $a=0$ and $b=-4\mu$. In each case, find $M_1, M_2\in \GL_2(\F_\ell)$ whose characteristic polynomials are $X^2+aX+\mu$ and $X^2-aX+\mu$, respectively. It is a routine to check these polynomials splits over $\F_\ell$ and the the matrix  $\begin{pmatrix} 
M_1 & 0 \\ 0 & M_2\end{pmatrix}$ is the one we construct.

   To show (2), we  use the list  of  conjugacy classes for $\GSp_4(\F_\ell)$ in \cite[Table 1, pp. 341-346]{Br2015}. From the table, by computing the characteristic polynomial of each conjugacy class, we conclude that if the characteristic polynomial of a matrix $M\in \GSp_4(\F_\ell)$ split into linear polynomials in $\F_\ell[X]$, then $M$ lies in the conjugacy class represented by an element in the Borel subgroup of $\GSp_4(\F_\ell)$. 
   Therefore, $M$ is conjugate to an element in $GB(\ell)$ by  \Cref{gsp-prop} (\ref{group-prop-7}).
   Since the  characteristic polynomial of each element  in $G\cal{C}^i(\ell)$  splits into linear factors,  the element must conjugate to an element in  $GB(\ell)$.

 To show (3), we take an  element $M\in G\cal{C}^i(\ell)$ and any $N\in GU'(\ell)$. Let $\lambda$ be the common diagonal entry of $N$ in $\F_\ell^{\times}$. Consequently, 
    \begin{align}
      \car_{NM}(X)= \prod_{1\leq j\leq 4}(X-\lambda\lambda_j) =X^4 +\mu \lambda^2X^2 + (\mu\lambda^2)^2, \; \quad  i=1,\\
      \car_{NM}(X)= \prod_{1\leq j\leq 4}(X-\lambda\lambda_j)= X^4 - \mu \lambda^2X^2 + (\mu\lambda^2)^2,  \; \quad  i=2,\\
      \car_{NM}(X) = \prod_{1\leq j\leq 4}(X-\lambda\lambda_j)= X^4 + (\mu\lambda^2)^2,  \; \quad  i=3,\\
      \car_{NM}(X)= \prod_{1\leq j\leq 4}(X-\lambda\lambda_j)= (X^2 -\mu \lambda^2)^2,   \;  \quad   i=4, \\
      \car_{NM}(X) = \prod_{1\leq j\leq 4}(X-\lambda\lambda_j)= (X^2 +\mu \lambda^2)^2,  \; \quad  i=5.
    \end{align}
   We observe that in each case,  the inclusion  $GU'(\ell) G\cal{C}^i(\ell)\subseteq G\cal{C}^i(\ell)$ holds. 
   
\end{proof}

We also need to consider the subsets associated to $G\cal{C}^i(\ell)$. 
\begin{align*}
G\cal{C}^i_B(\ell) &:=G\cal{C}^i(\ell)\cap GB(\ell), \  1\leq i\leq 5,\\
\ol{G\cal{C}}^i_B(\ell) & :=\text{image of } G\cal{C}_B^i(\ell) \text{ in } GB(\ell)/GU'(\ell), \  1\leq i\leq 5.
\end{align*}

The  following proposition shows that the cardinality of $\ol{G\cal{C}}_B^i(\ell)$ is bounded independent of $\ell$.
\begin{proposition}\label{gsp-conj-size}
   We have $|\ol{G\cal{C}}_B^i(\ell)| \ll 1$ for $1\leq i\leq 5$.
\end{proposition}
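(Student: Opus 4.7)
The plan is to exploit the explicit block upper triangular structure of $GB(\ell)$ to compute the characteristic polynomial of a generic element, identify the quotient $GB(\ell)/GU'(\ell)$ with a concrete orbit space, and then reduce the problem for each of the five prescribed characteristic polynomial shapes to solving a single quadratic.

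First I would observe that an element $M \in GB(\ell)$, with blocks $A = \begin{pmatrix} \lambda_1 & a \\ 0 & \lambda_2 \end{pmatrix}$, symmetric $S$, and similitude $\mu \in \F_\ell^\times$, is block upper triangular, so
\[
\car_M(X) = \det(XI_2 - A)\det(XI_2 - \mu (A^t)^{-1}) = (X - \lambda_1)(X - \lambda_2)(X - \mu/\lambda_1)(X - \mu/\lambda_2).
\]
In particular $\car_M$ depends only on the triple $(\lambda_1, \lambda_2, \mu) \in (\F_\ell^\times)^3$. Using the identification $GB(\ell)/GU(\ell) \simeq GT(\ell)$ from Proposition \ref{gsp-prop}(\ref{group-prop-5}) and tracking the image of $GU'(\ell)/GU(\ell)$ inside it, I would parameterize $GB(\ell)/GU'(\ell)$ by the orbit space of $(\F_\ell^\times)^3$ under the scaling action
\[
(\lambda_1, \lambda_2, \mu) \mapsto (\lambda\lambda_1, \lambda\lambda_2, \lambda^2 \mu), \quad \lambda \in \F_\ell^\times.
\]
The condition $M \in G\cal{C}^i_B(\ell)$ is invariant under this scaling, since it replaces $\car_M(X)$ by $\lambda^4 \car_M(X/\lambda) = X^4 + c_2(\mu)\lambda^2 X^2 + \mu^2 \lambda^4$, which is again of the prescribed form with $\mu$ replaced by $\lambda^2\mu$.

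For each $i\in\{1,\dots,5\}$ the prescribed polynomial is of the form $X^4+c_2(\mu)X^2+\mu^2$ with $c_2(\mu)\in\{\mu,-\mu,0,-2\mu,2\mu\}$. Matching the vanishing $X^3$ coefficient against $\car_M$ yields $(\lambda_1+\lambda_2)(1+\mu/(\lambda_1\lambda_2))=0$, forcing either (A) $\lambda_2=-\lambda_1$ or (B) $\lambda_1\lambda_2=-\mu$. In sub-case (A), setting $\alpha:=\lambda_1^2$, matching the $X^2$ coefficient gives $\alpha^2+c_2(\mu)\alpha+\mu^2=0$, equivalently $\nu^2+(c_2(\mu)/\mu)\nu+1=0$ with $\nu:=\lambda_1^2/\mu$, a scaling-invariant. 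This fixed quadratic has at most two roots in $\F_\ell$, and a direct check using $(\lambda_1,-\lambda_1,\mu)\sim(\lambda_1',-\lambda_1',\mu')$ iff $\lambda_1'^2/\mu'=\lambda_1^2/\mu$ shows that each value of $\nu$ determines a single orbit. Sub-case (B) is handled identically using the invariant $u:=\lambda_1/\lambda_2$, which satisfies an analogous quadratic with at most two solutions. Summing the two sub-cases gives $|\ol{G\cal{C}}^i_B(\ell)|\leq 4$ for every $\ell$, which is the required bound.

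The only real difficulty is organizational rather than mathematical: one must keep track of five prescribed shapes, two sub-cases each, and verify that the chosen scaling-invariants parameterize orbits uniquely. No input deeper than the block matrix computation and elementary quadratic counting is required.
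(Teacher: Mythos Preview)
Your proposal is correct and follows essentially the same idea as the paper's proof: both use the identification $GB(\ell)/GU(\ell)\simeq GT(\ell)$ to reduce to counting triples $(\lambda_1,\lambda_2,\mu)\in(\F_\ell^\times)^3$ satisfying the coefficient conditions coming from the prescribed characteristic polynomial shapes.

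The execution differs slightly. The paper stays in $GB(\ell)/GU(\ell)$, bounds the preimage $\ol{G\cal{D}}^i_B(\ell)$ there by $\ll\ell$ via a direct case split on whether $\lambda_1^{-1}+\lambda_2^{-1}=0$, and then divides by $|GU'(\ell)/GU(\ell)|=\ell-1$ at the end. You instead pass immediately to the smaller quotient $GB(\ell)/GU'(\ell)$, realized as the orbit space of $(\F_\ell^\times)^3$ under the diagonal $\F_\ell^\times$-scaling, and count orbits directly via the scaling-invariants $\nu=\lambda_1^2/\mu$ and $u=\lambda_1/\lambda_2$. Your route is a bit cleaner and yields the explicit bound $|\ol{G\cal{C}}^i_B(\ell)|\leq 4$, whereas the paper only records $\ll 1$; the paper's route avoids having to verify that the invariants parameterize orbits bijectively. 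Neither approach requires any input beyond the block-triangular characteristic polynomial computation and elementary counting.
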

\begin{proof}
For each  $1\leq i\leq 5$ we consider the sets 
    \[
    \ol{G\cal{D}}_B^i(\ell) := \text{image of }
    G\cal{C}_B^i(\ell) \text{ in } GB(\ell)/GU(\ell).
    \]
    Take  $M\in G\cal{C}_B^i(\ell)$. Since $GB(\ell)/GU(\ell)\simeq GT(\ell)$, the matrix $M$ is uniquely determined by its eigenvalues $\lambda_1, \lambda_2, \mu\lambda_1^{-1}, \mu\lambda_2^{-1}\in  \F_\ell^{\times}$. Comparing the coefficients  in each terms of  (\ref{G-conj-1})-(\ref{G-conj-5}), we get the following estimations.
     \begin{align*}
        |\ol{G\cal{D}}_B^1(\ell)| & \leq \sum_{\lambda_1, \lambda_2\in \F_\ell^{\times}} \#\{\mu \in \F_\ell^{\times}: \sum_{1\leq j\leq 2}\lambda_j +\mu \lambda_j^{-1} =0,  
    \lambda_1\lambda_2 +
    \mu \lambda_1\lambda_2^{-1}
    +
    \mu \lambda_2\lambda_1^{-1}
    + 2\mu 
    =\mu\}\\
    & \leq \left(\sum_{\substack{\lambda_1, \lambda_2\in \F_\ell^{\times}\\ \lambda_1^{-1}+\lambda_2^{-1}=0}}1\right)+
   \left( \sum_{\substack{\lambda_1, \lambda_2\in \F_\ell^{\times}\\ \lambda_1^{-1}+\lambda_2^{-1}\neq 0, \; \lambda_1+\lambda_2\neq 0\\
    \lambda_1\lambda_2(\lambda_1^{-1}+\lambda_2^{-1})=(\lambda_1\lambda_2^{-1}+\lambda_1^{-1}\lambda_2+1)(\lambda_1+\lambda_2)}}1\right) \\
    & \leq \left(\sum_{\substack{\lambda_1, \lambda_2\in \F_\ell^{\times}\\ \lambda_1^{-1}+\lambda_2^{-1}=0}}1\right)
    + \left(\sum_{\substack{\lambda_1, \lambda_2\in \F_\ell^{\times}\\ \lambda_1\lambda_2^{-1}+\lambda_1^{-1}\lambda_2+1=1}}1\right)
    \ll \ell.
\end{align*}

Similarly, we have 
\begin{align*}
         |\ol{G\cal{D}}_B^2(\ell)| & \leq \sum_{\lambda_1, \lambda_2\in \F_\ell^{\times}} \#\{\mu \in \F_\ell^{\times}: \sum_{1\leq i\leq 2}\lambda_i +\mu \lambda_i^{-1} =0,  
    \lambda_1\lambda_2 +
    \mu \lambda_1\lambda_2^{-1}+
    \mu \lambda_2\lambda_1^{-1}
    + 2\mu 
    =-\mu\},\\
        & \leq \left(\sum_{\substack{\lambda_1, \lambda_2\in \F_\ell^{\times}\\ \lambda_1^{-1}+\lambda_2^{-1}=0}}1\right)
    + \left(\sum_{\substack{\lambda_1, \lambda_2\in \F_\ell^{\times}\\ 
    \lambda_1\lambda_2^{-1}+\lambda_1^{-1}\lambda_2+3=1}}1\right) \ll \ell,\\
        |\ol{G\cal{D}}_B^3(\ell)| & \leq \sum_{\lambda_1, \lambda_2\in \F_\ell^{\times}} \#\{\mu \in \F_\ell^{\times}: \sum_{1\leq i\leq 2}\lambda_i +\mu \lambda_i^{-1} =0,  
    \lambda_1\lambda_2 +
    \mu \lambda_1\lambda_2^{-1}+
    \mu \lambda_2\lambda_1^{-1}
    + 2\mu 
    =0\}  \\
    & \leq \left(\sum_{\substack{\lambda_1, \lambda_2\in \F_\ell^{\times}\\ \lambda_1^{-1}+\lambda_2^{-1}=0}}1\right)
    + \left(\sum_{\substack{\lambda_1, \lambda_2\in \F_\ell^{\times}\\ \lambda_1\lambda_2^{-1}+\lambda_1^{-1}\lambda_2+2=1}}1\right)
 \ll \ell, \\
   |\ol{G\cal{D}}_B^4(\ell)| & \leq \sum_{\lambda_1, \lambda_2\in \F_\ell^{\times}} \#\{\mu \in \F_\ell^{\times}: \sum_{1\leq i\leq 2}\lambda_i +\mu \lambda_i^{-1} =0,  
    \lambda_1\lambda_2 +
    \mu \lambda_1\lambda_2^{-1}+
    \mu \lambda_2\lambda_1^{-1}
    + 2\mu 
    =-2\mu\}\\
   & \leq \left(\sum_{\substack{\lambda_1, \lambda_2\in \F_\ell^{\times}\\ \lambda_1^{-1}+\lambda_2^{-1}=0}}1\right)
    + \left(\sum_{\substack{\lambda_1, \lambda_2\in \F_\ell^{\times}\\ \lambda_1\lambda_2^{-1}+\lambda_1^{-1}\lambda_2+4=1}}1\right) \ll \ell, 
      \end{align*}
     \begin{align*}
     |\ol{G\cal{D}}_B^5(\ell)| & \leq \sum_{\lambda_1, \lambda_2\in \F_\ell^{\times}} \#\{\mu \in \F_\ell^{\times}: \sum_{1\leq i\leq 2}\lambda_i +\mu \lambda_i^{-1} =0,  
    \lambda_1\lambda_2 +
    \mu \lambda_1\lambda_2^{-1}+
    \mu \lambda_2\lambda_1^{-1}
    + 2\mu 
    =2\mu\}\\
    &  
    \leq \left(\sum_{\substack{\lambda_1, \lambda_2\in \F_\ell^{\times}\\ \lambda_1^{-1}+\lambda_2^{-1}=0}}1\right)
    + \left(\sum_{\substack{\lambda_1, \lambda_2\in \F_\ell^{\times}\\ \lambda_1\lambda_2^{-1}+\lambda_1^{-1}\lambda_2=1}}1\right) \ll \ell.
        \end{align*}
Since the inverse image of $\ol{G\cal{C}}_B^i(\ell) \subseteq GB(\ell)/GU(\ell)$ under the quotient map $GB(\ell)/GU(\ell) \to GB(\ell)/GU'(\ell)$ is exactly $\ol{G\cal{D}}_B^i(\ell)$,  the desired  bounds follow from the fact that
\[
|\ol{G\cal{C}}_B^i(\ell)|=\frac{|\ol{G\cal{D}}_B^i(\ell)|}{|GU'(\ell)/GU(\ell)|}\ll 1, \;  1\leq i\leq 5
\]
\end{proof}

   \subsection{Subgroups and conjugacy classes of \texorpdfstring{$\GL_2(\F_\ell)\times  \GL_2(\F_\ell)$}{Lg}}\label{G-ell-section}
    
    Now we focus on the subsets of
\[
 G(\ell)  := 
    \{(M_1, M_2)\in \GL_2(\F_\ell)\times\GL_2(\F_\ell): \det M_1=\det M_2\}.
\]
We consider the following subgroups of $G(\ell)$. 
\begin{align*}
   B(\ell) & :=
    \{(M_1, M_2)\in G(\ell): M_1  \text{ and } M_2 \text{ are upper triangular}\},\\
    U(\ell) & := 
    \left\{(M_1, M_2)\in G(\ell): M_1=\left(\begin{matrix}
    1 & a_1\\ 0 & 1
    \end{matrix}\right), 
   M_2=\left(\begin{matrix}
    1 & a_2\\ 0 & 1
    \end{matrix}\right), a_1, a_2\in \F_\ell \right\}\\
    U'(\ell) & := 
    \left\{(M_1, M_2)\in G(\ell): M_1=\left(\begin{matrix}
    \lambda & a_1\\ 0 & \lambda
    \end{matrix}\right), 
   M_2=\left(\begin{matrix}
    \lambda  & a_2\\ 0 & \lambda 
    \end{matrix}\right), a_1, a_2\in \F_\ell,\lambda 
 \in\F_\ell^{\times} \right\}, \\
    T(\ell) & := \{(M_1, M_2)\in G(\ell): M_1  \text{ and } M_2 \text{ are diagonal}\},
\end{align*}

We will use the following properties of these groups.
\begin{proposition}\label{group-prop-RM}
Let $\ell\geq 5$. The following statements hold.
    \begin{enumerate}
     \item 
        \begin{align*}
            |G(\ell)| &= (\ell-1)^3\ell^2(\ell+1)^2,\\
            |B(\ell)| & = (\ell-1)^3\ell^2,\\
            |U(\ell)| &= \ell^2,\\
            |U'(\ell)| & = \ell^2(\ell-1),\\
             |T(\ell)| & = (\ell-1)^3.
        \end{align*}
        \label{group-prop-3-RM}
        \item $U(\ell)$ and $U'(\ell)$ are normal subgroups of $B(\ell)$. \label{group-prop-1-RM}
        \item The quotient groups $B(\ell)/U(\ell)$ and $B(\ell)/U'(\ell)$ are abelian. Moreover, we have the isomorphism  $B(\ell)/U(\ell)\simeq T(\ell)$.\label{group-prop-2-RM}
        \end{enumerate} 
\end{proposition}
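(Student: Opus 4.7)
The overall strategy is to reduce each assertion to the corresponding statement about subgroups of $\GL_2(\F_\ell)$ proved in Proposition \ref{group-prop-QM}, exploiting the fact that $G(\ell) = \GL_2(\F_\ell)\times_{\det}\GL_2(\F_\ell)$ is a fiber product and that $B(\ell)$, $T(\ell)$ are analogous fiber products of $\cal{B}(\ell)$ and $\cal{T}(\ell)$, respectively. The only genuinely new object is $U'(\ell)$, whose definition forces the common scalar $\lambda$ to agree across the two components, but this extra coupling is mild.

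For part (\ref{group-prop-3-RM}), the cardinalities $|G(\ell)|$, $|B(\ell)|$, $|T(\ell)|$ are computed by noting that each is the kernel of the surjective homomorphism from the ambient product onto $\F_\ell^{\times}$ given by $(M_1, M_2)\mapsto \det(M_1)\det(M_2)^{-1}$, so their orders equal $|\cal{H}(\ell)|^2/(\ell-1)$ for $\cal{H}\in\{\GL_2, \cal{B}, \cal{T}\}$; substituting the orders from Proposition \ref{group-prop-QM}(\ref{group-prop-3-QM}) gives the claimed values. For $U(\ell)$, the determinant condition is automatic (both factors have determinant $1$), so $U(\ell)=\cal{U}(\ell)\times \cal{U}(\ell)$ has order $\ell^2$. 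For $U'(\ell)$, a direct parameter count in $(\lambda, a_1, a_2)\in \F_\ell^{\times}\times \F_\ell\times \F_\ell$ gives $\ell^2(\ell-1)$.

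For part (\ref{group-prop-1-RM}), the normality of $U(\ell)$ and $U'(\ell)$ in $B(\ell)$ can be checked componentwise: a conjugation by $(M_1, M_2)\in B(\ell)$ acts on a pair $(N_1, N_2)$ by $(M_1 N_1 M_1^{-1}, M_2 N_2 M_2^{-1})$, and componentwise normality of $\cal{U}(\ell), \cal{U}'(\ell)\triangleleft \cal{B}(\ell)$ (Proposition \ref{group-prop-QM}(\ref{group-prop-1})) keeps each component in the required unipotent subgroup. The extra constraint in $U'(\ell)$ that both scalars coincide is preserved because conjugation by an upper triangular matrix does not alter the scalar part of an element of $\cal{U}'(\ell)$.

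For part (\ref{group-prop-2-RM}), the map $B(\ell)\to T(\ell)$ sending each factor to its diagonal part is a surjective homomorphism with kernel exactly $U(\ell)$, inducing $B(\ell)/U(\ell)\simeq T(\ell)$; since $T(\ell)\subseteq (\F_\ell^{\times})^4$ is abelian, so is the quotient. For $B(\ell)/U'(\ell)$, note that $U'(\ell)/U(\ell)$ sits inside $T(\ell)\simeq B(\ell)/U(\ell)$ as the diagonal copy $\{((\lambda,\lambda),(\lambda,\lambda)):\lambda\in \F_\ell^{\times}\}$, so by the third isomorphism theorem $B(\ell)/U'(\ell)$ is a quotient of the abelian group $T(\ell)$, hence abelian. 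There is no real obstacle here: each step is a direct matrix-group computation strictly parallel to the $\GL_2$ case of Proposition \ref{group-prop-QM}, and the only technical point worth checking carefully is the interaction between the fiber product condition $\det M_1=\det M_2$ and the scalar-matching condition in $U'(\ell)$.
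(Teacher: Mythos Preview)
Your proposal is correct. Each step checks out: the fiber-product cardinality computation $|\cal{H}(\ell)|^2/(\ell-1)$ is valid because $\det$ is surjective on $\GL_2(\F_\ell)$, $\cal{B}(\ell)$, and $\cal{T}(\ell)$; the componentwise normality argument works, and your observation that conjugation by an upper triangular matrix preserves the diagonal scalar of an element of $\cal{U}'(\ell)$ (since $N=\lambda I + (\text{strictly upper triangular})$ and the strictly upper triangular part stays strictly upper triangular under conjugation in $\cal{B}(\ell)$) handles the only subtle point in checking $U'(\ell)\trianglelefteq B(\ell)$; and the diagonal-part map $B(\ell)\to T(\ell)$ together with the third isomorphism theorem disposes of part (\ref{group-prop-2-RM}) cleanly.

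The paper itself does not supply an argument here: it simply refers the reader to \cite[Lemmas 11--13]{CoWa2023}. Your route is therefore different in the sense that it is self-contained within the present paper, deducing everything from the $\GL_2$ case already stated in Proposition~\ref{group-prop-QM} via the fiber-product structure. This buys transparency and keeps the dependency graph internal, whereas the paper's approach simply defers to the cited source. Either is acceptable; yours is arguably preferable in an expository sense since the verification is short and the reduction to Proposition~\ref{group-prop-QM} makes the parallelism between the $\GL_2$ and $G(\ell)$ settings explicit.
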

\begin{proof}
See \cite[Lemma 11, Lemma 12, and Lemma 13 pp. 697-698]{CoWa2023} for the proofs.
\end{proof}

We also need  
the following conjugation invariant sets of $G(\ell)$.
\begin{align}
    \cal{C}^1(\ell) & := 
    \{M\in G(\ell): \car_M(X) = X^4+\mu X^2+\mu^2=\prod_{1\leq j\leq 4}(X-\lambda_j), \lambda_j, \mu\in \F_\ell^{\times}\}, \label{conj-1}\\
    \cal{C}^2(\ell) & := 
    \{M\in G(\ell): \car_M(X) = X^4-\mu X^2+\mu^2=\prod_{1\leq j\leq 4}(X-\lambda_j), \lambda_j, \mu\in \F_\ell^{\times}\}, \label{conj-2}\\
    \cal{C}^3(\ell) & := 
    \{M\in G(\ell): \car_M(X) = X^4  +\mu^2=\prod_{1\leq j\leq 4}(X-\lambda_j), \lambda_j, \mu \in \F_\ell^{\times}\}, \label{conj-3}\\
     \cal{C}^4(\ell) & := 
    \{M\in G(\ell): \car_M(X) = (X^2-\mu)^2=\prod_{1\leq j\leq 4}(X-\lambda_j), \lambda_j, \mu \in \F_\ell^{\times}\},\label{conj-4}\\
         \cal{C}^5(\ell) & := 
    \{M\in G(\ell): \car_M(X) = (X^2+\mu)^2=\prod_{1\leq j\leq 4}(X-\lambda_j), \lambda_j, \mu \in \F_\ell^{\times}\}.\label{conj-5}
\end{align}
Similar to \Cref{conj-class-GSp}, (\ref{conj-1}) and (\ref{conj-2}), (\ref{conj-4}) and (\ref{conj-5}) are actually the same sets, but distinguishing their names will simplify the argument in \Cref{proof-thm-RM}.

\begin{proposition}\label{conj-prop}
Let $\ell$ be an odd prime such that $\kronecker{-1}{\ell}=\kronecker{2}{\ell}=\kronecker{3}{\ell}=1
$. For each $1\leq i\leq 5$, the following statements hold. 
  \begin{enumerate}
    \item $\cal{C}^i(\ell)$ is nonempty and is invariant under conjugation in $G(\ell)$. \label{conj-prop-1-RM}
    \item Each element of $\cal{C}^i(\ell)$ is conjugate over $G(\ell)$ to an element of $B(\ell)$. \label{conj-prop-2-RM}
    \item We have $U'(\ell) \cal{C}^i(\ell)\subseteq \cal{C}^i(\ell)$. \label{conj-prop-3-RM}
    \end{enumerate}
\end{proposition}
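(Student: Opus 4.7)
All three parts of Proposition \ref{conj-prop} are directly analogous to Proposition \ref{gsp-conj-prop} (for $\GSp_4(\F_\ell)$) and Proposition \ref{conj-prop-QM} (for $\GL_2(\F_\ell)$); I would follow the same template, adapting the matrix computations to the ambient group $G(\ell) = \GL_2(\F_\ell) \times_{\det} \GL_2(\F_\ell)$. The main simplification is that every $(M_1,M_2)\in G(\ell)$ can be viewed as the block-diagonal element $\diag(M_1,M_2)\in \GL_4(\F_\ell)$, whose characteristic polynomial is the product $\car_{M_1}(X)\,\car_{M_2}(X)$.

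For part (\ref{conj-prop-1-RM}), conjugation invariance of each $\cal{C}^i(\ell)$ is immediate from invariance of characteristic polynomials under conjugation. For nonemptiness, I would factor each defining polynomial as $(X^2 + sX + \mu)(X^2 - sX + \mu)$ for suitable $s \in \F_\ell$ and $\mu \in \F_\ell^{\times}$, with $s^2 \in \{\mu, 3\mu, 2\mu, 4\mu, 0\}$ for $i=1,\ldots,5$ respectively (the same case analysis as in Proposition \ref{gsp-conj-prop}). The hypotheses $\kronecker{-1}{\ell} = \kronecker{2}{\ell} = \kronecker{3}{\ell} = 1$ guarantee that the required square roots exist in $\F_\ell$ and that each quadratic factor splits, giving eigenvalues in $\F_\ell^{\times}$. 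Choosing $M_1, M_2 \in \GL_2(\F_\ell)$ with these characteristic polynomials yields a pair with common determinant $\mu$, hence an element of $G(\ell) \cap \cal{C}^i(\ell)$.

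For part (\ref{conj-prop-2-RM}), each component $M_j$ of $(M_1,M_2) \in \cal{C}^i(\ell)$ has split characteristic polynomial, so each is $\GL_2(\F_\ell)$-conjugate to an upper triangular matrix via some $g_j$. The pair $(g_1, g_2)$ need not lie in $G(\ell)$, but left-multiplying $g_1$ by a diagonal matrix preserves its conjugation of $M_1$ into upper triangular form (since diagonal matrices normalize the upper triangular subgroup of $\GL_2(\F_\ell)$), while allowing $\det g_1$ to be rescaled by any element of $\F_\ell^{\times}$; thus $g_1$ can be adjusted so that $\det g_1 = \det g_2$, placing $(g_1,g_2) \in G(\ell)$.

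For part (\ref{conj-prop-3-RM}), the plan is to combine parts (\ref{conj-prop-1-RM}) and (\ref{conj-prop-2-RM}) to reduce to the case $M \in \cal{C}^i(\ell) \cap B(\ell)$, so both $M_j$ are upper triangular with diagonal entries being the eigenvalues $\lambda_{2j-1}, \lambda_{2j}$. For $N \in U'(\ell)$ with common scalar $\lambda$, each $N_j M_j$ is then upper triangular with diagonal entries $\lambda \lambda_{2j-1}, \lambda \lambda_{2j}$, so $\car_{NM}(X) = \prod_{j=1}^{4}(X - \lambda\lambda_j)$; substituting $\lambda_j \mapsto \lambda\lambda_j$ into the defining polynomial of $\cal{C}^i$ replaces the parameter $\mu$ by $\lambda^2\mu \in \F_\ell^{\times}$, preserving the form so that $NM \in \cal{C}^i(\ell)$. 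The main obstacle lies exactly here: since $U'(\ell)$ is \emph{not} normal in $G(\ell)$, one cannot reduce the claim for arbitrary $M \in \cal{C}^i(\ell)$ to the Borel case by a simultaneous conjugation of $N$ and $M$. Verifying the set inclusion as literally stated therefore requires either a direct cancellation of the off-diagonal contributions of $N$ in the characteristic polynomial of $NM$, or -- as suffices for the only use of the claim, namely hypothesis (\ref{N-C}) of Lemma \ref{func-ECDT} applied to $H = B(\ell)$, $N = U'(\ell)$ -- a restriction of the inclusion to $\cal{C}^i(\ell) \cap B(\ell)$, where the upper-triangular computation above goes through cleanly.
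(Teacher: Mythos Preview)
Your treatment of parts (\ref{conj-prop-1-RM}) and (\ref{conj-prop-2-RM}) is correct and matches the paper: for (\ref{conj-prop-1-RM}) it refers back to the nonemptiness construction in Proposition~\ref{gsp-conj-prop}, and for (\ref{conj-prop-2-RM}) it cites \cite[Lemma~15]{CoWa2023}, which packages precisely the determinant-adjustment argument you wrote out.

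For part (\ref{conj-prop-3-RM}) you have put your finger on a genuine issue, and in fact you are being more careful than the paper itself. The paper's proof simply points to the computation in Proposition~\ref{gsp-conj-prop}~(\ref{conj-prop-6}), which asserts that for any $M\in\cal{C}^i(\ell)$ and $N\in U'(\ell)$ with common diagonal entry $\lambda$ one has $\car_{NM}(X)=\prod_j(X-\lambda\lambda_j)$. That assertion is false without the assumption $M\in B(\ell)$: take $M_1=M_2=\left(\begin{smallmatrix}0&1\\1&0\end{smallmatrix}\right)$, so $(M_1,M_2)\in\cal{C}^4(\ell)$ with $\mu=1$, and $N_1=\left(\begin{smallmatrix}1&1\\0&1\end{smallmatrix}\right)$, $N_2=I_2$; then $\car_{N_1M_1}(X)=X^2-X-1$, so $\car_{NM}(X)$ has nonzero $X^3$-coefficient and lies in no $\cal{C}^i(\ell)$. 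Thus your first alternative (a direct cancellation of off-diagonal contributions) cannot succeed.

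Your second alternative is exactly the right repair. The only place (\ref{conj-prop-3-RM}) is used is to verify hypothesis~(\ref{N-C}) of Lemma~\ref{func-ECDT} with $H=B(\ell)$ and $N=U'(\ell)$, i.e.\ the inclusion $U'(\ell)\bigl(\cal{C}^i(\ell)\cap B(\ell)\bigr)\subseteq \cal{C}^i(\ell)\cap B(\ell)$; for $M\in B(\ell)$ both components are upper triangular, and your diagonal-entry computation goes through verbatim. The same correction applies to Propositions~\ref{conj-prop-QM}~(\ref{conj-prop-3-QM}) and~\ref{gsp-conj-prop}~(\ref{conj-prop-6}).
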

\begin{proof}
For (1), the conjugation invariant property  is obvious since the eigenvalues of $G(\ell)$ are invariant under conjugation. 
The proofs of  nonemptiness of $\cal{C}^i(\ell)$ for $1\leq i\leq 5$ is similar to  the proof of  \Cref{gsp-conj-prop} (1).

(2)  follows from \cite[Lemma 15, p. 700]{CoWa2023}, since the characteristic polynomial of each  element of $\cal{C}^i(\ell)$ splits into linear factors.

The proof of (3)  also follows similarly from the proof of  \Cref{gsp-conj-prop} (3).

\end{proof}

Similarly as before, we consider  the sets
\begin{align*}
\cal{C}^i_B(\ell) & :=\cal{C}^i(\ell)\cap B(\ell), \  1\leq i\leq 5,\\
\ol{\cal{C}}^i_B(\ell) & :=\text{image of } \cal{C}_B^i(\ell) \text{ in } B(\ell)/U'(\ell), \  1\leq i\leq 5.
\end{align*}

\begin{proposition}\label{conj-size-RM}
        We have
        $|\ol{\cal{C}}_B^i(\ell)| \ll 1$ for $1\leq i\leq 5$.
 \end{proposition}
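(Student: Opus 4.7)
The plan is to mirror the argument for Proposition \ref{gsp-conj-size}, exploiting that the characteristic polynomial constraints defining $\cal{C}^i(\ell)$ are formally identical to those defining $G\cal{C}^i(\ell)$. First, I would introduce the intermediate sets
\[
\ol{\cal{D}}^i_B(\ell) := \text{image of } \cal{C}^i_B(\ell) \text{ in } B(\ell)/U(\ell), \qquad 1\leq i\leq 5,
\]
and observe that $|\ol{\cal{C}}^i_B(\ell)| = |\ol{\cal{D}}^i_B(\ell)|/|U'(\ell)/U(\ell)|$, since the natural surjection $B(\ell)/U(\ell)\twoheadrightarrow B(\ell)/U'(\ell)$ pulls back $\ol{\cal{C}}^i_B(\ell)$ to $\ol{\cal{D}}^i_B(\ell)$. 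By Proposition \ref{group-prop-RM}(\ref{group-prop-3-RM}) we have $|U'(\ell)/U(\ell)| = \ell-1$, so it suffices to show $|\ol{\cal{D}}^i_B(\ell)| \ll \ell$.

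Next, I would use the isomorphism $B(\ell)/U(\ell) \simeq T(\ell)$ from Proposition \ref{group-prop-RM}(\ref{group-prop-2-RM}): a class in $\ol{\cal{D}}^i_B(\ell)$ is uniquely represented by the tuple of diagonal entries of a pair $(M_1,M_2)\in \cal{C}^i_B(\ell)$, subject to $\det M_1=\det M_2=:\mu$. Writing these eigenvalues as $\lambda_1,\mu\lambda_1^{-1}$ (for $M_1$) and $\lambda_2,\mu\lambda_2^{-1}$ (for $M_2$), the characteristic polynomial factors as
\[
\car_M(X) = \prod_{1\leq j\leq 2}(X-\lambda_j)(X-\mu\lambda_j^{-1}),
\]
which is exactly the shape appearing in (\ref{conj-1})--(\ref{conj-5}).

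Then for each $i$, expanding this product and matching coefficients with the specified polynomial in (\ref{conj-1})--(\ref{conj-5}) yields the same system of equations in $(\lambda_1,\lambda_2,\mu)\in(\F_\ell^\times)^3$ that was analyzed in the proof of Proposition \ref{gsp-conj-size}: the $X^3$ (equivalently $X$) coefficient forces $\sum_{j}(\lambda_j+\mu\lambda_j^{-1})=0$, while the $X^2$ coefficient gives an affine relation on $\lambda_1\lambda_2^{-1}+\lambda_1^{-1}\lambda_2$. Splitting into the cases $\lambda_1+\lambda_2=0$ (which contributes $\O(\ell)$ via the free choice of one coordinate) and $\lambda_1+\lambda_2\neq 0$ (which collapses the remaining equation to a bounded-many-solutions constraint on $\lambda_1\lambda_2^{-1}$, for each of which $\mu$ is then determined) gives $|\ol{\cal{D}}^i_B(\ell)|\ll \ell$ in each of the five cases, exactly as in Proposition \ref{gsp-conj-size}.

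The only genuinely delicate point — and the one I would double-check — is the bookkeeping in identifying the diagonal entries of $(M_1,M_2)\in B(\ell)$ with the parameterization $(\lambda_1,\mu\lambda_1^{-1},\lambda_2,\mu\lambda_2^{-1})$ in a way compatible with the defining constraint $\det M_1=\det M_2$ of $G(\ell)$, and confirming that the index $|U'(\ell)/U(\ell)|=\ell-1$ cancels the $\ell$ from the $\ol{\cal{D}}$-count. Once that is verified, combining the two bounds gives $|\ol{\cal{C}}^i_B(\ell)|\ll \ell/(\ell-1)\ll 1$ as claimed.
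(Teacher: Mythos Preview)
Your proposal is correct and follows essentially the same route as the paper's proof: introduce the intermediate images $\ol{\cal{D}}^i_B(\ell)$ in $B(\ell)/U(\ell)\simeq T(\ell)$, identify the resulting parameter count with the one already carried out for $GT(\ell)$ in Proposition~\ref{gsp-conj-size} (the paper even records the isomorphism $T(\ell)\simeq GT(\ell)$ explicitly for this purpose), and then divide by $|U'(\ell)/U(\ell)|=\ell-1$. The only point worth making explicit is that the equality $|\ol{\cal{C}}^i_B(\ell)|\cdot|U'(\ell)/U(\ell)|=|\ol{\cal{D}}^i_B(\ell)|$ relies on Proposition~\ref{conj-prop}(\ref{conj-prop-3-RM}), which guarantees $\ol{\cal{D}}^i_B(\ell)$ is a union of $U'(\ell)/U(\ell)$-cosets.
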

\begin{proof}
   First, we consider the sets 
    \[
    \ol{\cal{D}}_B^i(\ell) := \text{image of }
    \cal{C}_B^i(\ell) \text{ in } B(\ell)/U(\ell), 1\leq i\leq 5.
    \]
    Take  $M\in \cal{C}_B^i(\ell)$. Then the image of $M$ in $B(\ell)/U(\ell)\simeq T(\ell)$ is uniquely determined by its eigenvalues. As abelian groups, we have the isomorphism (see  \Cref{gsp-prop} (1), (3) and  \Cref{group-prop-RM} (1), (3))
    \[
    T(\ell)\simeq B(\ell)/U(\ell) \simeq GB(\ell)/GU(\ell)\simeq GT(\ell). 
    \]
 So we can proceed similarly as in the proof of  \Cref{gsp-conj-size}, and derive the bounds 
 \[
     |\ol{\cal{D}}_B^i(\ell)|\ll \ell, \;  1\leq i\leq 5. 
    \]
Finally, we get
\[
|\ol{\cal{C}}_B^i(\ell)|=\frac{|\ol{\cal{D}}_B^i(\ell)|}{|\F_\ell^{\times}|} \ll 1, \;  1\leq i\leq 5. 
\]
\end{proof}

\section{Proof of Theorem 1}
Throughout this section, we keep the notation in \Cref{sec:abelianS} and \Cref{sec:analitic}.
\subsection{The setup}\label{S5.1}

Recalling the classification of possible Weil polynomials in \Cref{ss-Weil-poly}, we obtain the upper bound:
\begin{align}
\pi_{A, ss}(x) 
& \leq \#\{\fp\in \Sigma_K: N(\fp)\leq x, \fp \nmid N_A, \overline{A}_\fp \text{ is supersingular and simple} \} \nonumber \\
& \hspace{10pt}+ \#\{\fp\in \Sigma_K, N(\fp)\leq x, \fp \nmid N_A, \overline{A}_\fp \text{ is supersingular and splits} \} \nonumber \\
&\leq \#\{\fp\in \Sigma_K, N(\fp)=p\leq x, \fp \nmid N_A, P_{A, \fp}(x)= x^4+px^2+p^2\} \label{estimate-1}\\
& \hspace{10pt} + \#\{\fp\in \Sigma_K, N(\fp)=p\leq x, \fp \nmid N_A, P_{A, \fp}(x)= x^4-px^2+p^2\} \label{estimate-2}\\
& \hspace{10pt} +  \#\{\fp\in \Sigma_K, N(\fp)=p\leq x, \fp \nmid N_A, P_{A, \fp}(x)= x^4+p^2\} \label{estimate-3}\\
& \hspace{10pt} + \#\{\fp\in \Sigma_K, N(\fp)=p\leq x, \fp \nmid N_A, P_{A, \fp}(x)= (x^2-p)^2\} \label{estimate-4}\\
& \hspace{10pt} + \#\{\fp\in \Sigma_K, N(\fp)=p\leq x, \fp \nmid N_A, P_{A, \fp}(x)= (x^2+p)^2\} \label{estimate-5}\\
&  \hspace{10pt} +\O(x^{\frac{1}{2}}),
\end{align}
where  (\ref{estimate-1})-(\ref{estimate-4}) together  count prime ideals  $\fp \nmid N_A$ such that $\overline{A}_\fp$ is  supersingular and simple and  (\ref{estimate-5}) counts primes  $\fp \nmid N_A$ such that $\overline{A}_\fp$ is  supersingular and splits. For each $1 \leq i \leq 5$,  we define  $\mathcal{S}_i$  to be the set of primes $\fp$ corresponding, respectively, to   (\ref{estimate-1}), (\ref{estimate-2}), (\ref{estimate-3}),  (\ref{estimate-4}), and (\ref{estimate-5}). 

For each prime $\ell$ and each index $1\leq i\leq 5$, we denote by 
\begin{equation*}
    \tilde{\pi}_{A}^i(\ell, x) := \{\fp\in \cal{S}_i: P_{A, \fp}(X)\pmod \ell \text{ splits into  linear factors in } \F_\ell[X]\}.
\end{equation*}
We first apply \Cref{uncond-lem} to bound $\pi_{A, \mathrm{ss}}(x)$ in terms of $\tilde{\pi}_{A}^i(\ell, x)$, where the prime $\ell$ is chosen from one of the following carefully constructed sets:  
\begin{enumerate}
\item \label{Li-trival} 
We define
\begin{equation*}
      \cal{L}_i :=\begin{cases}
      \left\{\ell \text{ odd}:  \kronecker{-1}{\ell}=\kronecker{3}{\ell}=1 \right\}=\{\ell: \ell \equiv  1\pmod{12}  \}  & i=1, 2 \\
      \left\{\ell \text{ odd}:  \kronecker{-1}{\ell}=\kronecker{2}{\ell}=1 \right\}=\{\ell: \ell \equiv  1\pmod{8}  \}   & i=3 \\
      \{\ell \text{ odd prime}\}    & i=4 \\
       \left\{\ell \text{ odd}:  \kronecker{-1}{\ell}=1 \right\}=\{\ell: \ell \equiv  1\pmod{4}\}    & i=5.
      \end{cases}
    \end{equation*}
 \item \label{Li-RM} If $\End_{\ol{K}}(A)\otimes \Q\simeq \Q(\sqrt{d})$ for some  $d> 0$, 
 we  define for each $i\in \{1, 2, 3, 4, 5\}$
 \begin{align*}
    \cal{L}^{RM}_i & := \cal{L}_i\cap \left\{\ell \text{ odd}:  \ell \text{ is unramified in $K$ and } \kronecker{-d}{\ell}=1\right\} \\
    & = \cal{L}_i\cap \left\{\ell \text{ odd}: \ell \text{ is unramified in $K$ and splits in }\Q(\sqrt{d})\right\},
   \end{align*}
   where $\cal{L}_i$ is defined in (\ref{Li-trival}).
   \item \label{Li-QM}
  If $\End_{\ol{K}}(A)\otimes \Q$ is  isomorphic to a quaternion algebra $D$ with discriminant $d_D$, 
  we  define for each $i\in \{1, 2, 3, 4, 5\}$
   \[
    \cal{L}^{QM}_i := \cal{L}_i\cap \left\{\ell \text{ odd}: \ell>7 \text{ is unramified in $K$ and } \ell\nmid d_D \right\},
   \]   
   where $\cal{L}_i$ is defined in (\ref{Li-trival}).
\end{enumerate} 

 We observe that by the Chebotarev Density Theorem, 
   \[
    \left(\bigcap_{1\leq i\leq 5} \cal{L}_i\right)\bigcap  \{\ell: \ell \text{ split in } \Q(\sqrt{d})\}\bigcap \{\ell: \ell\nmid d_D\} \bigcap \{\ell>7: \ell \text{ unramified in } K\} \neq \emptyset.
   \]
It is worth pointing out that the construction of $\cal{L}_i$ follows a similar rationale to the assumptions of $\ell$  in  \Cref{gsp-conj-prop} and  \Cref{conj-prop}.

    The subsequent lemma provides a criterion for the splitting of   $P_{A, \fp}(X) \pmod \ell$  over $\F_\ell[X]$ using the Legendre symbol $\kronecker{N(\fp)}{\ell}$, where $N(\fp)=p$ is a rational prime.
\begin{lemma}\label{split-lem}
    Let $\ell$ be an odd prime. 
    For each  $\fp\in \cal{S}_i$ and  $\ell\in \cal{L}_i$,  $1\leq i\leq 5$,  the polynomial $P_{A, \fp}(X) \pmod \ell$  splits into linear factors in $\F_\ell[X]$  if and only if
    $
   \kronecker{N(\fp)}{\ell}\neq -1.
    $
\end{lemma}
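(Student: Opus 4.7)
The plan is to handle the five cases one at a time by explicitly factoring $P_{A, \fp}(X) \pmod \ell$. The key insight is that the congruence conditions defining $\cal{L}_i$ are crafted both to guarantee the required factorization over $\F_\ell$ and to reduce squareness of the relevant roots to squareness of $p$.

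First I would dispose of the degenerate situation $\ell = p$: here $P_{A, \fp}(X) \equiv X^4 \pmod \ell$ in every case, so the polynomial splits, and simultaneously $\kronecker{N(\fp)}{\ell} = 0 \neq -1$. Assume henceforth $\ell \neq p$. The cases $\cal{S}_4$ and $\cal{S}_5$ are immediate: the polynomials $(X^2 - p)^2$ and $(X^2 + p)^2$ split in $\F_\ell[X]$ iff $p$, respectively $-p$, is a square in $\F_\ell^{\times}$; in case $5$ the condition $\kronecker{-1}{\ell} = 1$ imposed by $\cal{L}_5$ converts this to $\kronecker{p}{\ell} = 1$.

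For $i \in \{1,2,3\}$, I would substitute $Y = X^2$ to obtain a quadratic $Y^2 + c p Y + p^2$ with $c \in \{1, -1, 0\}$. Its discriminant is $-3 p^2$ for $i = 1, 2$ and $-4 p^2$ for $i = 3$; the defining conditions of $\cal{L}_i$ ensure that $\sqrt{-3}$, respectively $\sqrt{-1}$, lies in $\F_\ell$, so we get a factorization $(Y - p\omega)(Y - p\omega^{-1})$ over $\F_\ell$, where $\omega \in \F_\ell^{\times}$ is a primitive root of unity of order $3$, $6$, $4$ respectively. Since $(p\omega)(p\omega^{-1}) = p^2$ is already a square, one of $p\omega, p\omega^{-1}$ is a square in $\F_\ell^{\times}$ iff both are; hence $P_{A,\fp}(X) \pmod \ell$ splits into linear factors in $\F_\ell[X]$ iff $p\omega$ is a square.

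The remaining step is to verify that $\omega$ itself is always a square in $\F_\ell^{\times}$ under the hypotheses. By Euler's criterion this amounts to the order of $\omega$ dividing $(\ell-1)/2$, that is, $6 \mid \ell - 1$ for $i = 1$, $12 \mid \ell - 1$ for $i = 2$, and $8 \mid \ell - 1$ for $i = 3$. These are exactly the congruences $\ell \equiv 1 \pmod{12}$ (from $\cal{L}_1, \cal{L}_2$) and $\ell \equiv 1 \pmod{8}$ (from $\cal{L}_3$) built into the definition of $\cal{L}_i$. Consequently $p\omega$ is a square in $\F_\ell^{\times}$ iff $p$ is, which is the desired Legendre symbol condition. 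The main point to get right is not computational but structural: the sets $\cal{L}_i$ are engineered precisely so that both the factorization step and the squareness transfer succeed together; once this matching is recognised, the lemma follows by the case-by-case computation above.
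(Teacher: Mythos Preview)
Your argument is correct, and it is a genuinely different (and in several respects cleaner) route than the one the paper takes. The paper exploits the palindromic structure of $P_{A,\fp}(X)$ to force a factorization of the form $(X^2+b_1X+\mu)(X^2+b_2X+\mu)$ over $\overline{\F}_\ell$, then compares coefficients to obtain two possible systems for $(\mu,b_1^2)$, checks when these have solutions in $\F_\ell$, and finally applies the discriminant criterion to each quadratic factor. Your approach instead substitutes $Y=X^2$, solves the resulting quadratic explicitly as $(Y-p\omega)(Y-p\omega^{-1})$ with $\omega$ a root of unity of order $3$, $6$, or $4$, and then reduces the question ``is $p\omega$ a square?'' to ``is $p$ a square?'' via the observation that $\omega$ itself is a square precisely when $2\cdot\mathrm{ord}(\omega)\mid \ell-1$, which is exactly the congruence condition encoded in $\cal{L}_i$. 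Your method makes the role of the sets $\cal{L}_i$ more transparent---they simultaneously guarantee that $\omega\in\F_\ell^\times$ and that $\omega$ is a square---whereas the paper's coefficient-matching hides this structure behind a case analysis. Both arguments are elementary and of comparable length; yours has the advantage of treating cases $i=1,2,3$ uniformly once the order of $\omega$ is identified.
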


\begin{proof}
We only present the argument in the case where $i=1$, as the proofs for the other cases are very similar.

   Let $\ell\in \cal{L}_1$, $\fp\in \cal{S}_1$, and write $p=N(\fp)$. If $p=\ell$, then $P_{A, \fp}(X)\pmod \ell$ clearly splits into linear factors. So we assume $p\neq \ell$ and we will find the linear factors  of $P_{A, \fp}(X)\pmod \ell $ in $\F_\ell[X]$. 

We observe that the roots of
\[
P_{A, \fp}(X)\equiv X^4+p X^2+p^2\equiv X^4\left(p^2(1/X)^4+p(1/X)^2+ 1\right) \pmod \ell,
\]
come in pairs $(\alpha_i, p/\alpha_i)\in \ol{\F}_\ell\times \ol{\F}_\ell$ for $i\in \{1, 2\}$. So we can write the following  decomposition 
 \begin{equation}\label{split-mod-ell}
 P_{A, \fp}(X) \equiv (X^2+b_1X+\mu)(X^2+b_2X+\mu) \pmod \ell \quad b_1, b_2, \mu \in \overline{\F}_{\ell}.
  \end{equation} 

First, we show $b_1, b_2, \mu \in \F_{\ell}$ if and only if $\kronecker{p}{\ell}=1$.
  By comparing coefficients on both sides of (\ref{split-mod-ell}),  we get 
  \begin{enumerate}
      \item $\mu^2\equiv p^2 \pmod \ell$;
      \item $b_1= -b_2$;
      \item $b_1 b_2 + 2\mu \equiv p \pmod \ell$.
  \end{enumerate}
  This implies
\begin{equation}\label{beta-reducible}
 \begin{cases}
 \mu \equiv    p \pmod \ell \\
 b := b_1^2=b_2^2\equiv p \pmod \ell
 \end{cases} \ \text{ or } \ 
 \begin{cases}
  \mu \equiv  -p \pmod \ell, \\ b := b_1^2=b_2^2\equiv -3p \pmod \ell.
 \end{cases}
  \end{equation}
   Therefore, the decomposition (\ref{split-mod-ell}) is over $\F_\ell[X]$ if and only if
    \begin{equation}\label{p-3p}
  \kronecker{p}{\ell} = 1 \text{ or } \kronecker{-3p}{\ell}=1, 
  \end{equation}
  which is simply  equivalent to $\kronecker{p}{\ell}=1$ because $\ell\in \mathcal{L}_1$. 
  Suppose $ \kronecker{p}{\ell} = 1$ holds. 
  
  Using the discriminant criteria for the reducibility of quadratic polynomials, we observe that the polynomial (\ref{split-mod-ell}) splits  further into linear polynomials in $\F_\ell[X]$ if and only if 
  \[
\kronecker{b^2-4\mu}{\ell}\neq -1.
\]
In either case of (\ref{beta-reducible}), this is again equivalent to requiring that $\kronecker{p}{\ell} \neq -1$.

Therefore, $P_{A,\fp}(X) \bmod \ell$ splits completely into linear factors in $\F_\ell[X]$ if and only if $\kronecker{p}{\ell} \neq -1$, as claimed.
\end{proof}

Based on \Cref{split-lem}, for each prime $\ell \in \cal{L}_i$ (or in the sets $\cal{L}_i^{\mathrm{RM}}$, $\cal{L}_i^{\mathrm{QM}}$), the counting function $\tilde{\pi}_{A}^i(\ell, x)$ admits the description
\begin{equation}\label{pi-A-ell}
\tilde{\pi}_{A}^i(\ell, x) = \#\{\fp\in \cal{S}_i: \kronecker{N(\fp)}{\ell}\neq -1\}, \; 1\leq i\leq 5.
\end{equation}

\subsection{Abelian surface with $D=\Q$}\label{thm-proof-triv}

We retain the notation and assumptions established in \Cref{S3.1},  \Cref{triv-end-group}, and \Cref{S5.1}. Let $x > 0$ be a large real parameter, and let $t = t(x) \ll (\log x)^{1/2}$ be a positive valued function tending to infinity as $x \to \infty$. For each integer $1\leq i\leq 5$, we  take 
\[
\cal{P}_i:=\{\ell_1< \ldots <\ell_t\}\subseteq \cal{L}_i \cap \left\{\ell \text{ odd}:  \kronecker{-1}{\ell}=\kronecker{2}{\ell}=\kronecker{3}{\ell}=1 \right\}
\]
such that $\ell \ll \frac{\log x}{\log\log x}$ for all $\ell=\ell(x) \in \cal{P}$.  By the Prime Number Theorem in arithmetic progressions, such a set $\cal{P}_i$ exists for sufficiently large $x$. 
Applying  \Cref{uncond-lem}, we derive the inequality
\begin{align}\label{S-i-max-trivial}
   |\cal{S}_i| &
    \leq   \sum_{1\leq j\leq t} \tilde{\pi}_{A}^i(\ell_j, x) + \#\{\fp\in \cal{S}_i: \kronecker{N(\fp)}{\ell}=-1 \text{ for all } \ell\in \cal{P}_i\} \nonumber \\
   & \ll   \sum_{1\leq j\leq t} \tilde{\pi}_{A}^i(\ell_j, x) +\frac{x}{2^t \log(x/P_{i})} \nonumber \\
   &\ll 
t\max_{\ell\in \cal{P}_i}\tilde{\pi}_{A}^i(\ell, x) +\frac{x}{2^t \log(x/P_{i})},
\end{align}
where $\displaystyle P_{i}=\prod_{\ell\in \cal{P}_i}\ell$.

Recall from (\ref{Z-Galois-im}) in  \Cref{S2-Galois-rep} that for for sufficiently large $\ell\in \cal{P}_i$, the Galois group of  $K(A[\ell])/K$ is isomorphic to $\GSp_4(\F_{\ell})$. 
By  \Cref{split-lem}, for each prime $\fp$ counted by $\tilde{\pi}_{A}^i(\ell, x)$, the polynomial $P_{A, \fp}(X)\pmod {\ell}$   splits into linear factors.  Equivalently,   the eigenvalues of   $\ol{\rho}_{A, \ell}(\Frob_\fp)$ are in $\F_\ell^{\times}$. Hence,
\[
\tilde{\pi}_{A}^i(\ell, x) \ll \pi_{G\cal{C}^i(\ell)}(x, K(A[\ell])/K).
\]
Invoking \Cref{gsp-conj-prop}  and applying \Cref{func-ECDT} to the extension $K(A[\ell])/K$ with $G=\GSp_4(\F_\ell)$, $H=GB(\ell)$, $N=GU'(\ell)$, and conjugacy class $\cal{C}=G\cal{C}^i(\ell)$, we obtain
 \begin{align*}
   \tilde{\pi}_{A}^i(\ell, x)  &\ll  \pi_{\overline{G\cal{C}}_B^i(\ell)}(x, K(A[\ell])^{GU'(\ell)}/K(A[\ell])^{GB(\ell)}).
\end{align*}
Applying parts (1) and (3) of  \Cref{gsp-prop},  \Cref{gsp-conj-size},  \Cref{lem-logdK}, and   \Cref{uncond-ECDT}  to the extension $K(A[\ell])^{GU'(\ell)}/K(A[\ell])^{GB(\ell)}$ with the conjugacy-invariant subset $\cal{C}= \overline{G\cal{C}}_B^i(\ell)$  yields
 \begin{equation}\label{pi-tilde}
    \tilde{\pi}_{A}^i(\ell, x)\ll 
  \frac{1}{\ell^2}\frac{x}{\log x},
 \end{equation}
under the restriction that  
 \begin{equation}\label{restriction-ell}
 \log x\gg n_K\ell^4 \log(\ell N_A d_K x).
 \end{equation}
 Combining (\ref{S-i-max-trivial}) and  (\ref{pi-tilde}), we deduce that for all sufficiently large $x$,
  \begin{align*}
    |\cal{S}_i| &\ll  
 \frac{t}{\ell_1^2}\frac{x}{\log x} +\frac{x}{2^t \log(x/\ell_t^t)}
\end{align*}
as long as (\ref{restriction-ell}) is satisfied. 
To optimize this bound, choose $\ell_1=\ell_1(x)$ and $t=t(x)$ such that,  for sufficiently large $x>0$,
\[
 \ell_1\sim c_1\left(\frac{\log x}{n_K\log \log (N_A d_K x)} \right)^{\frac{1}{4}}, \;  t\sim c \log \log x
\]
 for some constants  (sufficiently large) $c > 0$ and (sufficiently small) $c_1 > 0$.
The existence of $t$ primes $\ell_1, \ldots, \ell_t$ in the interval $[\ell_1, 2\ell_1]$ is guaranteed for sufficiently large $x$.

Substituting these choices yields the asymptotic estimate 
\[
  |\cal{S}_i|\ll
  \frac{x (\log \log x)^{\frac{3}{2}}}{(\log x)^{\frac{3}{2}}},
   \]
 where the implicit constant in $\ll$ depends on $A$ and $K$.
 
Summing over $1 \leq i \leq 5$ completes the proof of part (1) of \Cref{main-thm-1}:
\begin{align*}
  \pi_{A, ss}(x) & \ll \sum_{1\leq i\leq 5}|\cal{S}_i|  \ll \frac{x (\log \log x)^{\frac{3}{2}}}{(\log x)^{\frac{3}{2}}}.
\end{align*}

\subsection{Abelian surface with real multiplication}\label{proof-thm-RM}

We keep the notation and assumptions from  \Cref{S3.1},  \Cref{G-ell-section},  and  \Cref{S5.1}. 

We adapt the argument from the previous section to the  primes in the set $\cal{L}_i^{\mathrm{RM}}$. 
Let $x > 0$ be a real parameter tending to infinity, and let $t = t(x) \ll (\log x)^{1/2}$ be a function of $x$ such that $t(x) \to \infty$ as $x \to \infty$. Fix an integer $1 \leq i \leq 5$, and define a set of primes
\[
\cal{P}_i:=\{\ell_1< \ldots< \ell_t\}\subseteq \cal{L}_i^{RM} \cap \left\{\ell \text{ odd}:  \kronecker{-1}{\ell}=\kronecker{2}{\ell}=\kronecker{3}{\ell}=1 \right\}
\]
such that $\ell\ll \frac{\log x}{\log\log x}$ for all $\ell=\ell(x) \in \cal{P}$. 
Applying \Cref{uncond-lem}, we obtain the bound
\begin{align}\label{S-i-max-RM}
   |\cal{S}_i| &
    \leq   \sum_{1\leq j\leq t} \tilde{\pi}_{A}^i(\ell_j, x) + \#\{\fp\in \cal{S}_i: \kronecker{N(\fp)}{\ell}=-1 \text{ for all } \ell\in \cal{P}_i\} \nonumber \\
   &\ll 
t\max_{\ell\in \cal{P}_i}\tilde{\pi}_{A}^i(\ell, x)   +\frac{x}{2^t \log(x/P_{i})},
\end{align}
where $\displaystyle P_{i}=\prod_{\ell\in \cal{P}_i}\ell$. 

We now recall the first part of(\ref{RM-Galois-im}) in \Cref{S2-Galois-rep}, from which it follows that for any sufficiently large prime $\ell \in \cal{P}_i$, the Galois group of the extension $K(A[\ell])/K$ is isomorphic to $G(\ell)$.

Proceeding analogously to the argument in the previous section, we observe that
\[
\tilde{\pi}_{A}^i(\ell, x) \ll \pi_{\cal{C}^i(\ell)}(x, K(A[\ell])/K).
\]
We invoke  \Cref{conj-prop} and apply  \Cref{func-ECDT} with $G=G(\ell)$, $H=B(\ell)$, $N=U'(\ell)$, and $\cal{C}=\cal{C}^i(\ell)$ to get
 \begin{align*}
   \tilde{\pi}_{A}^i(\ell, x)  &\ll  \pi_{\overline{\cal{C}}_B^i(\ell)}(x, K(A[\ell])^{U'(\ell)}/K(A[\ell])^{B(\ell)}), \;  1\leq i\leq 5.
\end{align*}
We further apply parts (1) and (3) of \Cref{group-prop-RM}, together with \Cref{conj-size-RM}, \Cref{lem-logdK}, and \Cref{uncond-ECDT}, to the Galois extension $K(A[\ell])^{U'(\ell)}/K(A[\ell])^{B(\ell)}$ and the conjugation invariant set $\cal{C}= \overline{\cal{C}}_B^i(\ell)$. This yields the estimate
 \begin{equation}\label{pi-tilde-RM}
    \tilde{\pi}_{A}^i(\ell, x)\ll 
  \frac{1}{\ell^2}\frac{x}{\log x},
 \end{equation}
under the restriction that \begin{equation}\label{restriction-ell-RM}
 \log x\gg n_K\ell^2 \log(\ell N_A d_K x).
 \end{equation}
  Therefore, for all sufficiently large $x>0$, we have 
  \begin{align*}
    |\cal{S}_i| &\ll  
 \frac{t}{\ell_1^2}\frac{x}{\log x} +\frac{x}{2^t \log(x/\ell_t^t)}
\end{align*}
as long as (\ref{restriction-ell-RM}) is satisfied for  $\ell\in \cal{P}_i$. 
To satisfy this condition, we choose $\ell_1=\ell_1(x)$ and $t=t(x)$ such that for all sufficiently large $x>0$, 
\[
 \ell_1\sim c_1'\left(\frac{\log x}{n_K\log \log (N_A x)} \right)^{\frac{1}{2}}, \;  t\sim c' \log \log x
\]
for some sufficiently large positive real number $c'$ and some sufficiently small positive real number $c_1'$. The existence of such a sequence of primes follows from standard results on the distribution of primes in short intervals. 
We conclude that for all sufficiently large $x>0$, 
\[
  |\cal{S}_i|\ll
   \frac{x (\log \log x)^{2}}{(\log x)^{2}},
   \]
    where the implicit constant in $\ll$ depends on $A$ and $K$.
    
Summing over all $1 \leq i \leq 5$, we obtain the final estimate
\begin{align*}
  \pi_{A, ss}(x) & \ll \sum_{1\leq i\leq 5}|\cal{S}_i|  \ll \frac{x (\log \log x)^{2}}{(\log x)^{2}},
\end{align*}
 where the  implicit constant in the last $\ll$ depends on $A$ and $K$.

This completes the proof of part (2) of  \Cref{main-thm-1}.

\subsection{Abelian surface with quaternion multiplication}\label{proof-QM}

We keep the notation and assumptions form  \Cref{S3.1},  \Cref{QM-section}, and \Cref{S5.1}. Let $t \geq 1$ be a fixed integer. For each index $ i \in \{4, 5\}$, define the set of primes  
\[
\cal{P}_i:=\{\ell_1< \ldots< \ell_t\}\subseteq \cal{L}_i^{QM} \cap \left\{\ell \text{ odd}:  \kronecker{-1}{\ell}=\kronecker{2}{\ell}=\kronecker{3}{\ell}=1 \right\},
\]
with the additional constraint that each $\ell = \ell(x) \in \cal{P}_i$ satisfies $\ell \ll \frac{\log x}{\log \log x}$.  

From (\ref{QM-Galois-im}) in \Cref{S2-Galois-rep}, we recall that for $\ell$ sufficiently large, the Galois group of the extension $K(A[\ell])/K$ is isomorphic to $\GL_2(\F_\ell)$, embedded diagonally in $\GSp_4(\F_\ell)$. Consequently, for every prime ideal $\fp$ counted by $\pi_{A, \mathrm{ss}}(x)$, the associated Frobenius polynomial $P_{A,\fp}(X)$ is a square in $\Z[X]$. It follows that
\[
\pi_{A, ss}(x)\leq |\cal{S}_4|+|\cal{S}_5|.
\]

For each $i\in \{4, 5\}$, by  \Cref{uncond-lem}, we obtain that  
\begin{align}\label{S-i-max-QM}
   |\cal{S}_i| &
    \leq   \sum_{1\leq j\leq t} \tilde{\pi}_{A}^i(\ell_j, x) + \#\{\fp\in \cal{S}_i: \kronecker{N(\fp)}{\ell}=-1 \text{ for all } \ell\in \cal{P}_i\} \nonumber \\
   &\ll 
t\max_{\ell\in \cal{P}_i}\tilde{\pi}_{A}^i(\ell, x) +\frac{x}{2^t \log(x/P_{i})},
\end{align}
where $\displaystyle P_{i}=\prod_{\ell\in \cal{P}_i}\ell$.
As in the previous analysis, we estimate
\[
\tilde{\pi}_{A}^i(\ell, x) \ll \pi_{\cal{C}'^i(\ell)}(x, K(A[\ell])/K).
\]
We apply \Cref{conj-prop-QM} and \Cref{func-ECDT} to the Galois extension 
$K(A[\ell])/K$ with $G=\GL_2(\F_\ell)$, $H=\cal{B}(\ell)$, $N=\cal{U}'(\ell)$, and $\cal{C}=\cal{C'}^{i}(\ell)$, which yields the estimate
 \begin{align*}
   \tilde{\pi}_{A}^i(\ell, x)  &\ll  \pi_{\ol{\cal{C'}^{i}_{\cal{B}}}(\ell)}(x, K(A[\ell])^{\cal{U}'(\ell)}/K(A[\ell])^{\cal{B}(\ell)}), 
    \; i\in \{4, 5\}.
\end{align*}
Next, we  apply part (1) of \Cref{group-prop-QM},  \Cref{conj-size-QM},  \Cref{lem-logdK}, and  \Cref{uncond-ECDT}
 and  with Galois extension $K(A[\ell])^{\cal{U}'(\ell)}/\Q(A[\ell])^{\cal{B}(\ell)}$, $\cal{C}= \ol{\cal{C'}^{i}_{\cal{B}}}(\ell)$ to get 
 \begin{equation}\label{pi-tilde-QM}
    \tilde{\pi}_{A}^i(\ell, x)\ll 
  \frac{1}{\ell}\frac{x}{\log x},
 \end{equation}
under the restriction that 
\begin{equation}\label{restriction-ell-QM}
 \log x\gg n_K\ell \log(\ell N_A d_K x).
 \end{equation}
 So for all sufficiently large $x>0$, we have 
  \begin{align*}
    |\cal{S}_i| \ll  
 \frac{t}{\ell_1}\frac{x}{\log x} +\frac{x}{2^t \log(x/\ell_t^t)}
\end{align*}
as long as (\ref{restriction-ell-QM}) is satisfied for $\ell\in \cal{P}_i$. 
As before, we choose $\ell_1 :=\ell_1(x)$ and $t :=t(x)$ such that for sufficiently large $x>0$,
\[
 \ell_1\sim c''_1\frac{\log x}{n_K\log \log (N_A d_K x)}, \;  t\sim c'' \log \log x
\]
for some sufficiently large positive real number $c''$ and some sufficiently small positive real number $c''_1$. 

Substituting this estimate into \eqref{S-i-max-QM}, we conclude that for all sufficiently large $x > 0$,
\[
  |\cal{S}_i|\ll 
   \frac{x (\log \log x)^{2}}{(\log x)^{2}},
   \]
    where the  implicit constant in  $\ll$ depends on $A$ and $K$.
    
 Putting all together, we obtain 
\begin{align*}
  \pi_{A, ss}(x) & \ll |\cal{S}_4|+|\cal{S}_5|  \ll \frac{x (\log \log x)^{2}}{(\log x)^{2}},
\end{align*}
 where the  implicit constant in the last $\ll$ depends on $A$ and $K$.
This completes the proof of part (3) of  \Cref{main-thm-1}.

\section{Proofs of \Cref{a2p-split} and   \Cref{a2p-interval}}
We keep the notation and assumptions from   \Cref{a2p-split} and   \Cref{a2p-interval}. In particular,
 $A$ is an abelian surface  defined over a number field $K$ such that   $\End_K(A)\otimes_\Z \Q=\End_{\ol{K}}(A)\otimes \Q= \Q(\sqrt{d})$ for some squrefree integer $d>1$.  Let $N_A$ be the conductor of $A$ and  $\fp$ be a degree 1 prime  of $K$ such that $\fp\nmid N_A$. The Frobenius polynomial of $A$ at $\fp$ is $P_{A, \fp}(X)=X^4+a_{1, \fp}X^3+a_{2, \fp}X^2+pa_{1, \fp}X+p^2\in \Z[X]$.

First, we present the proof of \Cref{a2p-interval} using \Cref{a2p-split}.

\begin{proof}[Proof of \Cref{a2p-interval}]

By elementary considerations, we obtain the following estimate for any $\delta > 0$, 
\begin{align*}\label{cor-3-proof}
  & \#\{\fp\in \Sigma_K:  N(\mathfrak{p}) \leq x, \fp \nmid N_A, \overline{A}_{\mathfrak{p}} \text{ splits}, a_{2, \fp}\in I\} \nonumber \\
   &= \sum_{\substack{t\in \Z\\t \in I}}\#\{\fp\in \Sigma_K:  N(\mathfrak{p}) \leq x, \fp \nmid N_A, \overline{A}_{\mathfrak{p}} \text{ splits}, a_{2, \fp}=t\} \nonumber \\
   &  \ll \#\{t\in \Z: t\in I\} \cdot x^{\frac{1}{2}}\\
   &\ll \frac{x}{(\log x)^{1+\delta}}, \nonumber 
\end{align*} 
 where the final bound follows from part (1) of \Cref{a2p-split} by estimating each summand with $g(p) = t$.  
\end{proof}

\begin{proof}[Proof of  \Cref{a2p-split}]

We begin by deriving basic properties of Frobenius polynomials at primes of split reduction for abelian surfaces with real multiplication. Throughout, we adopt the notation introduced in \Cref{S2.3}.

Let  $\fp\in \Sigma_K$ be a degree 1 prime with $N(\fp)=p$ and assume $p\neq 2$. By  \Cref{RM-poly}, we can write $b_{\fp}(A)=\alpha_\fp+\beta_\fp\sqrt{d}$, where $\alpha_\fp, \beta_\fp\in\Z$, and the Frobenius polynomial factors as 
\begin{align*}
P_{A, \fp}(X) & =X^4+a_{1, \fp}X^3+a_{2, \fp}X^2+pa_{1, \fp}X+p^2\\
& =(X^2+b_{\fp}(A) X+p)(X^2+ (\alpha_\fp+\iota (b_{\fp}(A)) X+p)\\
& =(X^2+(\alpha_\fp+\beta_\fp\sqrt{d}) X+p)(X^2+ (\alpha_\fp-\beta_\fp\sqrt{d})X+p).
\end{align*}
It follows directly that  $a_{1, \fp}=2\alpha_\fp$ and $a_{2, \fp}=2p+\alpha_\fp^2-\beta_\fp^2 d$.

Now suppose that $\ol{A}_\fp$ also splits. Then by  \Cref{main-lem-RM}, there exists an integer $b$ with $|b|\leq 2\sqrt{p}$, such that 
\[
a_{1, \fp}= 2b, \; a_{2, \fp}= 2p+b^2.
\]
Comparing these expressions, we conclude that
\[
\alpha_\fp=b, \; \beta_\fp=0, \; \text{ and } P_{A, \fp}(X) = (X^2+\alpha_\fp X+p)^2. 
\]

(1) Let $g(\cdot)$ be an arithmetic function such that the number of primes $p$ satisfying   $g(p)=2p+m^2$ is uniformly bounded independent of the value of  $m\in \Z$, then 
\begin{align*}
&\#\{\fp\in \Sigma_K: N(\fp)\leq x,  \fp \nmid N_A,  \ol{A}_\fp \text{ splits},  a_{2, \fp}=g(p)\}\\
& = \#\{\fp\in \Sigma_K: N(\fp)=p\leq x,  \fp \nmid N_A,  \ol{A}_\fp \text{ splits},  a_{2, \fp}=g(p)\}+\O(x^{\frac{1}{2}})\\
& \leq  \sum_{\substack{m\in \Z\\|m|\leq 2\sqrt{x}}}\#\{\fp\in \Sigma_K: N(\fp)=p\leq x,  \fp \nmid N_A,  \ol{A}_\fp \text{ splits},  \alpha_{\fp}=m, g(p)=2p+\alpha_\fp^2\} +\O(x^{\frac{1}{2}})\\
& \ll  \sum_{\substack{m\in \Z\\|m|\leq 2\sqrt{x}}}\#\{p\leq x,  g(p)=2p+m^2\} + x^{\frac{1}{2}}\\
& \ll x^{\frac{1}{2}}.
\end{align*}

(2)  By \Cref{lem-7-remark} and the assumption that $A/\Q$ has real multiplication, there exists a  newform  $f$ of weight 2 associated to $A$.  By \cite[Theorem 4]{Ro1997}, for any prime $p\nmid N_A$, 
\[
P_{A, p}(X)=(X^2+a_p(f) X+p)(X^2+ \iota(a_p(f))X+p).
\]
hence the middle coefficient $a_{2, p}$ takes the form $2p+N(a_p(f))$. 
Furthermore, if  $\ol{A}_p$ also splits, then $\alpha_p=a_p(f)\in \Z$. Therefore, 
by  taking   $g(p)=2p+m_0^2$, we get 
\begin{align*}
&\#\{p\leq x: p \nmid N_A,  \ol{A}_p \text{ splits},  a_{2, p}=2p+m_0^2\}\\
& \ll \#\{p\leq x:  p \nmid N_f,  a_p(f)=\pm m_0\}+x^{\frac{1}{2}}.
\end{align*}
The desired unconditional bound now follows from \cite[Theorem 1.4]{ThZa2018}.
\end{proof}

\section{Further discussions}
\subsection{Frobenius trace for \texorpdfstring{$\GL_2$}{Lg}-type abelian surfaces}\label{rm-final}

We have seen that in case (\ref{ss-RM}) of  \Cref{main-thm-1}, the abelian surface $A/\Q$ is  is isogenous over $\Q$ to $A_f$, where $f$ is a non-CM newform of weight 2 and level $N$  \cite{Ri2004}. 
By  \Cref{lem-7-remark} and  \Cref{main-thm-1}  (2), we immediately deduce the following estimate for the number of primes 
 \begin{equation}\label{LT-surface}
 \#\{p\leq x: p\nmid N_{A_f}, a_p(A_f)=0\} \ll \frac{x (\log \log x)^{2}}{(\log x)^{2}},
 \end{equation}
 where $a_p(A_f)$ is the Frobenius trace of $A_f$. This provides an unconditional upper bound toward a special case of the Lang–Trotter Conjecture in the setting of abelian surfaces.  Moreover, due to the modularity of $A_f$ and \Cref{lem-7-remark},  we also obtain the following 
 asymptotic upper bound related to  the Fourier coefficient of $f$:

  \[
 \#\{p\leq x: p\nmid N, a_p(f)=0\} \ll \frac{x (\log \log x)^{2}}{(\log x)^{2}}.
  \]
  Note that this is more general than \cite[Theorem 1.4]{ThZa2018} because  $a_p(f)\in K_f$ is not necessarily an integer.   
 
 We also note that a generalization of the Sato–Tate Conjecture for such abelian surfaces is considered in \cite{Go2014}.
 
Finally, we revisit the conditions (\ref{GL2-type}) and (\ref{GL2-type-2}), and offer another  interpretation in terms of endomorphism algebras.    According to \cite[Theorem 1.2, p. 192]{Py2004}, condition (\ref{GL2-type}) holds if and only if  $\End_{\ol{\Q}}(A)\otimes_\Z \Q$ is an indefinite quaternion algebra over $\Q$, i.e., $A$ has quaternion multiplication). On the other hand, condition (\ref{GL2-type-2}) holds if and only if  $\End_{\ol{\Q}}(A)\otimes_\Z \Q=\End_{\Q}(A)\otimes_\Z \Q$ is a real quadratic field, i.e., $A$ has real multiplication. Furthermore, a modular form $f$ corresponds to  case (\ref{GL2-type}) if and only if it admits a nontrivial inner twist (see \cite{DiRo2005}). 

\subsection{CM abelian varieties}\label{cm-final}

Let $A/K$ be an absolutely simple abelian surface such that  $F: = \End_{\ol{K}}(A)\otimes_{\Z}\Q$ is a quartic CM field. Denote by  $N_A$ the conductor of $A$. In this short section, we discuss he distribution of supersingular primes for the abelian surface $A$. 

We begin with a special case: assume
 $\cal{O}_F\subseteq \End_{\ol{K}}(A)$ and  $F(\subseteq K)$ is a quartic primitive CM field. In this setting, Goren \cite[Theorems 1 and 2]{Go1997} classifies the set of primes  $\fp$ of $K$ such that $\fp \nmid N_A$, $\fp$ is unramified in $F$, and    $\ol{A}_\fp$ is supersingular.  For example, one could obtain that if $K=F$ is a cyclic quartic extension of $\Q$, then the density of rational primes lying below a supersingular prime of  $A/K$ is $\frac{3}{4}$.  In contrast, when  $F$ is a non-cyclic quartic CM field, then the density is at least $\frac{3}{8}$ (see \cite[Theorem 2, case 7]{Go1997}). The density will depend on the decomposition behavior of rational primes in  $F$, its Galois closure $L$ (with $\Gal(L/\Q)\simeq D_4$), and the reflex field of $F$. See also \cite[Example after Theorem 3.1, p. 1260]{Bl2014}.   These results are also compatible with the result that the density of ordinary primes for $A$ is either $\frac{1}{4}$, $\frac{1}{2}$, or 1   \cite[Theorem 2.3, p. 567]{Sa2016}.

More generally, suppose that $A/K$ is an abelian variety of CM type $(F, \Phi)$ (e.g., see \cite[p. 13]{La1983} for the  definition). In this case, Shimura–Taniyama theory \cite{ShTa1961} offers a powerful tool to characterize the supersingular primes of  $A$.   Let $\fp\nmid N_A$ be an unramified prime in $K/\Q$. For any prime $w$ of $F$ such that $w \mid N(\fp)$, we consider the completion  $F_w$ of $F$ at $w$, the image  $H_w$ of $\Hom_{\Q_p}(F_w, \C)$ in $\Hom_\Q(F, \C)$ (under the field isomorphism $\ol{\Q}_p\simeq \C$), and  $\Phi_w = H_w\cap \Phi$. If $\fp$ is a supersingular prime of $A$, then the Shimura-Taniyama formula implies
$
\frac{|\Phi_w|}{|H_w|}=\frac{1}{2}.
$
This relation highlights that computing the density of supersingular primes  for $A$ over $K$ requires detailed knowledge of the arithmetic interaction between $F$ and $K$.   


\subsection{Nonsimple abelian surfaces}
In this section, we examine the distribution of supersingular primes of the (geometrically) non-simple abelian surface $A$ Let  $A/\Q$  be an abelian surface that is $\Q$-isogenous to a product of two non-CM elliptic curves $E/\Q$ and $E'/\Q$. 

By definition, for primes $p\nmid N_A$, the reduction $\ol{A}_p$ is supersingular if and only if both $\ol{E}_p$ and $\ol{E'}_p$ are supersingular.  
If $E$ and $E'$ are $\ol{\Q}$-isogenous, then 
\[
\pi_{A, ss}(x) = \pi_{E, ss}(x)+\O(1) = \pi_{E', ss}(x)+\O(1).
\]
This case has been discussed previously in the introduction. If $E$ and $E'$ are not $\ol{\Q}$-isogenous, the distribution of supersingular primes becomes more complicated. In this setting, Fouvry and Murty \cite{FoMu1995} studied the joint behavior of supersingular reductions and predicted that, for sufficiently large  $x$,
\[
\pi_{A, ss}(x)\sim C_{E, E'} \log\log x,
\]
where  $C
_{E, E'}$ is a constant that only depends on $E$ and $E'$. They also established that this asymptotic holds on average over certain families.

\bibliographystyle{alpha}
\bibliography{References}

\end{document}